\newcommand{\PP}{\mathbb{P}}
\newcommand{\ZZ}{\mathbb{Z}}
\newcommand{\OO}{\mathcal{O}}
\newcommand{\LL}{\mathcal{L}}
\newcommand{\MM}{\mathcal{M}}
\newcommand{\NN}{\mathcal{N}}
\newcommand{\EE}{\mathcal{E}}
\newcommand{\FF}{\mathcal{F}}
\newcommand{\GG}{\mathcal{G}}
\newcommand{\JJ}{\mathcal{J}}
\newcommand{\lra}{\longrightarrow}
\newcommand{\ra}{\rightarrow}
\newcommand{\al}{\alpha}
\newcommand{\Hom}{\textrm{Hom}}
\renewcommand{\ker}{\operatorname{Ker}\,}
\newcommand{\srel}{\stackrel}
\renewcommand{\deg}{\operatorname{deg}}
\newcommand{\rg}{\operatorname{rk}}
\newcommand{\depth}{\operatorname{depth}}
\newcommand{\End}{\operatorname{End}}
\newcommand{\Ext}{\operatorname{Ext}}
\newcommand{\length}{\operatorname{length}}
\newcommand{\Pic}{\operatorname{Pic}}
\newcommand{\im}{\operatorname{Im}}
\theoremstyle{definition}
\newtheorem{defi}{Definition}[section]
\theoremstyle{plain}
\newtheorem{lema}[defi]{Lemma}
\newtheorem{teo}[defi]{Theorem}
\newtheorem{prop}[defi]{Proposition}
\newtheorem{cor}[defi]{Corollary}
\newtheorem{rk}[defi]{Remark}
\newtheorem{ex}[defi]{Example}
\newtheorem{teo-def}[defi]{Theorem/Definition}
\newtheorem{theorem}{Theorem}[section]
\newtheorem{proposition}[theorem]{Proposition}
\theoremstyle{remark}
\newtheorem{remark}[theorem]{Remark}
\title[]{Stable Ulrich bundles}
\author[]{Marta Casanellas}
\address{Departament de Matemàtica Aplicada I. ETSEIB. Universitat Polit\`ecnica de Catalunya. Avinguda Diagonal 647. 08028 Barcelona. Spain.}
\thanks{Research of the first author partially supported by Ministerio de Educaci\'on y Ciencia, MTM2009-14163-
C02-02, and Generalitat de Catalunya, 2009 SGR 1284.}
\email{marta.casanellas@upc.edu}
\author[]{Robin Hartshorne}
\address{Department of Mathematics. Evans Hall. University of California. Berkeley, CA, 94720-3840. USA}
\email{robin@math.berkeley.edu}
\begin{document}

\maketitle
\begin{center}
With an Appendix by Florian Gei{\ss}   and
Frank-Olaf Schreyer
\end{center}

\begin{abstract}
The existence of stable ACM vector bundles of high rank on
algebraic varieties is a challenging problem. In this paper, we
study stable Ulrich bundles (that is, stable ACM bundles whose
corresponding module has the maximum number of generators) on
nonsingular cubic surfaces $X \subset \mathbb{P}^3.$ We give
necessary and sufficient conditions on the first Chern class $D$
for the existence of stable Ulrich bundles on $X$ of rank $r$ and
$c_1=D$. When such bundles exist, we prove that that the
corresponding moduli space of stable bundles is smooth and
irreducible of dimension $D^2-2r^2+1$ and consists entirely of
stable Ulrich bundles (see Theorem 1.1). As a consequence, we are
also able to prove the existence of stable Ulrich bundles of any
rank on nonsingular cubic threefolds in $\mathbb{P}^4$, and we
show that the restriction map from bundles on the threefold to
bundles on the surface is generically étale and dominant.
\end{abstract}

\maketitle

\section{Introduction}
The study of moduli spaces of stable vector bundles of given rank
and Chern classes on algebraic varieties is a very active topic in
algebraic geometry. See for example the book \cite{HL}. In recent
years attention has focused on ACM bundles, that is vector bundles
without intermediate cohomology. Recently ACM bundles on
hypersurfaces have been used to provide counterexamples to a
conjecture of Griffiths and Harris about whether subvarieties of
codimension two of a hypersurface can be obtained by intersecting
with a subvariety of codimension two of the ambient space
\cite{KRR2}. There have been numerous studies of rank 2 ACM
bundles on surfaces and threefolds (see \cite{Beauvillecubic},
\cite{AM}, \cite{ChiantiniMadonna2}, \cite{Madonna2000},
\cite{ChiantiniFaenzi}, \cite{BrambillaFaenzi}, and the references
in those papers), and a few studies of ACM bundles of higher rank
\cite{ArrondoGrana}, \cite{AM}, \cite{Madonna2005}. There have
also been a few examples of indecomposable ACM bundles of
arbitrarily high rank \cite{MiroRoigPons},
\cite{PonsLlopisTonini}. But as far as we can tell, examples of
stable ACM bundles of higher ranks are essentially unknown.

In our earlier paper \cite{CH2} we began such a study by
constructing stable ACM bundles of all ranks  $r$ on a nonsingular
cubic surface, with first Chern class $rH$. The bundles we
constructed are of a particular kind,  the so-called
\textit{Ulrich bundles}: that is ACM bundles whose corresponding
module has the maximum number of generators. We refer to the
introduction of our earlier paper \cite{CH2} for the history and
motivation for considering these concepts and the corresponding
notions of maximal Cohen-Macaulay modules and Ulrich modules in
local algebra.

In this paper we continue that study by determining for which
divisor classes on the cubic surface there are Ulrich bundles or
stable Ulrich bundles. The following theorem summarizes our main
results on cubic surfaces.

\begin{teo}{\rm (Theorem \ref{teo_stable})}
Let $D$ be a divisor on a nonsingular cubic surface $X \subset
\PP^3,$ and let $r\geq 2$ be an integer. Then there exist stable
Ulrich bundles $\EE$ of rank $r$ with $c_1(\EE)=D$ if and only if
$0\leq D.L \leq 2r$ for all lines $L$ on $X$, and $D.T \geq 2r$
for all twisted cubic curves $T$ on $X$, with one exception.

Moreover, if $D$ satisfies the conditions above, the moduli space
$\MM_X^s(r;c_1,c_2)$ of stable vector bundles on $X$ of rank $r$,
$c_1=D$ and $c_2=\frac{D^2-r}{2}$, is smooth and irreducible of
dimension $D^2-2r^2+1$ and consists entirely of stable Ulrich
vector bundles.
\end{teo}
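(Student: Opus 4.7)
The plan is to treat the theorem in three stages: necessity of the numerical conditions on $D$, existence of stable Ulrich bundles realizing each admissible $D$, and the geometry of the moduli space together with the fact that it consists entirely of Ulrich bundles.

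For necessity, I would restrict a stable Ulrich bundle $\EE$ of rank $r$ to lines and twisted cubics on $X$. On any line $L\subset X$ the restriction $\EE|_L$ splits as $\bigoplus_{i=1}^{r}\OO_L(a_i)$ with $\sum a_i=D.L$, and the Ulrich vanishings $H^i(\EE(-1))=H^i(\EE(-2))=0$ combined with the ACM property on $X$ force each $a_i$ to lie in $\{0,1\}$, yielding $0\le D.L\le 2r$. The same type of restriction analysis on a twisted cubic $T\subset X$, together with the normalization $D.H=3r$ satisfied by every rank $r$ Ulrich bundle on the cubic, produces $D.T\ge 2r$; the single exception in the statement should emerge as the one numerical class on the boundary of the admissible cone where the forced restriction pattern is incompatible with simplicity of $\EE$.

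For existence I plan to argue inductively on the rank $r$, using the rank $2$ classification and the rank $r$ bundles with $c_1=rH$ from our earlier paper \cite{CH2} as base cases. Given an admissible divisor $D$, the strategy is to split it as $D=D'+D''$ with each of $D'$, $D''$ admissible for smaller ranks $r'$, $r''=r-r'$, use the inductive hypothesis to produce stable Ulrich bundles $\EE'$, $\EE''$ with those invariants, and realize $\EE$ as a general extension
\[
0 \to \EE' \to \EE \to \EE'' \to 0.
\]
Ulrich-ness of the middle term is immediate from the Ulrich vanishings of $\EE'$ and $\EE''$; stability of a general such extension follows from a standard semicontinuity argument once $\Ext^1(\EE'',\EE')$ is known to be nonzero of sufficient dimension (computable via Riemann-Roch for $\HHom(\EE'',\EE')$). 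The hardest part will be an explicit combinatorial check in the Picard lattice that such an admissible splitting exists for every non-exceptional $D$, especially near the boundary where the line or twisted cubic inequalities become tight; the lone exception should arise precisely at a $D$ where every admissible splitting forces a destabilizing subsheaf in the extension.

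For the moduli space, I would compute $\Ext^i(\EE,\EE)$ directly for a stable Ulrich $\EE$. Stability forces $\Hom(\EE,\EE)=k$; Serre duality on $X$ with $K_X=-H$ identifies $\Ext^2(\EE,\EE)\cong\Hom(\EE,\EE(-1))^\vee$, which vanishes since $\EE$ and $\EE(-1)$ are stable of the same rank but distinct slopes $3$ and $0$. Riemann-Roch with $c_1(\EE)=D$, $c_2(\EE)=(D^2-r)/2$ then yields $\chi(\End\EE)=2r^2-D^2$, so $\dim_k \Ext^1(\EE,\EE)=D^2-2r^2+1$; this gives smoothness of $\MM_X^s(r;c_1,c_2)$ of the claimed dimension at every stable Ulrich point. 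To see that \emph{every} stable bundle with these Chern classes is Ulrich, I would observe that the Chern data alone force $\chi(\EE(-j))=0$ for $j=1,2$, while stability and slope arguments ($\mu(\EE)=3$, $\mu(\EE(-1))=0$, $\mu(\EE(-2))=-3$) independently rule out nonzero $H^0$ and $H^2$ of these twists, yielding all Ulrich vanishings. Irreducibility of $\MM_X^s(r;c_1,c_2)$ then follows from the inductive extension construction, which produces an irreducible family meeting every component of the smooth moduli space.
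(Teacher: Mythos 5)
Your proposal contains two genuine gaps, and they are precisely the points on which the paper's proof turns. First, the existence argument: a general extension $0 \to \EE' \to \EE \to \EE'' \to 0$ of stable Ulrich bundles is \emph{never} stable, no matter how large $\Ext^1(\EE'',\EE')$ is. By Lemma \ref{lemaHilbpoly} every Ulrich bundle of rank $s$ on $X$ has reduced Hilbert polynomial $3\binom{n+2}{2}$, so $\EE'$ is a subsheaf of $\EE$ with $P_{\EE'}/\rg\EE' = P_{\EE}/\rg\EE$, and $\EE$ is properly semistable. There is no semicontinuity argument that makes a general member of $\Ext^1(\EE'',\EE')$ stable. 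What the extension construction actually buys is \emph{simplicity} (Lemma \ref{lemsimple}); the paper then places these simple bundles inside a modular family of simple Ulrich bundles, computes its dimension as $D^2-2r^2+1$ via $\chi(\EE\otimes\EE^{\vee})$, shows that the locus of non-stable simple bundles (all of which arise as such extensions, since no other splitting type can specialize to this one) has dimension $D^2-2r^2-a+2r < D^2-2r^2+1$ when $a=\min\{D.T\}\geq 2r$, and concludes that the general simple bundle is stable. The deformation away from the extension locus is the whole content of the existence proof, and your outline omits it. (This is exactly the issue with \cite[5.3]{CH2} that motivated the present paper.)

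Second, the necessity of $D.T \geq 2r$ cannot be obtained by restricting $\EE$ to twisted cubics. Restriction arguments (via the linear resolution $0\to\OO_{\PP^3}(-1)^{3r}\to\OO_{\PP^3}^{3r}\to\EE\to 0$) give $0\leq D.L\leq 2r$ on lines and only $0\leq D.T\leq 6r$ on twisted cubics; indeed $D.T\geq 2r$ fails for non-stable Ulrich bundles such as $\OO(T_A)^{\oplus 2}$, where $D.T_A=2<4$, so no pointwise cohomological constraint on $\EE|_T$ can force it. The paper proves this implication globally: if $a<2r$ (or $D=mD_0$ with $D_0=(4;2,1^4,0)$, the exception your proposal does not identify), one exhibits a family of polystable Ulrich bundles of dimension $\geq D^2-2r^2+1$; since the semistable Ulrich moduli space is irreducible (Corollary \ref{cor_irred}, via the presentation $0\to\OO_X^{r-1}\to\EE\to\JJ_Z(D)\to 0$ and Fogarty's theorem), a stable bundle would force the polystable locus to have dimension $\leq D^2-2r^2$, a contradiction. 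Your treatment of the moduli space (the $\Ext$ computations, and the lemma that Chern classes plus stability imply Ulrich) is essentially correct and matches the paper, but the irreducibility you invoke at the end is itself the Corollary \ref{cor_irred} statement and must be proved from the $\JJ_Z(D)$ presentation, not from the extension construction.
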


In particular, if $D=rH$, where $H$ is the hyperplane class, this
gives a new proof of the main theorem (5.7) of \cite{CH2}.

Using this result we are also able to prove the existence of
stable Ulrich bundles of any rank on any nonsingular cubic
threefold $Y\subset \PP^4$.

\begin{teo}{\rm (see Theorem \ref{teo_3fold} and Corollary \ref{finalCorollary})} For any $r\geq 2$, the moduli space of stable rank $r$ Ulrich bundles on a general cubic threefold
$Y$ in $\PP^4$ is non-empty and smooth of dimension $r^2+1.$
Furthermore, it has an open subset for which restriction to a
hyperplane section gives an étale dominant map to the moduli of
stable bundles on the cubic surface.
\end{teo}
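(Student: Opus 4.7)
The plan is to reduce to the cubic surface case via Theorem~\ref{teo_stable} and restriction to a smooth hyperplane section. Let $X = Y \cap H$ be a smooth cubic surface hyperplane section of a general smooth cubic threefold $Y \subset \PP^4$. A rank-$r$ Ulrich bundle $\EE$ on $Y$ has $c_1(\EE) = rH$, so its restriction $\EE|_X$ is Ulrich on $X$ with $c_1 = rH|_X$. The divisor $D = rH|_X$ satisfies the numerical conditions of Theorem~\ref{teo_stable}: for every line $L \subset X$ one has $D \cdot L = r \leq 2r$, and for every twisted cubic $T \subset X$ one has $D \cdot T = 3r \geq 2r$. Hence the moduli space $\MM_X$ of stable Ulrich bundles on $X$ of this class is smooth, irreducible, and of dimension $(rH|_X)^2 - 2r^2 + 1 = r^2 + 1$.

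For the existence on $Y$, I would show that a generic $\FF \in \MM_X$ lifts. The obstruction to formally lifting $\FF$ across successive thickenings of $X$ in $Y$ lies in $\Ext^2_X(\FF, \FF)$; by Serre duality on $X$ with $\omega_X = \OO_X(-1)$,
\[
\Ext^2_X(\FF, \FF) \cong \Hom_X(\FF, \FF(-1))^*,
\]
which vanishes by stability since $\mu(\FF(-1)) < \mu(\FF)$. Algebraizing the formal lift produces a coherent $\EE$ on $Y$ with $\EE|_X \cong \FF$; the exact sequence $0 \to \EE(-H) \to \EE \to \FF \to 0$ together with the Ulrich vanishings on $\FF$ then inductively propagate the Ulrich vanishings to $\EE$, forcing $\EE$ to be a locally free Ulrich sheaf. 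Stability of $\EE$ follows from that of $\FF$ by the standard argument that a destabilizing subsheaf of $\EE$ would restrict to a destabilizing subsheaf of $\FF$ for a general hyperplane section.

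Smoothness and dimension of $\MM_Y$ at $[\EE]$ are purely cohomological: Serre duality on $Y$ (using $\omega_Y = \OO_Y(-2)$) gives $\Ext^2_Y(\EE, \EE) \cong \Hom_Y(\EE, \EE(-2))^* = 0$ by stability, and $\dim \Ext^1_Y(\EE, \EE) = r^2 + 1$ follows from Riemann--Roch combined with the Ulrich vanishings. The differential of the restriction morphism $\rho : \MM_Y \to \MM_X$ fits into the long exact Ext sequence associated to $0 \to \EE \otimes \EE^\vee(-H) \to \EE \otimes \EE^\vee \to (\EE \otimes \EE^\vee)|_X \to 0$; its kernel and cokernel are controlled by $H^1$ and $H^2$ of $\EE \otimes \EE^\vee(-H)$. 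Both vanish by a combination of Serre duality on $Y$ and the Ulrich cohomology vanishings for $\EE$, so $d\rho$ is an isomorphism. Since $\dim \MM_Y = \dim \MM_X = r^2 + 1$, \'etaleness at $[\EE]$ propagates to an open neighborhood and, together with nonemptiness of the image, forces dominance on a component.

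\textbf{Main obstacle.} The critical step is the lifting argument. Formal unobstructedness via $\Ext^2_X(\FF, \FF) = 0$ is easy, but producing a genuine algebraic $\EE$ on $Y$ and verifying that it is locally free of the correct rank requires the projectivity of $Y$ together with the openness of the Ulrich condition and a careful use of the Hilbert polynomial on extensions. A second delicate point is the cohomology vanishing for $\EE \otimes \EE^\vee(-H)$ needed for \'etaleness: $\EE \otimes \EE^\vee$ is not itself Ulrich, so one must bootstrap from the Ulrich vanishings of $\EE$ and apply Serre duality on $Y$ carefully.
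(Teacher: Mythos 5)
There is a genuine gap, and it sits at the heart of your argument. You propose to produce bundles on $Y$ by lifting a stable Ulrich bundle $\FF$ from the hyperplane section $X$, claiming the obstruction lies in $\Ext^2_X(\FF,\FF)\cong\Hom_X(\FF,\FF(-1))^\ast=0$. But that group controls deformations of $\FF$ \emph{on $X$}; it is not the obstruction to extending $\FF$ across the infinitesimal neighborhoods of $X$ in $Y$. The obstruction to lifting from the $(n-1)$-st to the $n$-th neighborhood lies in $H^2\bigl(X,\,\FF\otimes\FF^{\vee}\otimes\II^n/\II^{n+1}\bigr)=H^2(X,\FF\otimes\FF^{\vee}(-n))$, which by Serre duality on $X$ (with $\omega_X=\OO_X(-1)$) is $H^0(\FF\otimes\FF^{\vee}(n-1))^\ast$. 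This is nonzero for every $n\geq 1$ (already for $n=1$ it is $k$, since $\FF$ is simple), so the lifting is genuinely obstructed and your unobstructedness claim fails. Relatedly, your assertion that $H^1$ and $H^2$ of $\EE\otimes\EE^{\vee}(-1)$ ``vanish by a combination of Serre duality on $Y$ and the Ulrich cohomology vanishings'' is not correct: Serre duality on $Y$ only shows these two groups are dual to each other (consistent with $\chi=0$), and their common dimension equals $h^0(\NN_{C/Y}(-1))$ for the curve $C$ associated to $\EE$ by the Serre correspondence (Lemma \ref{lem_normalbdl}). That this vanishes for suitable $C$ is precisely the nontrivial content of the Appendix, established by explicit \texttt{Macaulay2} computation over a finite field plus semicontinuity; it does not follow formally from the Ulrich property.

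The paper goes in the opposite direction: existence on $Y$ is established first and independently of the surface, via the Serre construction from a quintic elliptic curve (rank $2$) and from an ACM curve of degree $12$ and genus $10$ (rank $3$, requiring the Appendix to produce a curve with $\omega_C(-1)$ generating $H^0_{\ast}(\omega_C)$), followed by induction on $r$ using extensions of lower-rank stable bundles, a modular family of simple bundles, and a dimension count showing the non-stable locus is proper (Theorem \ref{teo_3fold}). Only afterwards is the restriction map to $X$ shown to be \'etale and dominant, on the open locus where $\EE\otimes\EE^{\vee}(-1)$ has no cohomology (Propositions \ref{prop_etale_3fold} and \ref{prop_nocohomol_3fold}). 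The dominance of restriction does say that the generic stable Ulrich bundle on $X$ with $c_1=rH$ lifts to $Y$, but this is a conclusion of the theorem, not an available starting point; your proposal assumes the hardest part of the statement.
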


The motivation for writing this paper was to clarify the proof of
stability for orientable (i.e. $c_1 = rH$)  Ulrich bundles given
in our previous paper \cite[5.3]{CH2}. We thank R. M. Mir\'o-Roig
for pointing out that the proof of \cite[5.3]{CH2} was not clear
enough.

In section 2 we prove some generalities about Ulrich bundles. The
most important result in this section shows that Ulrich bundles of
any rank on nonsingular projective varieties of any dimension are
semistable (cf. Theorem \ref{teo_semist}). In this section we also
discuss modular families of simple Ulrich bundles, which will be
crucial to show the existence of stable bundles. Indeed, our
technique for proving Theorem 1.1 is first to show the existence
of simple Ulrich bundles. For these, we can compute the dimension
of modular families by deformation theory. Then we show that the
non-stable simple bundles form a family of smaller dimension, so
the general bundle of that modular family must be stable.

In section 3 we study Ulrich bundles on a nonsingular cubic
surface $X$. We first prove that, if they exist, they form an
irreducible family. Then in Theorem \ref{condD} we give necessary
and sufficient conditions on the first Chern class D for the
existence of some Ulrich bundle of rank $r$ with this first Chern
class. This requires a careful analysis of the intersection
properties of $D$ with lines and twisted cubic curves on the
surface.

In section 4 we prove our main theorem giving necessary and
sufficient conditions on $c_1(\EE)$ for the existence of a
\textit{stable} Ulrich bundle $\EE$ (see Theorem
\ref{teo_stable}).  As a consequence we recover Faenzi's results
\cite{Faenzi} on stable rank 2 Ulrich bundles. As an illustration
of our results we give the classification of Ulrich bundles of
rank 3 on $X$.

In the last section we consider nonsingular cubic 3-folds and
prove the existence of stable Ulrich bundles of all ranks
(Theorem \ref{teo_3fold}). We would like to thank Florian Gei{\ss} and Frank-Olaf Schreyer
for computations in \texttt{Macaulay2} that provide essential ingredients for extending
our results from the cubic surface to cubic threefolds (see Appendix).

We expect that our results will generalize naturally using the
same ideas to other del Pezzo surfaces and Fano threefolds. We
have restricted our attention to the cubic surface and the cubic
threefold for simplicity. What might be more interesting would be
to explore the existence of higher rank stable Ulrich bundles on
surfaces in $\PP^3$  and threefolds in $\PP^4$  of higher degree.
Here is what is known about the existence of Ulrich bundles of
rank $r \geq 2$ on a general hypersurface $X$ of degree $d \geq 3$
and dimension $N \geq 2$ in $\PP^{N+1}$. It follows from the work
of Beauville \cite{Beauville} that rank 2 Ulrich bundles exist for
$N=2$ if and only if $d \leq 15$; and for $N = 3$ if and only if
$d \leq 5.$ For $N\geq 4$ and $d \geq 3$ there are no rank 2
Ulrich bundles \cite{ChiantiniMadonna2}. On the other hand, a
theorem of Herzog, Ulrich and Backelin \cite{HerzogUlrichBackelin}
shows that every hypersuface admits an Ulrich bundle of some high
rank. A simple calculation with Chern classes (see Remark
\ref{Rmk_even}) shows that $r(d-1)$ must be even for the existence
of an Ulrich bundle, so the first open cases would seem to be
$N=2$, $r=3,$ $d=5;$ and $N=3, r=4, d=6$. See also \cite{KRR},
\cite{Madonna2000}, \cite{Madonna2005}, \cite{AM},
\cite{BrambillaFaenzi}, \cite{ChiantiniFaenzi} for further details
on Ulrich bundles and more general ACM bundles.

Throughout this paper we work over an algebraically closed field
$k$ of arbitrary characteristic.

\section{Generalities on Ulrich sheaves on projective varieties}

In this section we review the definition and cohomological
properties of Ulrich bundles. We show that Ulrich bundles on any
nonsingular projective variety are semistable, and if stable they
are also $\mu$-stable. We discuss the two kinds of moduli spaces
we will use in the paper. And for future reference we compute
$\chi(\EE \otimes \FF^{\vee})$ for Ulrich bundles $\EE, \FF$ on an
algebraic surface.

Let $X$ be an integral projective variety with a fixed very ample
invertible sheaf $\OO_X(1)$, over an algebraically closed field
$k$ of arbitrary characteristic. Let $d$ be the degree of $X$ in
the embedding defined by $\OO_X(1)$. A coherent sheaf $\EE$ on $X$
is \textit{arithmetically Cohen-Macaulay} (briefly \textit{ACM})
if $\EE$ is locally Cohen-Macaulay, and
$H^i_{\ast}(\EE):=\oplus_{t\in \ZZ}H^i(\EE(t))=0$ for $0<i<dim X$.
If $\EE$ is an ACM sheaf of rank $r$, one knows that the number
$m(\EE)$ of generators of the graded module $H^0_{\ast}(\EE)$ is
$\leq dr$ \cite[3.1]{CH2}. In our previous paper \cite{CH2} we
defined $\EE$ to be an Ulrich sheaf if the number of generators of
$H^0_{\ast}(\EE)$ achieved this maximum. To simplify terminology,
in this paper we give a  more restrictive definition:

\begin{defi}
Let $X, \OO_X(1)$ be an algebraic variety of degree $d$. We say an
ACM sheaf $\EE$ on $X$ of rank $r$ is an \textit{Ulrich sheaf} if
the module $H^0_{\ast}(\EE)$ has the maximum number of generators
$dr$, and the generators are all in degree 0. (Thus an Ulrich
sheaf is what we called a ``normalized Ulrich sheaf generated by
global sections'' in \cite{CH2}.) If $X$ is a nonsingular variety
then an Ulrich sheaf is always locally free, so we may call it a
vector bundle.
\end{defi}

\begin{lema}\label{lemaH0}
If $\EE$ is an ACM sheaf on an integral projective variety $X$ of
degree $d$, with $h^0(\EE(-1))=0$ and $h^0(\EE)=dr$, then $\EE$ is
Ulrich.
\end{lema}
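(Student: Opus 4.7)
The plan is to compute the minimal number $m(\EE)$ of homogeneous generators of the graded $S$-module $M := H^0_{\ast}(\EE)$, where $S = k[x_0,\dots,x_N]$ is the homogeneous coordinate ring of the ambient $\PP^N$ and $\M = (x_0,\dots,x_N) \subset S$. By graded Nakayama, $m(\EE)$ equals the total $k$-dimension of $M/\M M$, and its grading records in which degrees the generators of $M$ sit. The strategy is to produce a lower bound $m(\EE) \geq dr$ from the two hypotheses and then combine it with the universal upper bound $m(\EE) \leq dr$ for ACM sheaves of rank $r$ recalled from \cite[Theorem 3.1]{CH2}.

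For the lower bound I would look at the degree-$0$ component of $M/\M M$, which is the cokernel of the multiplication map
$$H^0(\OO_X(1)) \otimes H^0(\EE(-1)) \lra H^0(\EE).$$
The hypothesis $h^0(\EE(-1)) = 0$ forces this map to be zero, so its cokernel is $H^0(\EE)$ itself, of dimension $dr$ by the second hypothesis. Thus the degree-$0$ contribution alone already gives $m(\EE) \geq dr$.

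Combined with $m(\EE) \leq dr$, equality must hold and, moreover, $M/\M M$ must be concentrated entirely in degree $0$ (any nonzero contribution from another degree would push $m(\EE)$ above $dr$ and contradict the universal bound). In particular, all $dr$ minimal homogeneous generators of $M$ live in degree $0$, which is exactly the definition of an Ulrich sheaf. The argument is essentially formal once the universal upper bound from \cite{CH2} is taken as given; the only genuine observation is that the vanishing $h^0(\EE(-1))=0$ trivializes the multiplication map into $H^0(\EE)$, which is what makes the lower bound clean. I do not anticipate any real obstacle beyond bookkeeping.
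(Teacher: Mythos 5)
Your argument is correct and is essentially the paper's own proof: both rest on the upper bound $m(\EE)\leq dr$ from \cite[3.1]{CH2} together with the observation that $h^0(\EE(-1))=0$ kills the multiplication map into $H^0(\EE)$, so that all of $H^0(\EE)$ consists of minimal generators in degree $0$, forcing $m(\EE)=dr$ with no generators in any other degree. The paper states this more tersely as ``$h^0(\EE)\leq m(\EE)\leq dr$,'' but the content is identical.
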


\begin{proof}
$h^0(\EE(-1))=0$ implies that $h^0(\EE)\leq m(\EE)$ which  is
$\leq dr$ by \cite[3.1]{CH2}. Then equality $h^0(\EE)=dr$ implies
$m(\EE)=dr$ and the generators are all in degree zero, so $\EE$ is
Ulrich.
\end{proof}

\begin{prop}\label{prop1curves}
Let $X,\OO_X(1)$ be a nonsingular projective curve of degree $d$
and genus $g$, and let $\EE$ be a rank $r$ locally free sheaf on
$X$. Assume $h^0(\EE(-1))=0$. Then
\begin{itemize}
\item [(a)] $h^0(\EE) \leq dr$
\item [(b)] $\deg \EE \leq r(d+g-1)$
\item [(c)] $\chi(\EE(n))\leq dr(n+1)$ for all $n \in \ZZ$.
\end{itemize}
Furthermore, equality in any one of {\rm (a), (b), (c)} implies
equality in the other two, and is equivalent to $\EE$ being an
Ulrich sheaf.
\end{prop}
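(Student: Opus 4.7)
The plan is to prove the three inequalities by direct cohomological computation, then to establish the equivalence of the three equality cases with the Ulrich condition. The main tool is the short exact sequence
\[
0 \to \EE(-1) \to \EE \to \EE|_H \to 0,
\]
where $H$ is a general hyperplane section, consisting of $d$ reduced points; thus $\EE|_H$ is a skyscraper of length $dr$ with $h^0(\EE|_H)=dr$ and $h^1(\EE|_H)=0$. Combined with the hypothesis $h^0(\EE(-1))=0$, the associated long exact cohomology sequence reads
\[
0 \to H^0(\EE) \to H^0(\EE|_H) \to H^1(\EE(-1)) \to H^1(\EE) \to 0,
\]
from which (a) is immediate. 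For (b), Riemann--Roch applied to $\EE(-1)$ gives $h^1(\EE(-1))=-\chi(\EE(-1))=r(d+g-1)-\deg\EE\geq 0$; feeding this bound into Riemann--Roch for $\EE(n)$ yields (c).

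For the equivalences, (b) and (c) are both equivalent to $\deg\EE=r(d+g-1)$, hence to each other. The implication (b) $\Rightarrow$ (a) is immediate from the displayed long exact sequence: if $h^1(\EE(-1))=0$ then $H^0(\EE)\to H^0(\EE|_H)$ is an isomorphism, so $h^0(\EE)=dr$. Moreover, once (a) holds, Lemma~\ref{lemaH0} gives that $\EE$ is Ulrich.

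The step I anticipate being the main obstacle is the closing implication Ulrich $\Rightarrow$ (b), because the displayed sequence alone shows only that $h^1(\EE(-1))=h^1(\EE)$ under (a) equality, not that either vanishes. To overcome this I would pass to a finite flat degree-$d$ map $\pi\colon X \to \PP^1$, obtained by choosing two general sections of $\OO_X(1)$ with empty common vanishing so that $\pi^*\OO_{\PP^1}(1)\cong\OO_X(1)$; then $\pi_*\EE$ is a rank-$dr$ vector bundle on $\PP^1$, and the projection formula identifies $H^i(\EE(n))$ with $H^i(\pi_*\EE\otimes\OO_{\PP^1}(n))$ for all $i,n$. By Grothendieck's splitting theorem, $\pi_*\EE \cong \bigoplus_{i=1}^{dr}\OO_{\PP^1}(a_i)$. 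The vanishing $h^0(\EE(-1))=0$ forces $a_i\leq 0$ for all $i$, and $h^0(\EE)=dr$ then forces every $a_i=0$. Hence $\pi_*\EE\cong\OO_{\PP^1}^{dr}$, and so $h^1(\EE(-1))=h^1(\OO_{\PP^1}(-1))^{dr}=0$, which is (b) and closes the circle.
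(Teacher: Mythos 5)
Your proof is correct, and for the one genuinely delicate implication it takes a different route from the paper. The inequalities (a)--(c) and the implications (b) $\Leftrightarrow$ (c), (b) $\Rightarrow$ (a), and (a) $\Leftrightarrow$ Ulrich are handled essentially as in the paper (restriction to a general hyperplane section consisting of $d$ points, Riemann--Roch applied to $\EE(-1)$, and Lemma \ref{lemaH0}). Where you diverge is in closing the circle with (a) $\Rightarrow$ (b): the paper stays on $X$, observing that equality in (a) makes $H^1(\EE(n-1))\to H^1(\EE(n))$ injective for all $n\geq 0$ and then invoking Serre vanishing to force $H^1(\EE(-1))=0$; you instead push forward along a finite flat degree-$d$ map $\pi\colon X\to\PP^1$ given by a general basepoint-free pencil in $|\OO_X(1)|$, apply Grothendieck's splitting theorem to the rank-$dr$ bundle $\pi_*\EE$, and read off from $h^0(\EE(-1))=0$ and $h^0(\EE)=dr$ that all splitting degrees vanish, so $\pi_*\EE\cong\OO_{\PP^1}^{dr}$ and $h^1(\EE(-1))=0$. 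Both arguments are complete; yours is essentially the Eisenbud--Schreyer characterization of Ulrich sheaves (trivial pushforward under a finite linear projection), which the paper cites as a source for this proposition but deliberately replaces by a self-contained cohomological cascade. The pushforward argument buys a more structural conclusion --- the vanishing of $H^1(\EE(n))$ for all $n\geq -1$ drops out at once --- at the cost of the standard facts, which you should state, that $\pi$ is finite and flat of degree $d$ and that $\pi_*\EE$ is therefore locally free of rank $dr$; both are immediate since $X$ is a nonsingular curve.
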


\begin{proof} This result follows from \cite[section 4]{EisSchreyer}
but for convenience we include a self-contained proof. First of
all (a) follows from \cite[Theorem 3.1]{CH2}, since any locally
free sheaf on a curve is ACM. Applying the Riemann-Roch theorem to
$\EE(-1)$ and using the hypothesis $h^0(\EE(-1))=0$, we find
$$\chi(\EE(-1))=\deg \EE -rd+r(1-g) \leq 0,$$
which gives (b). Then Riemann-Roch for $\EE(n)$ says
$$\chi(\EE(n))=\deg \EE+nrd+r(1-g),$$
and substituting (b) gives (c).

Note that equality in (b) is equivalent to equality in (c).
Equality in (b) implies $\chi(\EE(-1))=0$, so $H^1(\EE(-1))=0$.
Hence from the sequence $0 \lra \EE(-1) \lra \EE \lra \EE_H\lra 0$
where $H$ is a general hyperplane section, consisting of $d$
points, we see that $h^0(\EE)= h^0(\EE_H) =dr$, which gives
equality in (a).

Conversely, equality in (a), using the same exact sequence, shows
that $\alpha:H^1(\EE(-1))\lra H^1(\EE)$ is injective. Since the
map $H^0(\EE(n))\lra H^0(\EE_H(n))$ will also be surjective for $n
\geq 0$, we find $\alpha(n):H^1(\EE(n-1))\lra H^1(\EE(n))$ is also
injective for $n\geq 0$. So by Serre's vanishing theorem,
$H^1(\EE(n))=0$ for $n>>0$ and hence for all $n\geq -1$. Therefore
$\chi(\EE(-1))=0$, and we get equality in (b), equivalent to
equality in (c).

Moreover, equality in (a) is equivalent to $\EE$ being Ulrich.
Indeed, if $\EE$ is Ulrich, then it has all its generators in
degree 0 and therefore $h^0(\EE)=dr$. Conversely, if we have
equality in (a), since we have assumed $h^0(\EE(-1))$, $\EE$ is
Ulrich by Lemma \ref{lemaH0}.
\end{proof}

The following lemma implies that the definition of Ulrich sheaf
given in this paper and the one given in \cite{EisSchreyer}
coincide.

\begin{lema}\label{lemaHn} Let $\EE$ be an Ulrich bundle
of rank $r$ on a nonsingular projective variety $X$ of dimension
$N$ and $\deg X=d$. Then,
\begin{itemize}
 \item[(i)] The restriction $\EE_H$ of $\EE$ to a general hyperplane section is also an Ulrich bundle.
\item[(ii)] $h^0(\EE)=dr$ and $H^N(\EE(i))=0$ for any $i \geq -N$.
\item[(iii)] $\deg \EE= r(d+g-1)$ 
$g$ is the genus of $X \cap H^{N-1}$ for a general hyperplane
section $H$.
\item[(iv)] If furthermore $X$ is subcanonical with
$\omega_X=\OO_X(m)$ for some $m\in \ZZ,$ then the twisted dual
sheaf $\EE^{\vee}(N+m+1)$ is also an Ulrich bundle.
\end{itemize}
\end{lema}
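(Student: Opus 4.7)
The plan is to prove parts (i)--(iv) in order, since each relies on its predecessors. The main tool throughout is the hyperplane restriction sequence $0 \to \EE(-1) \to \EE \to \EE_H \to 0$ combined with the ACM vanishing $H^i(\EE(t))=0$ for $0<i<N$ and the fact that, by definition of Ulrich, every element of $H^0_*(\EE)$ sits in degree $\geq 0$.

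For (i), I would first check that $\EE_H$ is ACM on $H$: in the cohomology of the restriction sequence, $H^i(\EE_H(t))$ is sandwiched between $H^i(\EE(t))$ and $H^{i+1}(\EE(t-1))$, both zero for $0<i<N-1$. Then taking the sequence at twists $-1$ and $0$ and using $H^0(\EE(-1))=0$ together with $H^1(\EE(-1))=H^1(\EE(-2))=0$ (ACM, valid for $N\geq 2$), I get $H^0(\EE_H(-1))=0$ and $h^0(\EE_H)=h^0(\EE)=dr$, so Lemma \ref{lemaH0} on $H$ (which has the same degree $d$) finishes the argument. For (ii), the equality $h^0(\EE)=dr$ is immediate from the definition, since generators in degree $0$ force $H^0(\EE(t))=0$ for $t<0$ and hence the $dr$ generators are exactly a basis of $H^0(\EE)$. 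For the vanishing $H^N(\EE(i))=0$ when $i\geq -N$, I would induct on $N$, with base case $N=1$ handled inside Proposition \ref{prop1curves}. For $N\geq 2$, (i) together with the inductive hypothesis applied to $\EE_H$ gives $H^{N-1}(\EE_H(i))=0$ for $i\geq -(N-1)$; combining with $H^{N-1}(\EE(i))=0$ (ACM) and $H^N(\EE_H(i))=0$ (dimension), the long exact sequence yields isomorphisms $H^N(\EE(i-1))\cong H^N(\EE(i))$ for $i\geq -(N-1)$, and chaining from Serre vanishing at large $i$ propagates the vanishing down to all $j\geq -N$.

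Part (iii) is a one-step reduction: take a general linear curve section $C=X\cap H_1\cap \cdots \cap H_{N-1}$ of degree $d$ and genus $g$, apply (i) iteratively to see that $\EE|_C$ is Ulrich on $C$, and invoke Proposition \ref{prop1curves}(b) to get $\deg(\EE|_C)=r(d+g-1)$; the equality $\deg \EE = c_1(\EE)\cdot H^{N-1} = \deg(\EE|_C)$ finishes the job.

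The hard part is (iv). The strategy is to verify the hypotheses of Lemma \ref{lemaH0} for $F:=\EE^\vee(N+m+1)$, reducing everything to Serre duality on $X$ together with the Hilbert polynomial of $\EE$. For $0<i<N$ and any $t$, Serre duality gives
$$H^i(F(t)) \cong H^{N-i}(\EE(-N-1-t))^\vee,$$
which vanishes since $0<N-i<N$ and $\EE$ is ACM; thus $F$ is ACM. The vanishing $H^0(F(-1))=0$ translates via Serre duality into $H^N(\EE(-N))=0$, which we already have from (ii). The core calculation is $h^0(F)=dr$: by Serre duality this equals $h^N(\EE(-N-1))$, so I need to pin down the Hilbert polynomial of $\EE$. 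By ACM plus (ii) plus the degree-$0$ generator condition, $\chi(\EE(-j))=0$ for $j=1,\ldots,N$, while $\chi(\EE)=h^0(\EE)=dr$; so the Hilbert polynomial, of degree $N$ with the $N$ prescribed roots $-1,\ldots,-N$, is forced to be
$$\chi(\EE(t)) = dr\binom{t+N}{N}.$$
Evaluating at $t=-N-1$ gives $\chi(\EE(-N-1))=(-1)^N dr$, and since all $h^i(\EE(-N-1))=0$ for $i<N$ by the same vanishings, we read off $h^N(\EE(-N-1))=dr$. Lemma \ref{lemaH0} then applies to $F$. The main obstacle is really bookkeeping Serre duality and the vanishings carefully enough to pin down the Hilbert polynomial; once that is in hand, everything else in (iv) is formal.
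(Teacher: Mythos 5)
Your proposal is correct. Parts (i)--(iii) follow essentially the same route as the paper: restriction to a general hyperplane section via $0\to\EE(-1)\to\EE\to\EE_H\to 0$, Lemma \ref{lemaH0} on $H$, and induction (or iterated restriction) down to a curve where Proposition \ref{prop1curves} takes over; your observation that $h^0(\EE)=dr$ already follows from the definition (minimal generators in degree $0$ of a module vanishing in negative degrees form a basis of the degree-$0$ piece) is a small simplification over the paper, which extracts this from the induction. The genuine divergence is in (iv). The paper keeps the induction on $N$ going: it checks that $\EE^{\vee}$ is ACM and that $h^0(\EE^{\vee}(N+m))=0$ by duality, then passes to $H$ (where $\omega_H=\OO_H(m+1)$) and uses the inductive statement that $\EE^{\vee}_H(N+m+1)$ is Ulrich to conclude $h^0(\EE^{\vee}(N+m+1))=dr$. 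You instead stay on $X$ and compute $h^0(\EE^{\vee}(N+m+1))=h^N(\EE(-N-1))$ directly from the Hilbert polynomial: the vanishings from ACM, from part (ii), and from $h^0(\EE(t))=0$ for $t<0$ force $\chi(\EE(-j))=0$ for $j=1,\dots,N$ and $\chi(\EE)=dr$, hence $\chi(\EE(t))=dr\binom{t+N}{N}$, and evaluation at $t=-N-1$ gives $h^N(\EE(-N-1))=dr$. This is valid and closer in spirit to Eisenbud--Schreyer; as a bonus it establishes the Hilbert polynomial formula that the paper proves separately as Lemma \ref{lemaHilbpoly}, at the cost of a slightly more delicate bookkeeping of which cohomology groups vanish at which twists.
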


\begin{proof} If $N=1$, then (i), (ii), and (iii) follow from Proposition
\ref{prop1curves}. For (iv) note that $h^0(\EE^{\vee}(m+2))$ is
equal to $h^1(\EE(-2))$ by duality. But $h^0(\EE(-2))=0$ and
$\chi(\EE(-2))=-dr$ by Proposition \ref{prop1curves}, so
$h^0(\EE^{\vee}(m+2))=dr,$ which shows that $\EE^{\vee}(m+2)$ is
Ulrich.

For $N\geq 2$ we use induction on $N$. By Bertini's theorem, we
may assume that a general hyperplane section $H$ is also
nonsingular.

(i) Consider the exact sequence
$$0 \lra \EE(-1) \lra \EE \lra \EE_H\lra 0$$
for a general hyperplane section $H$. It is easy to see that
$\EE_H$ is an ACM sheaf on $H$. Furthermore, from the exact
sequence we see that $H^0(\EE_H(-1))=0$ and $h^0(\EE_H)=dr$, where
$d=\deg(X)$ and $r=\rg \EE$, so that $\EE_H$ is an Ulrich sheaf on
$H$ by Lemma \ref{lemaH0}.

(ii) Since $\EE_H$ is Ulrich by (i), the induction hypothesis
applies to $\EE_H$ so $H^{N-1}(\EE_H(t))=0$ for $t \geq -(N-1)$
and $h^0(\EE_H)=dr$. Therefore $h^0(\EE)=dr$ and $H^N(\EE(t-1))
\cong H^N(\EE(t))$ for all $t\geq -N+1$. By Serre's vanishing, all
these higher cohomologies are 0 and we are done.

(iii) Since $\deg \EE$ coincides with $\deg \EE_H$ for a general
hyperplane section $H$ of $X$ and $\EE_H$ is Ulrich by (i), we can
apply the induction hypothesis to it. Therefore we obtain $\deg
\EE=r(d+g-1).$

(iv) To show that $\EE^{\vee}(N+m+1)$ is Ulrich, we verify the
conditions of Lemma \ref{lemaH0}. First, $h^0(\EE^{\vee}(N+m))$ is
dual to $h^N(\EE(-N)),$ which is zero by (ii) above. Next, for
$0<i<N,$ $h^i(\EE^{\vee}(n))$ is dual to $h^{N-i}(\EE(m-n))=0,$ so
we see that $\EE^{\vee}$ is an ACM sheaf. Now from
$h^0(\EE^{\vee}(N+m))=0$ it follows that
$h^0(\EE^{\vee}(N+m+1))=h^0(\EE^{\vee}_H(N+m+1)).$ Now $H$ has
dimension $N-1$ and $\omega_H=\OO_H(m+1),$ so by induction
$\EE^{\vee}_H(N-1+m+1+1)=\EE^{\vee}_H(N+m+1)$ is Ulrich, and so
its $h^0$ is $dr.$ Hence also $h^0(\EE^{\vee}(N+m+1))=dr$ and
$\EE^{\vee}(N+m+1)$ is Ulrich.
\end{proof}

\begin{rk}\label{Rmk_even}\rm
If $\EE$ is an Ulrich bundle of rank $r$ on a general hypersurface
$X$ in $\PP^{N+1}$ of dimension $N$ and degree $d\geq 3$, then
$r(d-1)$ must be even. So for example there is no Ulrich bundle of
rank 3 of  a general quartic hypersurface. The reason for this is
that under these hypotheses, because of the theorem of
Noether-Lefschetz, $\Pic X = \ZZ$, so $c_1(\EE) = mH$ for some
integer $m$. Then $\deg \EE = md$. But also $\deg \EE = r(d+g-1)$
by Lemma \ref{lemaHn}, and $X$ being a hypersurface, its linear
curve section is a plane curve of genus $g =
\frac{1}{2}(d-1)(d-2)$. Equating these two expressions for $\deg
\EE$, we find $m = \frac{1}{2}r(d-1)$, so $r(d-1)$ must be even.
\end{rk}

For future reference we compute the Hilbert polynomial of an
Ulrich bundle on a nonsingular variety.
\begin{lema}\label{lemaHilbpoly}
If $\EE$ is an Ulrich bundle of rank $r$ on a nonsingular
projective variety $X$ of degree $d$ and dimension $N$, then its
Hilbert polynomial is $$P_{\EE}(n)=rd{{n+N}\choose{N}}.$$
\end{lema}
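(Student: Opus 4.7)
The plan is to pin down the Hilbert polynomial $P_{\EE}(n)=\chi(\EE(n))$ by computing its values at $N+1$ integers. Since $P_{\EE}$ is a polynomial of degree $N=\dim X$, it is determined by its values at $n=0,-1,\ldots,-N$. I will show that $P_{\EE}(-j)=0$ for each $j\in\{1,2,\ldots,N\}$, while $P_{\EE}(0)=dr$. Since the polynomial $dr\binom{n+N}{N}=\frac{dr}{N!}(n+1)(n+2)\cdots(n+N)$ takes exactly these values, the two must coincide.

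To carry out the vanishing calculation I combine three facts that are already in hand. First, $\EE$ is ACM, so $H^i(\EE(n))=0$ for all $0<i<N$ and all $n\in\ZZ$. Second, Lemma \ref{lemaHn}(ii) gives $H^N(\EE(n))=0$ whenever $n\geq -N$; in particular this kills the top cohomology at $n=0,-1,\ldots,-N$. Third, the definition of Ulrich sheaf says that $H^0_\ast(\EE)$ is generated in degree zero by $dr$ elements, which forces $h^0(\EE(n))=0$ for every $n<0$ and $h^0(\EE)=dr$. Putting these together, for $n\in\{-1,\ldots,-N\}$ every term of $\chi(\EE(n))=\sum_i(-1)^i h^i(\EE(n))$ is zero, while at $n=0$ only $h^0(\EE)=dr$ survives.

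The lemma then follows from the uniqueness of a polynomial of degree $\leq N$ prescribed at $N+1$ points. I do not expect any real obstacle: the only slightly subtle step is the implication ``generators of $H^0_\ast(\EE)$ are in degree $0$'' $\Rightarrow$ $h^0(\EE(n))=0$ for $n<0$, and this is immediate since a graded module generated in non-negative degrees has no non-zero elements in negative degrees.
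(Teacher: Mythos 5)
Your proof is correct, but it takes a genuinely different route from the paper's. The paper argues by induction on $N$: the base case $N=1$ is Proposition \ref{prop1curves}, and for $N\geq 2$ the sequence $0\to\EE(-1)\to\EE\to\EE_H\to 0$ gives the difference equation $P_{\EE}(n)-P_{\EE}(n-1)=P_{\EE_H}(n)=rd\binom{n+N-1}{N-1}$, which is then solved by summing binomial coefficients (using $P_{\EE}(n)=h^0(\EE(n))$ for $n\geq 0$ to pin the constant). You instead interpolate: a polynomial of degree $\leq N$ is determined by its values at the $N+1$ points $n=0,-1,\dots,-N$, and the combination of ACM-ness (killing $h^i$ for $0<i<N$), Lemma \ref{lemaHn}(ii) (killing $h^N(\EE(n))$ for $n\geq -N$), and generation of $H^0_*(\EE)$ in degree $0$ by $dr$ elements (giving $h^0(\EE(n))=0$ for $n<0$ and $h^0(\EE)=dr$) shows $\chi(\EE(-j))=0$ for $1\leq j\leq N$ and $\chi(\EE)=dr$, exactly matching $rd\binom{n+N}{N}$ at those points. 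Both arguments lean on Lemma \ref{lemaHn} (the paper on parts (i) and (ii), you only on (ii)), and there is no circularity since that lemma is proved independently beforehand. Your version avoids the induction and the binomial summation entirely, at the price of invoking the interpolation principle; the paper's version is slightly more self-propelled in that the restriction-to-hyperplane mechanism it uses is the same one running through the whole section.
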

\begin{proof}
(See also \cite[2.2]{EisSchreyer}). By induction on $N$, the case
$N=1$ being Proposition \ref{prop1curves} above. For $N \geq 2$,
using the sequence $0 \lra \EE(-1) \lra \EE \lra \EE_H\lra 0$ we
find
$P_{\EE}(n)-P_{\EE}(n-1)=P_{\EE_H}(n)$ which by induction is
$rd{{n+N-1}\choose{N-1}}$.

Since $\EE$ is Ulrich, $H^i(\EE(n))=0$ for all $i>0$ and all
$n\geq 0$. Hence $P_{\EE}(n)=h^0(\EE(n))$ for $n\geq 0$, and
similarly for $\EE_H.$ But $h^0(\EE(n))=\sum_{j=0}^n
h^0(\EE_H(n))=\sum_{j=0}^n rd {{n+N-1}\choose{N-1}}.$ We conclude
by summing the binomial coefficients.
\end{proof}

We now turn our attention to the stability and semistability
property of these bundles.
\begin{defi} Let $X$ be a nonsingular projective variety and let $\EE$ be a vector bundle on it.
Then $\EE$ is said to be \textit{semistable} if for every nonzero
coherent subsheaf $\FF $ of $\EE$ we have the inequality
$$P_{\FF}/\rg(\FF) \leq P_{\EE}/\rg(\EE),$$
where $P_{\FF}$ and $P_{\EE}$ are the Hilbert polynomials of the
sheaves. It is \textit{stable} if one always has strict inequality
above.

The \textit{slope} $\mu(\EE)$ of $\EE$  is defined as
$\deg(c_1(\EE))/\rg(\EE).$ We say that $\EE$ is
$\mu$-\textit{semistable} if for every subsheaf $\FF$ of $\EE$
with $0 < \rg \FF < \rg \EE $, $\mu(\FF) \leq \mu (\EE)$. We say
$\EE$ is $\mu$-\textit{stable} if strict inequality $<$ always
holds. The two definitions are related as follows
$$\mu-stable  \quad \Rightarrow \quad stable \quad \Rightarrow \quad semistable \quad \Rightarrow \quad \mu-semistable$$
(see \cite[1.2.13]{HL}). Note that on a nonsingular curve stable
is equivalent to $\mu$-stable and semistable is equivalent to
$\mu$-semistable.
\end{defi}

\begin{prop}\label{prop2curves}
Let $X,\OO_X(1)$ be a nonsingular projective curve. Any Ulrich
bundle $\EE$ on $X$ is semistable. Furthermore, any coherent
subsheaf $\FF \subset \EE$ with the same slope is also an Ulrich
bundle.
\end{prop}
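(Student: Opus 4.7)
The plan is to deduce both assertions directly from Proposition \ref{prop1curves}, by exploiting that on a smooth curve every coherent subsheaf of a locally free sheaf is itself locally free (torsion-free = locally free in dimension one), and that semistability on a curve is equivalent to $\mu$-semistability.

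First I would set up the slope computation: by Lemma \ref{lemaHn}(iii), the Ulrich bundle $\EE$ has $\deg \EE = r(d+g-1)$, so
\[
\mu(\EE) = d + g - 1.
\]
Given a nonzero coherent subsheaf $\FF \subset \EE$ of rank $r'$, I would note that $\FF$ is locally free (as a torsion-free sheaf on a smooth curve), and that the inclusion $\FF(-1) \hookrightarrow \EE(-1)$ forces
\[
h^0(\FF(-1)) \leq h^0(\EE(-1)) = 0.
\]
Hence $\FF$ satisfies the hypothesis of Proposition \ref{prop1curves}, which yields
\[
\deg \FF \leq r'(d+g-1) = r' \mu(\EE),
\]
i.e.\ $\mu(\FF) \leq \mu(\EE)$. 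Since $\mu$-semistability is equivalent to semistability on a curve, this proves the first claim.

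For the second assertion, suppose $\FF \subset \EE$ has the same slope, so $\deg \FF = r'(d+g-1)$. This is precisely equality in part (b) of Proposition \ref{prop1curves} for $\FF$, and the proposition asserts that equality in (b) is equivalent to $\FF$ being Ulrich. Thus $\FF$ is an Ulrich bundle, completing the proof.

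The argument is essentially a one-line application of Proposition \ref{prop1curves}, so there is no serious obstacle; the only small subtlety to keep in mind is the torsion-freeness remark needed to guarantee that $\FF$ is locally free before invoking Proposition \ref{prop1curves}, which is automatic in dimension one.
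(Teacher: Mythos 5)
Your proof is correct and follows essentially the same route as the paper: both deduce $\mu(\FF)\leq d+g-1=\mu(\EE)$ by applying Proposition \ref{prop1curves}(b) to the locally free subsheaf $\FF$ (which satisfies $h^0(\FF(-1))=0$), and both obtain the Ulrich property of an equal-slope subsheaf from the equality case of that proposition. The extra details you supply (torsion-free implies locally free on a smooth curve, and the equivalence of semistability with $\mu$-semistability on curves) are exactly the facts the paper's proof uses implicitly.
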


\begin{proof}
From Proposition \ref{prop1curves} (b) we have $\mu(\EE)=d+g-1$
where $d$ and $g$ are the  degree and genus of $X$ respectively.
If $\FF$ is any coherent subsheaf of $\EE$, then $\FF$ is locally
free and $h^0(\FF(-1))=0$, so by  Proposition \ref{prop1curves}
(b), $\mu(\FF)\leq d+g-1$. Hence $\EE$ is semistable. If $\mu
(\FF) =\mu( \EE)$, then we have equality in Proposition
\ref{prop1curves} (b) for $\FF$, hence $\FF$ is also Ulrich.
\end{proof}

For varieties of higher dimension we have the following result on
Ulrich bundles (if they exist).
\begin{teo}\label{teo_semist}
Let $X, \OO_X(1)$ be a nonsingular projective variety, and let
$\EE$ be an Ulrich bundle on $X$. Then
\begin{itemize}
 \item[(a)] $\EE$ is semistable and $\mu$-semistable
\item[(b)] If $0\lra \FF \lra \EE \lra \GG \lra 0$ is an exact sequence of coherent sheaves with $\GG$ torsion-free,
and $\mu(\FF) =\mu(\EE)$, then $\FF$ and $\GG$ are both  Ulrich
bundles.
\item [(c)] If $\EE$ is stable, then it is also $\mu$-stable.
\end{itemize}
\end{teo}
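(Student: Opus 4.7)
My plan is to prove (a), (b), and (c) simultaneously by induction on $N = \dim X$. The base case $N=1$ is Proposition \ref{prop2curves}: on a curve $\mu$-(semi)stability coincides with (semi)stability, and the Ulrich property of a subsheaf with equal slope is exactly the last sentence there. For the inductive step, I fix a general hyperplane section $H \subset X$; by Lemma \ref{lemaHn}(i) the restriction $\EE_H$ is Ulrich on $H$, so it satisfies (a), (b), (c) by induction. A fact used throughout is that slopes with respect to $\OO_X(1)$ are preserved under restriction to general $H$: $\mu_H(\FF_H) = \mu_X(\FF)$ for any torsion-free $\FF$ on $X$, since $c_1(\FF_H) \cdot H^{N-2} = c_1(\FF) \cdot H^{N-1}$.

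For the $\mu$-semistability half of (a), take a coherent $\FF \subset \EE$ with $0 < \rg \FF < r$. For generic $H$ the restriction $\FF_H \hookrightarrow \EE_H$ remains injective, so the inductive $\mu$-semistability of $\EE_H$ gives $\mu(\FF_H) \leq \mu(\EE_H)$, which translates to $\mu(\FF) \leq \mu(\EE)$ on $X$.

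For (b), restrict the exact sequence to general $H$. Torsion-freeness of $\GG$ makes $\mathrm{Tor}^1(\GG,\OO_H) = 0$ for generic $H$, yielding an exact sequence $0 \to \FF_H \to \EE_H \to \GG_H \to 0$ on $H$ in which $\GG_H$ is still torsion-free and all three slopes remain equal. By the inductive (b), both $\FF_H$ and $\GG_H$ are Ulrich bundles on $H$. To lift back to $X$, I would verify the hypotheses of Lemma \ref{lemaH0} for each of $\FF$ and $\GG$: namely ACM-ness and $h^0(-)=d\cdot\rg(-)$, $h^0(-(-1))=0$. The intermediate cohomology vanishings on $X$ are obtained by climbing the hyperplane ladder through twists $t \in \ZZ$, combining the known vanishings on $H$ for $\FF_H,\GG_H$ with Serre vanishing at $t \gg 0$ and the long exact sequences coming from $0 \to \FF(t-1) \to \FF(t) \to \FF_H(t) \to 0$ (and similarly for $\GG$). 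Once the sheaves are seen to be ACM and locally free, counting $h^0$ via the restriction sequence and using $h^0(\FF(-1)) \leq h^0(\EE(-1)) = 0$ and $h^0(\GG(-1)) = 0$ gives $h^0(\FF) = d\,\rg\FF$ and $h^0(\GG) = d\,\rg\GG$, so both are Ulrich.

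With (b) in hand, full semistability in (a) follows immediately: for $\FF \subset \EE$, either $\mu(\FF) < \mu(\EE)$, and then the leading coefficients of the reduced Hilbert polynomials already separate them, or $\mu(\FF) = \mu(\EE)$, in which case replacing $\FF$ by its saturation (same slope) so the quotient becomes torsion-free and applying (b) shows $\FF$ is Ulrich, whence by Lemma \ref{lemaHilbpoly} the reduced Hilbert polynomials of $\FF$ and $\EE$ both equal $d\binom{n+N}{N}$. Part (c) is then a one-line argument: if $\EE$ were stable but not $\mu$-stable, a saturated proper subsheaf $\FF$ with $\mu(\FF) = \mu(\EE)$ would, by (b), be Ulrich with $p_\FF = p_\EE$, contradicting stability. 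The main obstacle is therefore the lifting step in (b): propagating the Ulrich property of $\FF_H, \GG_H$ on $H$ back to $\FF, \GG$ on $X$, which requires running the hyperplane restriction sequence on cohomology in both directions across all twists and confirming local freeness. All remaining parts follow cleanly from this.
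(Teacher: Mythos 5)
Your proposal follows essentially the same route as the paper's proof: induction on $\dim X$ via general hyperplane sections, with part (b) as the workhorse (restrict the sequence to $H$, apply the inductive hypothesis to $\FF_H,\GG_H$, lift back via the restriction ladder and a local-freeness check), and (a), (c) deduced from (b) by saturating subsheaves of equal slope and comparing Hilbert polynomials through Lemma \ref{lemaHilbpoly}. The only point to order carefully in the lifting step is that the vanishing of $H^1(\FF(n))$ does not come from the ACM-ness of $\FF_H$ alone: one must first force the equalities $h^0(\FF)=ds$ and $h^0(\GG)=dt$ from $h^0(\EE)\leq h^0(\FF)+h^0(\GG)$, which gives surjectivity of $H^0(\FF(n))\to H^0(\FF_H(n))$ and hence injectivity of $H^1(\FF(n-1))\to H^1(\FF(n))$, before Serre vanishing closes the ladder --- the paper runs these steps in exactly that order.
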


\begin{proof}
We will use induction on the dimension of $X$.

First suppose $\dim{X}=1$. Part (a) follows by \ref{prop2curves}
since on a curve semistable is equivalent to $\mu$-semistable. For
part (b) we have shown in Proposition \ref{prop2curves} that $\FF$
is Ulrich. Since $\GG$ is assumed to be torsion-free, it is
locally free on the curve $X$. By reason of degrees in the exact
sequence, the slope of $\GG$ will also be $d+g-1$, so by
Proposition \ref{prop1curves}(b) equality, $\GG$ will be Ulrich
also. Part (c) follows directly from the equivalence of stability
and $\mu$-stability on curves.

Now let $\dim X \geq 2$. Given $\EE$, we take a generic hyperplane
section $H$ of $X$, which will also be a nonsingular variety of
the same degree, and consider the restriction $\EE_H$ of $\EE$ to
$H$. By Lemma \ref{lemaHn}, $\EE_H$ is an Ulrich bundle on $H$.
Thus we can apply the induction hypothesis to $\EE_H.$

First we show that $\EE$ is $\mu$-semistable. Indeed, if $\FF
\subset \EE$ is any coherent subsheaf, then $\FF_H$ is a subsheaf
of $\EE_H$ for a general hyperplane section $H$, so
$\mu(\FF_H)\leq \mu(\EE_H)$ since $\EE_H$ is $\mu$-semistable by
induction. But the slope is preserved by passing to a hyperplane
section, so $\mu(\FF)\leq \mu(\EE)$ and  $\EE$ is
$\mu$-semistable.

Next we prove part (b). Let $$0\lra \FF \lra \EE \lra \GG \lra 0$$
be an exact sequence with $\GG$ torsion-free and
$\mu(\FF)=\mu(\EE).$ Choosing $H$ general, we have an exact
sequence
$$0\lra \FF_H \lra \EE_H \lra \GG_H \lra
0$$ with $\GG_H$ also torsion-free, so by the induction
hypothesis, $\FF_H$ and $\GG_H$ are both Ulrich sheaves on $H$.
Using the sequence $0\lra \FF(-1) \lra \FF\lra \FF_H\lra 0$ and
ditto for $\GG$ we conclude that $h^0(\FF(n))=h^0(\GG(n))=0$ for
$n<0.$ Hence $h^0(\FF)\leq ds$, where $s=\rg (\FF)$, and $h^0(\GG)
\leq dt$, where $t=\rg (\GG).$ But the rank of $\EE$ is $r=s+t$,
and $h^0(\EE)\leq h^0(\FF)+h^0(\GG)$, so we must have equality in
both cases. From this we conclude that $H^0(\FF) \lra H^0(\FF_H)$
is surjective and hence $H^0(\FF(n)) \lra H^0(\FF_H(n))$ is also
surjective for all $n\in \ZZ$, since $h^0(\FF_H(-1))=0$ and
$\FF_H$ is generated by global sections. The same holds also for
$\GG$. Therefore $H^1(\FF(n-1)) \lra H^1(\FF(n))$ is injective for
all $n\in \ZZ$, and ditto for $\GG$. But these groups are zero for
$n>>0$ by Serre vanishing, so they are all zero.

For $2\leq i < \dim{X}$, since $\FF_H$ is ACM, we have
$H^{i-1}(\FF_H(n))=0$ for all $n$. Hence $H^i(\FF(n-1))\lra H^i
(\FF(n))$ is injective for all $n$, and again by Serre vanishing
we conclude these groups are all zero.

To conclude that $\FF$ and $\GG$ are both Ulrich sheaves, it
remains to show that they are locally Cohen-Macaulay sheaves,
which amounts on the nonsingular variety $X$, to showing that they
are locally free. The argument is the same for $\GG$ and for
$\FF$, so we write $\FF$ only. To say $\FF$ is locally free is
equivalent to saying $\depth \FF=\dim{X}$, and this is equivalent
to the vanishing of the sheaf $\EE xt^i(\FF,\omega_X)$ for all
$i>0$, since $\omega_X$ is locally free on $X$. Using Serre's
vanishing and the spectral sequence of local and global Ext, this
is equivalent to saying $\Ext^i(\FF,\omega_X(n))=0$ for $i>0$ and
for all $n>>0$. By Serre duality on $X$, this is equivalent to
saying $H^i(X, \FF(-n))=0$ for $i<\dim{X}$ and all $n >>0.$ But
this we have already established, so $\FF$ is locally free, and
hence is an Ulrich sheaf. This completes the proof of part (b) of
the Theorem.

Next, to show that $\EE$ is also semistable, as in  part (a), let
$\FF$ be any coherent subsheaf of $\EE$. If $\mu(\FF)<\mu(\EE)$
then clearly $\frac{P_{\FF}}{\rg\FF} < \frac{P_{\EE}}{\rg\EE},$
because the slope dominates the Hilbert polynomial. On the other
hand, if $\mu(\FF)=\mu(\EE)$, then $P_{\FF} \leq P_{\FF'}$ where
$\FF'$ is a slightly larger sheaf, obtained by pulling back
torsion from $\EE/\FF$, and then by part (b), $\FF'$ is also
Ulrich of the same slope. In this case $\frac{P_{\FF}(n)}{\rg\FF}
\leq \frac{P_{\FF'}(n)}{\rg\FF} =
\frac{P_{\EE}(n)}{\rg\EE}=d{{n+N}\choose{N}}$ where $N=\dim X$ by
Lemma \ref{lemaHilbpoly}. Hence $\EE$ is also semistable.

Finally, to prove part (c), suppose that $\EE$ is stable. We wish
to show that $\EE$ is also $\mu$-stable, i.e. for any $\FF\subset
\EE $, $\mu(\FF)<\mu(\EE)$. We know in any case $\mu(\FF)\leq
\mu(\EE)$. If equality holds, then by (b), replacing $\FF$ by
$\FF'$ as above if necessary, $\FF$ will also be Ulrich, and
$\frac{P_{\FF}}{\rg\FF}=\frac{P_{\EE}}{\rg\EE}$, contradicting the
hypothesis $\EE$ stable.
\end{proof}

In this paper we will use two kinds of moduli spaces. One is the
usual moduli of semistable sheaves, as explained in \cite[Chapter 4]{HL}. 
For fixed rank and Chern classes, we obtain a quasiprojective
variety $M^{ss}_X(r; c_1, \dots,c_r)$ containing an open subset
$M^s_X(r; c_1, \dots,c_r)$ of stable bundles. The points of $M^s$
correspond to isomorphism classes of stable bundles, while the
points of $M^{ss}\setminus M^s$ correspond to $S$-equivalence
classes of semistable bundles (see \cite[4.3.4]{HL}). Here two
semistable bundles are called $S$-\textit{equivalent} if the
associated direct sum of stable bundles that occur as factors in a
Jordan-Hölder sequence are the same. Thus we may regard
$M^{ss}\setminus M^s$ as parametrizing \textit{polystable}
bundles, i.e. direct sums of two or more stable bundles. The space
$M^{ss}$ also corepresents the functor of families of semistable
bundles in the sense that, for any flat family $\EE$ on $X\times T
/ T,$ there is a corresponding morphism of $T \ra M^{ss},$ and
$M^{ss}$ is universal with this property \cite[2.2.1]{HL}.

The property of being Ulrich in a family of vector bundles is an
open condition. Indeed, the property of being locally free is
open, the property of being an ACM sheaf is open, and the property
$H^0(\EE(-1))=0$ is open. Since $H^i(\EE)=0$ for $i>0$ because
$\EE$ is ACM and by Lemma \ref{lemaHn}, we see that $h^0(\EE)$ is
of constant dimension in the family. Thus as soon as there is an
Ulrich bundle, $h^0(\EE)=\chi(\EE)=rd$ is constant and thus the
subset corresponding to Ulrich bundles is open. Therefore we have
an open subset $M^{ss,U}$ in $M^{ss}$ and a corresponding open
subset $M^{s,U}$ of $M^{s}$ corresponding to semistable and stable
Ulrich bundles, respectively.

For a point $m \in M^s$ corresponding to a stable vector bundle
$\EE$, the usual obstruction theory \cite[4.5.2]{HL} allows us to
compute the Zariski tangent space to $M^s$ as $\Ext^1(\EE,\EE),$
and obstructions as $\Ext^2(\EE,\EE).$ Thus in our case, if we
know there exist stable Ulrich bundles, we can estimate the
dimension of the moduli space in this way. However, this method
does not help us prove the existence of stable vector bundles.

To prove the existence of stable bundles, we introduce another
space, the modular family of simple vector bundles. Recall
\cite[\S 26]{HDefTh} that a \textit{modular family} for a class of
objects, say vector bundles on $X$, is a flat family $\EE$ on $X
\times S /S$, with $S$ a scheme of finite type such that
\begin{itemize}
\item[a)] each isomorphism class of bundles occurs at least once,
and at most finitely many times in the family
\item[b)] For each $s \in S,$ the local ring $\widehat{\OO}_{S,s}$
together with the induced family, pro-represents the local
deformation functor
\item[c)] for any other flat family $\EE'$ on $X\times S'/S'$ of
such bundles, there exists a surjective étale map $S'' \ra S'$ for
some scheme $S''$, and a morphism $S''\ra S$ such that
$\EE'\times_{S'} S'' / X\times S'' \cong \EE \times_{S} S'' /
X\times S''.$
\end{itemize}

Recall that a vector bundle $\EE$ on $X$ is \textit{simple} if
$\End(\EE)=k.$ For simple vector bundles, the local deformation
functor is pro-representable \cite[19.2]{HDefTh}. Then, just as in
the case of vector bundles on curves \cite[28.4]{HDefTh}, we
obtain a modular family. Surely such a modular family exists quite
generally, but we give the proof only in a restricted case since
it is easier.

\begin{prop}\label{Prop_modfam} On a nonsingular projective variety $X$,
any bounded family of simple bundles $\EE$ with given rank  
and Chern classes satisfying $H^2(\EE \otimes \EE^{\vee})=0$ 
has a smooth modular family.
\end{prop}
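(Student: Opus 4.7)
The plan is to realize the modular family as an \'etale-local quotient of an open subscheme of a Quot scheme by a free $PGL_N$-action, with smoothness coming from the vanishing $H^2(\EE\otimes\EE^{\vee})=0$.

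First I would use boundedness to reduce to a piece of a Quot scheme. Since the family is bounded, there exists an integer $n$ such that for every $\EE$ in the family one has $H^i(\EE(n))=0$ for $i>0$, the sheaf $\EE(n)$ is generated by global sections, and $N:=h^0(\EE(n))=\chi(\EE(n))$ is constant (determined by the rank and Chern classes). A choice of basis of $H^0(\EE(n))$ gives a surjection $\OO_X^N\twoheadrightarrow\EE(n)$, so every such $\EE$ corresponds to a point of the open subscheme $Q\subset\operatorname{Quot}(\OO_X^N,P)$ parametrizing quotients whose kernel twist $\EE(-n)$ of $\EE:=\mathcal{F}(-n)$ is locally free and simple, and for which the induced map $k^N\to H^0(\mathcal{F})$ is an isomorphism. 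Each of these is an open condition on the projective Quot scheme, so $Q$ is a quasi-projective scheme of finite type.

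Next, $GL_N$ acts on $Q$ by change of basis, and two points determine isomorphic quotients precisely when they lie in the same orbit. Since every bundle in the family is simple, its automorphism group is $k^{\ast}$, so the stabilizer in $GL_N$ of every point of $Q$ is exactly the center, and hence $PGL_N$ acts with trivial stabilizers on $Q$. By standard results on free actions of reductive groups (see \cite[\S 27]{HDefTh}) the action admits an \'etale-locally trivial geometric quotient $Q\to S$; descending the universal quotient on $X\times Q$ and twisting by $\OO_X(-n)$ yields a flat family $\EE$ on $X\times S$. Condition (a) of a modular family is satisfied because each isomorphism class is a single $PGL_N$-orbit, appearing once in $S$ and finitely many times in any \'etale trivialization. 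For (b), the orbit map identifies the Zariski tangent space at $[\EE]\in S$ with $\Ext^1(\EE,\EE)$, which pro-represents the local deformation functor of the simple bundle $\EE$ by \cite[19.2]{HDefTh}. Property (c) is obtained from the universal property of the Quot scheme: any family $\EE'/S'$ of such bundles, after passing to an \'etale cover $S''\to S'$ on which the pushforward $\pi_{S''\ast}\EE'(n)$ is trivialized, lifts to a map $S''\to Q$, and the composition $S''\to Q\to S$ provides the required morphism.

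Finally, smoothness of $S$ is immediate from the hypothesis. The obstruction space for the local deformation functor at $\EE$ is $\Ext^2(\EE,\EE)\cong H^2(\EE\otimes\EE^{\vee})=0$, so the functor is unobstructed and $\widehat{\OO}_{S,[\EE]}$ is a formal power series ring of dimension $\dim\Ext^1(\EE,\EE)$; since $S$ is of finite type, this gives smoothness. The main obstacle is the construction of the geometric quotient $S=Q/PGL_N$ as a reasonable geometric object: producing it as an algebraic space from the free reductive action is routine, and since the definition of modular family only demands the universal property up to \'etale base change, it suffices to work \'etale-locally, which one can do via Luna-type slice arguments. The remaining ingredients --- openness of simplicity and of the cohomological vanishings, and the identification of tangent and obstruction spaces with $\Ext^1$ and $\Ext^2$ --- are standard.
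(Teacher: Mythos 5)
Your proposal is correct and follows essentially the same route as the paper: both realize the bundles as an open locus in a Quot scheme of quotients of $\OO_X^N$, use simplicity to make the $PGL_N$-ambiguity a free action (the paper, following \cite[28.4]{HDefTh}, handles this via rigidified pairs $(\EE,\theta)$ rather than an explicit quotient), and derive smoothness and pro-representability from the vanishing $H^2(\EE\otimes\EE^{\vee})=0$ via the sequence $0\to \EE\otimes\EE^{\vee}\to \EE(m)^N\to Q^{\vee}\otimes\EE(m)\to 0$. The only cosmetic difference is that you phrase the conclusion as unobstructedness of the deformation functor, while the paper deduces smoothness of the Quot scheme directly from $H^1(Q^{\vee}\otimes\EE(m))=0$.
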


\begin{proof} We follow the proof of \cite[28.4]{HDefTh}. To eliminate the automorphisms induced by scalar multiplication,
we consider pairs $(\EE, \theta)$ as in \cite[28.4]{HDefTh}. We
pick $m>>0$ so that the sheaves $\EE(m)$ are generated by global
sections, and consider the $Quot$ scheme of quotients $\OO^N\lra
\EE(m)\lra0.$  Then the main point is to observe from the sequence
$$0\lra Q \lra \OO^N \lra \EE(m) \lra 0$$ taking its dual and
tensoring with $\EE(m)$, we have
$$0\lra \EE \otimes \EE^{\vee} \lra \EE(m)^N \lra Q^{\vee} \otimes\EE(m) \lra
0.$$ Since $H^1(\EE(m))=0$ for $m>>0$, and $H^2(\EE\otimes
\EE^{\vee})=0$, we find that $H^1(Q^{\vee}\otimes \EE(m))=0,$ so
the Quot scheme of quotients $\OO^N \lra \EE(m)\lra 0$ is smooth,
and also $H^0(Q^{\vee} \otimes\EE(m))\ra \Ext^1(\EE,\EE)$ is
surjective. This enables us to show, as in the proof of
\cite[27.2]{HDefTh}, that $\widehat{\OO}_{S,s}$ pro-represents the
local deformation functor. The rest of the proof proceeds as in
\cite[28.4]{HDefTh}.
\end{proof}

\begin{rk}\label{Rem_modfam}\rm
In particular, there is a smooth modular family for simple Ulrich
bundles on the cubic surface of given rank and first Chern class.
Indeed, they form a bounded family by Proposition
\ref{prop_irredfam} below, $c_2$ is determined from $c_1$ and $r$
(because the Hilbert polynomial of an Ulrich bundle is determined
by its rank by \ref{lemaHilbpoly}) and $H^2(\EE \otimes
\EE^{\vee}) \perp H^0((\EE \otimes \EE^{\vee})(-1))$ which is 0
because $\EE$ is simple.
\end{rk}

To aid in the computation of dimensions of families of extensions
and moduli spaces, we compute $\chi(\EE \otimes \FF^{\vee})$ for
Ulrich bundles $\EE$, $\FF$ on any algebraic surface.

\begin{prop}\label{lemachi} Let $X$ be a nonsingular projective surface
of degree $d$. Let $\EE$, $\FF$ be Ulrich bundles of ranks $r$,
$s$, respectively, and with first Chern classes $C,D$,
respectively. Then
$$\chi(\EE \otimes \FF^{\vee})= rD.K-C.D+rs(2d-1-p_a),$$
$$\chi(\EE \otimes \EE^{\vee})= rC.K-C^2+r^2(2d-1-p_a),$$
where $K$ is the canonical divisor on $X$ and $p_a$ is the
arithmetic genus of $X$.
\end{prop}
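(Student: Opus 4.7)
The plan is to apply Hirzebruch–Riemann–Roch on the surface $X$ to the bundle $\EE \otimes \FF^{\vee}$, and then eliminate the second Chern classes using the known Hilbert polynomial of an Ulrich bundle from Lemma \ref{lemaHilbpoly}.

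First I would write
$$\ch(\EE) = r + C + \tfrac{1}{2}(C^2 - 2c_2(\EE)), \qquad \ch(\FF^{\vee}) = s - D + \tfrac{1}{2}(D^2 - 2c_2(\FF)),$$
and $\td(X) = 1 - \tfrac{1}{2}K + \chi(\OO_X)\cdot [\mathrm{pt}]$. Multiplying $\ch(\EE)\ch(\FF^{\vee})\td(X)$ and picking off the degree-two part gives
$$\chi(\EE \otimes \FF^{\vee}) = rs\,\chi(\OO_X) - \tfrac{1}{2}(sC - rD).K + \tfrac{s}{2}(C^2 - 2c_2(\EE)) + \tfrac{r}{2}(D^2 - 2c_2(\FF)) - C.D.$$

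Next I would use the Ulrich hypothesis to replace the $c_2$ terms. By Lemma \ref{lemaHilbpoly} we have $\chi(\EE) = P_{\EE}(0) = rd$, so Riemann–Roch for $\EE$ alone reads $rd = r\chi(\OO_X) + \tfrac{1}{2}(C^2 - C.K) - c_2(\EE)$, giving
$$\tfrac{1}{2}(C^2 - 2c_2(\EE)) = rd - r\chi(\OO_X) + \tfrac{1}{2}C.K,$$
and symmetrically $\tfrac{1}{2}(D^2 - 2c_2(\FF)) = sd - s\chi(\OO_X) + \tfrac{1}{2}D.K$. Substituting these into the previous display, the $sC.K$ terms cancel (against the contribution from $\td(X)$), as do the $rD.K$ terms partially, and the $\chi(\OO_X)$ terms collapse to a single $-rs\chi(\OO_X)$. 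One is left with
$$\chi(\EE \otimes \FF^{\vee}) = 2rsd + rD.K - C.D - rs\,\chi(\OO_X).$$

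Finally, since $X$ is a nonsingular projective surface, $\chi(\OO_X) = 1 + p_a$, which yields the first claimed formula
$$\chi(\EE \otimes \FF^{\vee}) = rD.K - C.D + rs(2d - 1 - p_a).$$
The second formula is obtained by specializing $\FF = \EE$, so that $s = r$ and $D = C$.

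The only real obstacle is careful sign and bookkeeping management when expanding $\ch(\EE)\ch(\FF^{\vee})\td(X)$ and verifying that the two applications of Riemann–Roch make the $c_2$'s and the $C.K, D.K$ contributions from $\td(X)$ cancel cleanly; conceptually nothing beyond Riemann–Roch and the explicit Hilbert polynomial of an Ulrich bundle is needed.
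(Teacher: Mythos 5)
Your proposal is correct and follows essentially the same route as the paper: apply Riemann--Roch to $\EE\otimes\FF^{\vee}$ (your Chern-character bookkeeping reproduces exactly the paper's intermediate formula for $\chi(\EE\otimes\FF^{\vee})$ in terms of $c_2(\EE)$, $c_2(\FF)$), then eliminate the second Chern classes using $\chi(\EE)=rd$, $\chi(\FF)=sd$ from Lemma \ref{lemaHilbpoly}. The only difference is notational, and your final substitution and the identification $\chi(\OO_X)=1+p_a$ check out.
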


\begin{proof}
Let $c_2$ be the second Chern class of $\EE$. Then Riemann-Roch
theorem says
\begin{equation}\label{eq_chi}\chi(\EE)=\frac{1}{2}(C^2-2c_2)-\frac{1}{2}C.K
+r(1+p_a).
\end{equation}
Now call $d_2$ the second Chern class of $\FF.$ A straightforward
computation of Chern classes shows that $c_1(\EE \otimes
\FF^{\vee})=sC-rD$ and
$$c_2(\EE \otimes \FF^{\vee})=\frac{1}{2}s(s-1)C^2+sc_2+\frac{1}{2}r(r-1)D^2+rd_2-(rs-1)C.D.$$
Applying Riemann-Roch to the bundle $\EE \otimes \FF^{\vee}$ of
rank $rs,$ we find
$$\chi(\EE \otimes \FF^{\vee})=\frac{1}{2}(sC^2+rD^2)-sc_2-rd_2-C.D-\frac{1}{2}(sC-rD).K+rs(1+p_a).$$
Now as $\EE$ is an Ulrich bundle of rank $r$ on the surface $X$ of
degree $d$ we know that $\chi(\EE)=rd$ (see Lemma
\ref{lemaHilbpoly}). But by Riemann-Roch this is also equal to
(\ref{eq_chi}), so we can solve for $c_2$ and substitute in the
above formula for $\chi(\EE \otimes \FF^{\vee}).$ Doing the same
for the second Chern class $d_2$ of $\FF$ we obtain the desired
result.
\end{proof}

\begin{cor}\label{corchi}
Let $X$ be a del Pezzo surface of degree $d$. Let $\EE$, $\FF$ be
as in Proposition \ref{lemachi}. Then $\chi(\EE \otimes
\FF^{\vee})= (d-1)rs-C.D$ and $\chi(\EE \otimes
\EE^{\vee})=(d-1)r^2-C^2.$
\end{cor}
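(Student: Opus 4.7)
The plan is to derive the corollary by specializing the formulas of Proposition \ref{lemachi} to the del Pezzo situation. The key input is that a del Pezzo surface $X$ of degree $d$ is, by definition, embedded by its anticanonical linear system, so that the hyperplane class $H$ equals $-K$ and $H^2 = K^2 = d$. In particular $p_a(X) = 0$.

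First I would determine how $C.K$ and $D.K$ interact with the rank and degree. Lemma \ref{lemaHn}(iii) says $\deg \EE = r(d+g-1)$, where $g$ is the genus of a general curve section of $X$. Since the general hyperplane section lies in $|-K|$, adjunction gives
\[
2g - 2 = (-K).(-K) + (-K).K = d - d = 0,
\]
so $g = 1$ and therefore $\deg \EE = rd$. On the other hand $\deg \EE = C.H = -C.K$, which yields $C.K = -rd$; the analogous computation for $\FF$ gives $D.K = -sd$.

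Next I would substitute these values together with $p_a = 0$ into the formula of Proposition \ref{lemachi}. For the mixed term this gives
\[
\chi(\EE \otimes \FF^{\vee}) = r(-sd) - C.D + rs(2d - 1) = rsd - rs - C.D = (d-1)rs - C.D,
\]
and specializing $\FF = \EE$ (so $D = C$, $s = r$) gives the second identity
\[
\chi(\EE \otimes \EE^{\vee}) = r(-rd) - C^2 + r^2(2d - 1) = (d-1)r^2 - C^2.
\]

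There is no real obstacle here: the corollary is a pure substitution once one recognizes that on a del Pezzo surface the condition of being Ulrich (in the anticanonical embedding) forces the intersection of the first Chern class with $K$ to be $-rd$. The only mild care needed is to make sure the hyperplane class really is $-K$ (which is built into the standard definition of a del Pezzo surface of degree $d$) and that the genus of a general curve section is $1$.
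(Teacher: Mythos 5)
Your proposal is correct and follows essentially the same route as the paper: both substitute $p_a=0$, $K=-H$, and $\deg C = rd$, $\deg D = sd$ (the latter from Lemma \ref{lemaHn}) into Proposition \ref{lemachi}. Your explicit adjunction computation showing the general curve section has genus $1$ is just the step the paper leaves implicit when it invokes Lemma \ref{lemaHn}(iii).
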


\begin{proof}
This immediately follows from Proposition \ref{lemachi} because in
this case $p_a=0$, $K=-H$, $\deg C=rd$ and $\deg D=sd$ by Lemma
\ref{lemaHn}.
\end{proof}

\begin{cor}\label{cordim}
If $\EE$ is a simple or stable Ulrich bundle of rank $r$ and
$c_1(\EE)=D$ on the cubic surface $X$, then the modular family or
the coarse moduli space at that point is smooth of dimension
$D^2-2r^2+1.$
\end{cor}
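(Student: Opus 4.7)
The plan is to reduce the claim to the single vanishing $H^2(\EE\otimes\EE^{\vee})=0$, after which smoothness at $[\EE]$ follows from Proposition \ref{Prop_modfam} in the simple case and from the usual stable-bundle obstruction theory \cite[4.5.2]{HL} in the stable case, with tangent space $\Ext^1(\EE,\EE)=H^1(\EE\otimes\EE^{\vee})$. The dimension is then a short Euler-characteristic calculation using Corollary \ref{corchi}.

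First I would establish the key vanishing $H^2(\EE\otimes\EE^{\vee})=0$. On the cubic surface $\omega_X=\OO_X(-1)$, so Serre duality (applied to the locally free sheaf $\EE\otimes\EE^{\vee}$) gives
$$H^2(\EE\otimes\EE^{\vee})\;\cong\;H^0\bigl((\EE\otimes\EE^{\vee})(-1)\bigr)^{\vee}\;=\;\Hom(\EE,\EE(-1))^{\vee}.$$
By Theorem \ref{teo_semist}(a), $\EE$ is $\mu$-semistable, and twisting by $\OO_X(-1)$ preserves $\mu$-semistability, so $\EE(-1)$ is also $\mu$-semistable with $\mu(\EE(-1))=\mu(\EE)-3<\mu(\EE)$. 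A nonzero morphism $\varphi:\EE\to\EE(-1)$ would then produce an image $\im(\varphi)$ whose slope is at the same time $\geq\mu(\EE)$ (because it is a quotient of the $\mu$-semistable $\EE$) and $\leq\mu(\EE(-1))$ (because it is a subsheaf of the $\mu$-semistable $\EE(-1)$), which is impossible. Hence $\Hom(\EE,\EE(-1))=0$, giving the vanishing.

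With this in hand, in the simple case Proposition \ref{Prop_modfam} applies and produces a smooth modular family at $[\EE]$ whose dimension equals that of the tangent space $\Ext^1(\EE,\EE)$; in the stable case the standard deformation theory of stable sheaves says the coarse moduli space $\MM^s_X$ is smooth at $[\EE]$ with tangent space $\Ext^1(\EE,\EE)$ precisely because the obstruction space $\Ext^2(\EE,\EE)=H^2(\EE\otimes\EE^{\vee})$ vanishes. So in both cases we only need to compute $\dim\Ext^1(\EE,\EE)=h^1(\EE\otimes\EE^{\vee})$. Corollary \ref{corchi} applied with $d=3$ and $\FF=\EE$ yields $\chi(\EE\otimes\EE^{\vee})=2r^2-D^2$; simplicity gives $h^0(\EE\otimes\EE^{\vee})=\dim\End(\EE)=1$; and $h^2=0$ as established. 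Therefore
$$h^1(\EE\otimes\EE^{\vee})\;=\;h^0+h^2-\chi\;=\;1-(2r^2-D^2)\;=\;D^2-2r^2+1,$$
which is the asserted dimension.

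There is no serious obstacle here: the only substantive ingredient is the $H^2$-vanishing, which is essentially the remark foreshadowed in \ref{Rem_modfam} and follows from semistability (Theorem \ref{teo_semist}) together with the negativity of $K_X$ on the cubic. Everything else is the formal deformation-theoretic input and the character formula of Corollary \ref{corchi}.
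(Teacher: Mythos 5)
Your proof is correct and follows essentially the same route as the paper: Serre duality reduces smoothness to $H^2(\EE\otimes\EE^{\vee})\cong H^0((\EE\otimes\EE^{\vee})(-1))^{\vee}=\Hom(\EE,\EE(-1))^{\vee}=0$, and the dimension is then $h^1(\EE\otimes\EE^{\vee})=1-\chi(\EE\otimes\EE^{\vee})=D^2-2r^2+1$ by Corollary \ref{corchi}. The only (immaterial) difference is that you deduce $\Hom(\EE,\EE(-1))=0$ from the $\mu$-semistability of Ulrich bundles (Theorem \ref{teo_semist}), whereas the paper's Remark \ref{Rem_modfam} gets the same vanishing directly from simplicity of $\EE$; both one-line arguments are valid.
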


\begin{proof}
The modular family or the moduli space is smooth because $H^2(\EE
\otimes \EE^{\vee})=0$ (cf. Remark \ref{Rem_modfam}). Then, as
$h^0(\EE \otimes \EE^{\vee})=1$ for simple and stable bundles, the
dimension of this space is $h^1(\EE \otimes \EE^{\vee})=1-\chi(\EE
\otimes \EE^{\vee})$, which by Corollary \ref{corchi} is equal to
$D^2-2r^2+1.$
\end{proof}

\section{Existence of Ulrich sheaves on cubic surfaces}

In this section for any Ulrich bundle $\EE$ on the nonsingular
cubic surface $X$, we denote by $D \in \Pic X$ the divisor class
of its first Chern class $c_1(\EE)$. We express the Ulrich bundle
as an extension of the twisted ideal sheaf $\JJ_Z(D)$ for a
certain zero-scheme $Z$ by a trivial sheaf $\OO_X^{r-1}$. This
allows us to show that the Ulrich bundles of given rank and first
Chern class $D$ form an irreducible family in the moduli space.
For ranks 1 and 2 we list the possible divisors $D$ that
correspond to Ulrich bundles, using ad hoc arguments. The
technical heart of this paper is Proposition \ref{PropAlg} that
shows for a divisor $D$ satisfying certain incidence conditions,
how to choose a twisted cubic curve $T$, so that the new divisor
$D' = D - T$ satisfies the same conditions. To prove this we
express $D$ in a ``standard form" in $\Pic X$, and do a
case-by-case analysis. This proposition is used in the induction
needed to characterize those divisors $D$ corresponding to Ulrich
bundles of any rank, in terms of their intersection numbers with
lines. The same proposition is used again in the next section to
characterize those $D$ that correspond to stable Ulrich bundles.

\begin{prop}\label{prop_irredfam}

\begin{enumerate}
 \item[(a)] Let $\EE$ be an Ulrich bundle of rank $r\geq 2$ on the cubic surface $X$. Then there is an exact sequence
\begin{equation*}
0 \lra \OO_X^{r-1} \lra \EE \lra \JJ_Z(D) \lra 0
\end{equation*}
where $D$ is a divisor of positive degree, $Z$ is a zero-scheme in
$X$, and furthermore $H^0(\JJ_Z(D-H))=0$.
\item[(b)] Conversely, given an integer $r\geq 2$, a divisor $D$ of positive degree and a zero-scheme $Z$ of length $n$
such that $h^0(\JJ_Z(D-H))=0$, the collection of coherent sheaves
$\EE$ obtained as extensions
as above forms an irreducible family (for $r$, $D$, $n$ fixed and
$Z$ and choice of extension variable).
\end{enumerate}
\end{prop}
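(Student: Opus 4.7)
Since $\EE$ is Ulrich with all generators in degree $0$, it is globally generated and $h^0(\EE)=3r$. I plan to pick $r-1$ general sections $s_1,\dots,s_{r-1}\in H^0(\EE)$ and apply the standard dependency-locus argument to the induced map $\varphi\colon \OO_X^{r-1}\lra \EE$. Because $\rg\EE=r>r-1$, a generic $(r-1)$-tuple of sections will be linearly independent at the generic point of $X$, so $\varphi$ is injective as a sheaf map; its rank-drop locus has the expected codimension $2$ on the surface $X$, hence is a zero-dimensional subscheme $Z$, and the cokernel is a rank-one torsion-free sheaf on the smooth surface $X$, necessarily of the form $\JJ_Z\otimes L$. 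Matching first Chern classes forces $L=\OO_X(D)$ with $D=c_1(\EE)$. Positivity $\deg D=3r>0$ follows immediately from Lemma~\ref{lemaHn}(iii). For the final vanishing $H^0(\JJ_Z(D-H))=0$ I would twist the defining sequence by $-H$ and use $H^0(\EE(-H))=0$ from the Ulrich condition together with $H^1(\OO_X(-H))=0$, which holds because $\omega_X=\OO_X(-H)$ and $X$ is rational, so by Serre duality $H^1(\omega_X)\cong H^1(\OO_X)^{\vee}=0$.

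\textbf{Part (b).} Zero-schemes of length $n$ are parametrized by the Hilbert scheme $\operatorname{Hilb}^n(X)$, which is smooth and irreducible of dimension $2n$ by Fogarty's theorem. The open condition $h^0(\JJ_Z(D-H))=0$ cuts out an irreducible open subset $U\subseteq \operatorname{Hilb}^n(X)$ by semicontinuity. The crux will be to show that $\dim \Ext^1(\JJ_Z(D),\OO_X)$ is constant on $U$: Serre duality on the cubic surface gives
\[
\Ext^1(\JJ_Z(D),\OO_X)\;\cong\; H^1(\JJ_Z(D-H))^{\vee},
\]
and the cohomology sequence of
\[
0\lra \JJ_Z(D-H)\lra \OO_X(D-H)\lra \OO_Z\lra 0,
\]
combined with $H^0(\JJ_Z(D-H))=0$ on $U$ and $H^1(\OO_Z)=0$, yields
\[
h^1(\JJ_Z(D-H)) \;=\; n-h^0(\OO_X(D-H))+h^1(\OO_X(D-H)),
\]
a quantity independent of $Z\in U$. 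Hence the relative $\Ext^1$ sheaf is locally free on $U$, and the total space $P$ of pairs $(Z,\xi)$ with $\xi\in\Ext^1(\JJ_Z(D),\OO_X^{r-1})$ is a geometric vector bundle over the irreducible base $U$, so $P$ is irreducible. The universal extension over $X\times P$ then defines a flat family of coherent sheaves $\EE$, and the collection of $\EE$'s obtained as extensions is the image of $P$, hence an irreducible family.

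\textbf{Main obstacle.} The most delicate point in (a) is the Bertini-type genericity step producing a zero-dimensional $Z$ with torsion-free cokernel; this is standard on smooth surfaces but must be stated carefully. In (b), the key technical point is the constancy of $\dim\Ext^1$ on $U$: this is what upgrades the parameter set from a merely constructible family to an irreducible vector bundle, and it is precisely here that the openness hypothesis $h^0(\JJ_Z(D-H))=0$ built into the statement is used in an essential way.
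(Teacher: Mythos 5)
Your proposal is correct and follows essentially the same route as the paper: part (a) via the quotient by $r-1$ general sections and the twist-by-$-H$ cohomology sequence, and part (b) via Fogarty's theorem together with the constancy of $h^1(\JJ_Z(D-H))=n-\chi(\OO_X(D-H))$ on the open locus where $h^0(\JJ_Z(D-H))=0$, making the extensions into a bundle over an irreducible base. The only cosmetic difference is that the paper parametrizes the choice of $r-1$ extension classes by a Grassmannian of a vector bundle rather than the total space of $\Ext^1(\JJ_Z(D),\OO_X^{r-1})$; either yields irreducibility.
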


\begin{proof}
(a) Since $\EE$ is generated by global sections by definition, a
well-known lemma states that dividing by $r-1$ general sections
will leave a quotient of rank 1 that is torsion-free. Any such
torsion-free sheaf can be written as $\JJ_Z(D)$ for some divisor
$D$ and some zero-scheme $Z$. Since $D$ represents $c_1(\EE)$, we
have $\deg D=3r>0$.

Since $\EE$ is Ulrich, we have $H^0(\EE(-1))=0$. Since also
$H^1(\OO_X(-1))=0$ , we find $H^0(\JJ_Z(D-H))=0$.

(b) We first note that the Hilbert scheme of zero-schemes in $X$
of length $n$ is irreducible \cite{Fogarty}. The condition
$H^0(\JJ_Z(D-H))=0$ is an open condition on $Z$, so the family of
$Z's$ used here is irreducible. Next we observe that such an
extension is defined by a choice of $r-1$ elements of
$\Ext^1(\JJ_Z(D),\OO_X)$ which is dual to $H^1(\JJ_Z(D-H)).$
Because of the hypothesis $H^0(\JJ_Z(D-H))=0$ there is an exact
sequence
$$0 \ra H^0(\OO_X(D-H))\ra H^0(\OO_Z) \ra H^1(\JJ_Z(D-H)) \ra H^1(\OO_X(D-H))\ra 0.$$
Now $H^2(\OO_X(D-H))$ is dual to $H^0(\OO_X(-D))$, which is zero
because $D$ has positive degree. Therefore
$$h^1(\JJ_Z(D-H))=\length Z - \chi(\OO_X(D-H))$$
depends only on $D$ and $n$. Thus the dimension of
$\Ext^1(\JJ_Z(D),\OO_X)$ is constant as $Z$ varies, so the family
of all these sheaves is parametrized by a Grassmannian of a vector
bundle over an open subset of the Hilbert scheme, and hence is an
irreducible family.
\end{proof}

\begin{cor}\label{cor_irred}
The Ulrich bundles $\EE$ on the cubic surface $X$ of given rank
and first Chern class $c_1(\EE)\in \Pic(X)$ (if they exist) form
an irreducible family. More precisely, there is a smooth
irreducible variety $T$ and a vector bundle $E$ on $X\times T$
such that each fiber for $t\in T$ is an Ulrich bundle on $X$ with
given rank and $c_1(\EE)$, and all such Ulrich bundles appear in
the family $E$ (in general many times) .\end{cor}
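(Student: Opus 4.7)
The plan is to package the extensions of Proposition \ref{prop_irredfam} into a single flat family over a smooth irreducible base. First, for an Ulrich bundle $\EE$ of rank $r \geq 2$ with $c_1(\EE) = D$, Lemma \ref{lemaHilbpoly} fixes the Hilbert polynomial (and hence $c_2(\EE)$) in terms of $r$. A Chern class computation for the short exact sequence
\begin{equation*}
0 \lra \OO_X^{r-1} \lra \EE \lra \JJ_Z(D) \lra 0
\end{equation*}
of Proposition \ref{prop_irredfam}(a) then determines $n := \length(Z)$ in terms of $r$, $D$, $c_2(\EE)$, so $n$ is actually a function of $r$ and $D$ alone. (The case $r=1$ is vacuous: once $c_1$ is fixed there is at most one Ulrich line bundle.)

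Next, I would assemble the parameter space. Let $U \subset \mathrm{Hilb}^n(X)$ be the open subscheme consisting of zero-schemes $Z$ satisfying $h^0(\JJ_Z(D-H))=0$; by Fogarty's theorem $\mathrm{Hilb}^n(X)$ is smooth and irreducible, and hence so is $U$. The argument in the proof of Proposition \ref{prop_irredfam}(b) shows that $\dim \Ext^1(\JJ_Z(D), \OO_X)$ is constant over $U$, so these $\Ext^1$'s glue to a vector bundle on $U$; the choices of $r-1$ independent extension classes are parametrized by an open subvariety $G$ of an associated Grassmannian bundle over $U$, which inherits smoothness and irreducibility from $U$.

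Now I would cut down to the Ulrich locus. The property of the middle term of the universal extension being locally free is open, as are the ACM condition and $H^0(\EE(-1))=0$; by the openness remarks preceding Proposition \ref{Prop_modfam}, being Ulrich is therefore an open condition in any flat family. Let $T \subset G$ be the open subscheme on which the middle term of the universal extension is Ulrich. By Proposition \ref{prop_irredfam}(a), $T$ is nonempty whenever Ulrich bundles with the prescribed invariants exist, and being an open subset of the smooth irreducible variety $G$ it is itself smooth and irreducible. The restriction of the universal extension to $X \times T$ is the desired vector bundle $E$.

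Finally, every Ulrich bundle $\EE$ with the given rank and first Chern class arises as some fiber $E_t$: applying Proposition \ref{prop_irredfam}(a) to $\EE$ produces an extension of the required form, hence a point $t \in T$ with $E_t \cong \EE$; different choices of the $r-1$ general sections of $\EE$ (equivalently, of an $(r-1)$-dimensional subspace of $H^0(\EE)$) typically yield distinct points of $T$ all reconstructing $\EE$, which is the source of the clause ``in general many times''. The only step requiring genuine care is the bookkeeping that forces $n$ to be determined by $r$ and $D$; once that is in hand, the rest follows essentially mechanically from Proposition \ref{prop_irredfam} together with the openness of the Ulrich condition.
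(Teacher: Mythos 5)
Your proposal is correct and follows essentially the same route as the paper: fix $n=\length Z$ via Lemma \ref{lemaHilbpoly} and the extension of Proposition \ref{prop_irredfam}(a), take the irreducible parameter space (Grassmannian bundle over the Hilbert scheme) from Proposition \ref{prop_irredfam}(b), and pass to the open Ulrich locus using the openness discussion of Section 2. Your extra remarks on the $r=1$ case and on why each bundle appears ``in general many times'' are accurate elaborations of what the paper leaves implicit.
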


\begin{proof}
Since $\EE$ is an Ulrich bundle, its Hilbert polynomial is already
determined by its rank (see Lemma \ref{lemaHilbpoly}), so in
particular the length $n$ of the zero-scheme $Z$ of part (a) of
the previous proposition, which represents the second Chern class
of $\EE$, is determined by $r$. Then using the given $r,D,n$, we
consider the irreducible family of coherent sheaves described in
\ref{prop_irredfam}(b). The property of being Ulrich is open (see
discussion on moduli spaces in section 2).  This open subset gives
the parameter space $T$ for the required irreducible family.
\end{proof}


\begin{cor}\label{cor_modspace} The moduli space $\MM^{ss,U}_X(r;D,n)$ of semistable Ulrich bundles  is irreducible.
\end{cor}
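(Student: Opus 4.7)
The plan is to deduce this directly from Corollary \ref{cor_irred} together with the universal (corepresenting) property of the moduli space and the semistability of Ulrich bundles.

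First, observe that for Ulrich bundles the second Chern class (equivalently the length $n$ of the zero-scheme $Z$ in Proposition \ref{prop_irredfam}) is forced by the rank $r$ and first Chern class $D$: by Lemma \ref{lemaHilbpoly} the Hilbert polynomial of an Ulrich bundle depends only on $r$, so together with $c_1 = D$ the invariant $c_2$ is determined. Hence $\MM^{ss,U}_X(r;D,n)$ really does parametrize the entire family produced in Corollary \ref{cor_irred}.

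Next, by Corollary \ref{cor_irred} there exists a smooth irreducible scheme $T$ and a vector bundle $E$ on $X \times T$, flat over $T$, whose fibers realize every Ulrich bundle of rank $r$ and first Chern class $D$ (possibly multiple times). By Theorem \ref{teo_semist}(a) every such fiber is semistable, so $E/T$ is a flat family of semistable bundles of the prescribed rank and Chern classes. The corepresenting property of the moduli functor (as recalled in section 2, cf.\ \cite[2.2.1]{HL}) then yields a classifying morphism $\varphi : T \lra \MM^{ss,U}_X(r;D,n)$.

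Finally, I would argue that $\varphi$ is surjective: any semistable Ulrich bundle $\EE$ (or, for the boundary $\MM^{ss,U}\setminus\MM^{s,U}$, any polystable Ulrich bundle obtained as the associated graded of a Jordan--H\"older filtration) has rank $r$ and $c_1 = D$, hence appears as some fiber $E_t$ by Corollary \ref{cor_irred}; its $S$-equivalence class is therefore the image $\varphi(t)$. Since the image of an irreducible scheme under a morphism is irreducible, and surjections preserve irreducibility, we conclude that $\MM^{ss,U}_X(r;D,n)$ is irreducible. The only subtle point is handling the polystable locus, but since polystable Ulrich bundles are themselves Ulrich (direct sums of Ulrich bundles are Ulrich, as is immediate from the defining cohomological conditions), they too appear in the family $E$, so no additional argument is needed.
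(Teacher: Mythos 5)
Your proof is correct and follows essentially the same route as the paper: map the irreducible family of Corollary \ref{cor_irred} to the coarse moduli space via the corepresenting property (using Theorem \ref{teo_semist} for semistability of all fibers), and observe that the image is exactly $\MM^{ss,U}_X(r;D,n)$, hence irreducible. The extra care you take with the polystable locus and with $c_2$ being determined by $r$ and $D$ is consistent with, and only slightly more detailed than, the paper's argument.
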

\begin{proof}
As all Ulrich bundles are semistable by Theorem \ref{teo_semist},
there is a map from the  irreducible family in Corollary
\ref{cor_irred} to the coarse moduli space $\MM^{ss}_X(r;D,n)$.
The image of this map is $\MM^{ss,U}_X(r;D,n)$ which is therefore
irreducible.
\end{proof}

\begin{prop}\label{cor_nonsing}
If $\EE$ is an Ulrich bundle of rank $r\geq 1$ on a cubic surface
$X$ with first Chern class $D$, then
\begin{itemize}
\item[a)] $\deg D=3r$
\item[b)] $D^2=2c_2(\EE)+r>0$
\item[c)] $0\leq D.L \leq 2r$ for all lines $L$ on $X$
\item[d)] $D$ can be represented by an irreducible nonsingular curve.
\end{itemize}
\end{prop}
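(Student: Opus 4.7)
The four assertions are essentially independent, but all flow from tools already developed in the paper (Lemmas \ref{lemaHn}, \ref{lemaHilbpoly} and Proposition \ref{prop_irredfam}).

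Part (a) I would read off directly from Lemma \ref{lemaHn}(iii): the general hyperplane section of the cubic surface is a plane cubic of genus $g=1$ in degree $d=3$, so $\deg D = \deg \EE = r(d+g-1) = 3r$. For part (b), I would combine surface Riemann--Roch with Lemma \ref{lemaHilbpoly}. On the cubic surface one has $\chi(\OO_X)=1$ and $K=-H$, so
$$\chi(\EE) = \tfrac{1}{2}(D^2-2c_2) + \tfrac{1}{2}D.H + r,$$
while Lemma \ref{lemaHilbpoly} gives $\chi(\EE)=3r$. Using $D.H=3r$ from part (a), this collapses to $D^2 = 2c_2(\EE) + r$. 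To see $c_2\ge 0$: for $r\ge 2$, Proposition \ref{prop_irredfam}(a) exhibits $\EE$ as an extension of $\JJ_Z(D)$ by $\OO_X^{r-1}$, so $c_2(\EE)=\length(Z)\geq 0$; for $r=1$, $\EE$ is a line bundle and $c_2=0$. Either way $D^2\ge r>0$.

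For part (c), restrict to a line $L\cong \PP^1$ and split $\EE|_L = \bigoplus_{i=1}^r \OO_{\PP^1}(a_i)$, so that $D.L = \sum a_i$. Ulrich bundles are generated by global sections (their generators lie in degree $0$), so $\EE|_L$ is globally generated, forcing $a_i\ge 0$ and hence $D.L\ge 0$. For the upper bound, I would invoke Lemma \ref{lemaHn}(iv): since $X$ is subcanonical with $\omega_X=\OO_X(-1)$, i.e.\ $m=-1$ and $N=2$, the twisted dual $\EE^\vee(2)$ is again Ulrich, and therefore also globally generated. Since $\OO_X(1)|_L=\OO_{\PP^1}(1)$, we get $\EE^\vee(2)|_L=\bigoplus \OO_{\PP^1}(2-a_i)$ globally generated, so $a_i\le 2$ and $D.L\le 2r$.

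Part (d) is the most delicate and will be the main obstacle. Global generation of $\EE$ descends to $\det \EE = \OO_X(D)$ (as a quotient of $\bigwedge^r H^0(\EE)\otimes \OO_X$), so $|D|$ is base-point free. Since $D^2>0$ from (b), the associated morphism $\phi_{|D|}\colon X\to \PP^N$ is generically finite onto its image, and Bertini (applied to a base-point-free linear system on a smooth variety) yields a smooth general member. The remaining issue is irreducibility, equivalently connectedness of that smooth member. In characteristic zero I would conclude by Kawamata--Viehweg applied to $D-K_X = D+H$ (big and nef since $D$ is nef by (c) and $H$ is ample), giving $H^1(\OO_X(-D))=0$ and thus $h^0(\OO_D)=1$. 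In arbitrary characteristic I would instead verify $H^1(\OO_X(D-H))=0$ directly by expressing $D$ in a standard basis of $\Pic X$ (the case analysis underlying Proposition \ref{PropAlg}), using that the constraints $D.L\ge 0$ for all $27$ lines and $D.H=3r>0$ put $D-H$ into a range where cohomology can be computed explicitly on the rational surface.
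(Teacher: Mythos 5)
Parts (a) and (b) coincide with the paper's proof: $\deg D=3r$ from Lemma \ref{lemaHn}, then surface Riemann--Roch against $\chi(\EE)=3r$ from Lemma \ref{lemaHilbpoly}, with $c_2\geq 0$ coming from the effective zero-scheme in Proposition \ref{prop_irredfam}(a) (you are even slightly more careful in separating off $r=1$). Part (c) is correct but genuinely different: the paper restricts the linear resolution $0\to\OO_{\PP^3}(-1)^{3r}\to\OO_{\PP^3}^{3r}\to\EE\to 0$ of \cite[3.7]{CH2} to the line and analyzes the image of the induced map as a subbundle $\OO_L(-1)^s\oplus\OO_L^t$ of $\OO_L^{3r}$; you instead split $\EE|_L=\oplus\OO_L(a_i)$ and use global generation of $\EE$ and of its Ulrich twisted dual $\EE^{\vee}(2)$ (Lemma \ref{lemaHn}(iv)) to get $0\leq a_i\leq 2$. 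Your route avoids the external citation, stays inside the lemmas of Section 2, and gives the stronger summand-wise bound; the paper's route records the explicit splitting type of $\im\alpha$, which is what it actually needs elsewhere.

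Part (d) is where there is a genuine gap. The paper disposes of it in one line by citing \cite[V, Exercise 4.8]{AG}, which says precisely that a divisor class on the cubic surface with $D^2>0$ and $D.L\geq 0$ for every line contains an irreducible nonsingular curve --- valid over an algebraically closed field of arbitrary characteristic, which is the standing hypothesis of the paper. Your replacement argument breaks in positive characteristic at the very first step you treat as harmless: Bertini's smoothness theorem for a base-point-free (but not very ample) linear system can fail in characteristic $p$ (the generic member of a free pencil can be everywhere singular), so ``base-point free $+$ Bertini $\Rightarrow$ smooth general member'' is not available here; you only flag the characteristic issue for connectedness, not for smoothness. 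In addition, your characteristic-$p$ connectedness argument (``verify $H^1(\OO_X(D-H))=0$ by a case analysis in the standard basis'') is a plan rather than a proof. Both defects are repairable --- e.g.\ by running the whole of (d) through the blow-up description of $X$ as in the proof of \cite[V, 4.11]{AG} --- but as written the argument does not establish (d) in the generality the paper requires.
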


\begin{proof} a) $\deg D=\deg \EE =3r$ by Lemma
\ref{lemaHn}.

b) The  Riemann-Roch theorem applied to $\EE$ says
$$\chi(\EE)=r+\frac{1}{2}c_1(\EE).H+\frac{1}{2}(c_1(\EE)^2-2c_2(\EE)).$$
On the other hand, by Lemma \ref{lemaHilbpoly} $\chi(\EE)=3r$.
Since $c_1(\EE)=D$ and $D.H=3r$ we can solve for $c_1(\EE)^2=D^2$
to get b). Note that by Proposition \ref{prop_irredfam}, the
second Chern class $c_2(\EE)$ is represented by an effective
zero-scheme, so $c_2(\EE)\geq 0$.

c) If $\EE$ is an Ulrich bundle of rank $r$  with $c_1(\EE)=D$,
then it has a resolution over $\PP^3$ of the form
$$0\lra \OO_{\PP^3}(-1)^{3r}\lra \OO_{\PP^3}^{3r} \lra \EE \lra 0 $$
by \cite[3.7]{CH2}. If $L$ is a line in $X$, restricting to $L$ we
get
$$\dots \lra \OO_{L}(-1)^{3r}\srel{\alpha}{\lra} \OO_{L}^{3r} \lra \EE_L \lra 0.$$
The image of $\alpha$ must be a subbundle of $\OO_L^{3r}$ of rank
$2r$, which is also a quotient of $\OO_L(-1)^{3r}.$ On $L$, vector
bundles are completely decomposable, so it must be of type
$\OO_L(-1)^s\oplus \OO_L^t$ for some $s,t\geq 0$, $s+t=2r.$ The
degree of this bundle is $-s$, so the degree of the quotient
$\EE_L=\OO_L^{3r}/\im \alpha$ (which is equal to $D.L$) is $s$ and
$0\leq s \leq 2r.$

d) This follows from b) and c) \cite[V, Exercise 4.8]{AG}.
\end{proof}

\begin{ex}\label{exrk1}
\rm \textbf{Ulrich bundles of rank 1}. These will be  line bundles
$\LL$, with $\deg \LL=3$, $h^0(\LL)=3$, $h^0(\LL(-1))=0$, and
$\LL$ generated by global sections. Thus $\LL$ corresponds to a
curve of degree $3$ irreducible and nonsingular by Proposition
\ref{cor_nonsing}. The only such curves on the cubic surface are
$H$, (a hyperplane section, in which case $h^0(\LL(-1))\neq 0$)
and twisted cubic curves. Thus $\LL \cong \OO(T)$ where $T$ is a
twisted cubic curve. There are 72 classes of twisted cubic curves
in $\Pic X$, represented by (using notation of \cite[V, \S 4]{AG})
$$\begin{array}{rcl}
T_A & = & (1; 0,0,0,0,0,0)\\
T_B & = & (2;1,1,1,0,0,0) \\
T_C & = & (3;2,1,1,1,1,0) \\
T_D & = & (4;2,2,2,1,1,1) \\
T_E & = & (5;2,2,2,2,2,2) \\
\end{array}$$
and their permutations. The permutations give 1 of type $T_A$, 20
of $T_B$, 30 of $T_C$, 20 of $T_D$ and 1 of $T_E$, making 72 in
all. For future reference we record that $T^2=1$ for any twisted
cubic curve $T$, and that the intersection of two distinct classes
can be 2,3,4 or 5 (for example $T_A.T_B=2$, $T_A.T_C=3$,
$T_A.T_D=4$, $T_A.T_E=5$).
\end{ex}

\begin{ex}\rm\label{exrk2}\textbf{Rank 2 Ulrich bundles on the cubic surface}.
We start by listing the possible irreducible nonsingular curves
$D$ (see Proposition \ref{cor_nonsing}) of degree 6 on the cubic
surface, given as divisors $(a;b_1, \dots,b_2)$ in standard form
with $a\geq b_1+b_2+b_3$, and $b_1 \geq \dots \geq b_6$ (see
Proposition \ref{Propstdform}).

\begin{center}
\begin{tabular}{|c|c|c|c|c|c|c|}
  & $D$&$D^2$&$ \sum T_i$&$\exists\textrm{  Ulrich}$&$\dim\{\textrm{ss simple}\}$&$\dim\{$stable$\}$\\
  \hline
$a$&(2;0,0,0,0,0,0)&4&$2A$&$\checkmark$& -&-\\
$b$&(3;1,1,1,0,0,0)&6&$A+B$&$\checkmark$&-&-\\
$c$&(3;2,1,0,0,0,0)&4&-& &-&-\\
$d$&(4;2,1,1,1,1,0)&8&$A+C$&$\checkmark$&0&1\\
$e$&(4;1,1,1,1,1,1)&10&$B+B'$&$\checkmark$&1&3\\
$f$&(6;2,2,2,2,2,2)&12&$A+E$&$\checkmark$&2&5\\
\hline
\end{tabular}
\end{center}
We have written the various divisors $D$ as sums of twisted cubic
curves (labeled as in Example \ref{exrk1}) in the third column, by
inspection. The sign $'$ on a letter, such as $B'$, means a
permutation of the form listed for that letter. In this case,
$B'=(2;0,0,0,1,1,1)$. The column $\dim\{\textrm{ss simple}\}$
refers to the dimension of properly semistable simple Ulrich
bundles and $\dim\{\textrm{stable}\}$ to the dimension of stable
Ulrich bundles. We will justify these two last columns later in
section 4.

To determine the possible existence of an Ulrich bundle with
$c_1=D,$ first note that if there are stable bundles corresponding
to $D,$ then the dimension of the moduli space is
$D^2-2r^2+1=D^2-7$ (see Corollary \ref{cordim}), and this number
must be nonnegative, so $D^2\geq 7$.  Thus for the first three
cases $a,b,c$ above there can be no stable Ulrich bundles. If
there is any Ulrich bundle $\EE$ at all, then it must be
semistable by Theorem \ref{teo_semist}, hence an extension of two
rank 1 Ulrich bundles: take any rank 1 subsheaf $\FF$ of the same
slope, pull back torsion of $\GG=\EE/\FF$ if necessary, and apply
Theorem \ref{teo_semist}. Each of these is a twisted cubic curve,
so $D=T_1+T_2$ is a sum of two twisted cubic curves. Since the
divisor in case $c$ above cannot be written in this way, there is
no Ulrich bundle corresponding to that $D$. On the other hand, in
the other five cases, just taking a direct sum $\OO(T_1) \oplus
\OO(T_2)$ shows the existence of Ulrich bundles for those values
of $D$.
\end{ex}

\begin{prop}\label{Propstdform}{\rm (Standard form)}
Let $D$ be a divisor on a nonsingular cubic surface $X$.
\begin{itemize}
\item [(a)] There is a choice of 6 points in $\PP^2$ such that when $X$ is represented as the blow-up of
these 6 points, $D$  can be represented as $(a; b_1, \dots,b_6)$
with $b_1 \geq \dots \geq b_6$  and $a\geq b_1+b_2+b_3$ in the
notation of \cite[V, \S 4]{AG}.
\item [(b)] The integers  $a,b_1, \dots,b_6$ in a representation satisfying
the conditions of \textrm{(a)} are uniquely determined. In
particular, $b_3,b_4,b_5,b_6$ are the four smallest numbers of the
set $\{D.L \mid L \textrm{ is a line on } X\}.$
\item [(c)] In that representation $b_6=\min\{D.L \mid L \textrm{ is a line on } X\}$ and
$D.G_1=2a-\sum_{i=2}^6 b_i=\max \{D.L \mid L \textrm{ is a line on
} X\}$ ($G_1$ refers to the notation of \cite[V, 4.9]{AG}). One
can also write $D.G_1=d-a+b_1$ where $d= \deg D.$
\item [(d)] The integer $a$ can be characterized as
$$a =\min \{D.T \mid T \textrm{ is a twisted cubic on }X\}.$$
\end{itemize}
\end{prop}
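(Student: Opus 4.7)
The plan is to establish the four assertions in the order (a), (d), (c), (b): statements (d) and (c) will supply intrinsic formulas for the coefficients, which then pin down uniqueness in (b).

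For (a), given any representation $(a;b_1,\dots,b_6)$ of $D$, I would first permute the six blown-up points so that $b_1\geq\dots\geq b_6$. If $a<b_1+b_2+b_3$ I apply the standard quadratic Cremona transformation based on the first three points, producing a new realization of $X$ as the blow-up of $\PP^2$ at six (different) points in which $D$ becomes
\begin{equation*}
(2a-b_1-b_2-b_3;\,a-b_2-b_3,\,a-b_1-b_3,\,a-b_1-b_2,\,b_4,\,b_5,\,b_6),
\end{equation*}
so the new first coordinate $a'=2a-b_1-b_2-b_3$ is strictly less than $a$. I then re-sort and repeat. For termination, I note that in every representation the class $(1;0,\dots,0)$ is some twisted cubic curve $T$ on $X$, so the first coordinate equals $D\cdot T$; hence $a$ is bounded below by the fixed integer $\min\{D\cdot T\mid T\text{ a twisted cubic on }X\}$, and strict decrease of $a$ at each Cremona step forces the algorithm to halt.

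With $D$ in standard form, (d) reduces to verifying $D\cdot T\geq a$ for each of the five orbits $T_A,\dots,T_E$ of twisted cubic classes listed in Example \ref{exrk1}. Since $D\cdot T_A=a$ the minimum is attained; for the remaining orbits the intersection is minimized by pairing the largest cubic coefficients with the largest $b_i$'s. For instance $\min D\cdot T_B=2a-b_1-b_2-b_3\geq a$ is precisely the standard-form hypothesis, and the inequalities for $T_C,T_D,T_E$ reduce similarly to $a\geq b_1+b_2+b_3$ together with $b_1\geq\dots\geq b_6$. For (c), I would enumerate the 27 lines in standard form: $D\cdot E_i=b_i$, $D\cdot F_{ij}=a-b_i-b_j$, and $D\cdot G_i=2a-\sum_{j\neq i}b_j$. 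A short case check using $a\geq b_1+b_2+b_3$ and the sorted order shows that $b_6$ is the minimum (attained at $E_6$), that $D\cdot G_1=2a-\sum_{i=2}^6 b_i$ is the maximum, and more strongly that the four smallest values in $\{D\cdot L\}$ are $b_6\leq b_5\leq b_4\leq b_3$. The identity $D\cdot G_1=d-a+b_1$ is then immediate from $d=3a-\sum b_i$.

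Finally (b) follows by extracting each coefficient as an intrinsic invariant of $D$: by (d), $a=\min\{D\cdot T\}$; by (c) the unordered quadruple $\{b_3,b_4,b_5,b_6\}$ equals the four smallest values in $\{D\cdot L\}$; and then $\deg D=3a-\sum b_i$ and $D^2=a^2-\sum b_i^2$ determine the symmetric functions $b_1+b_2$ and $b_1^2+b_2^2$, hence the unordered pair $\{b_1,b_2\}$, which the convention $b_1\geq b_2$ orders. The main obstacle I expect is the termination argument in (a), since $a$ could in principle decrease through negative values; the lower bound coming from intersections with twisted cubics is essential. The orbit-by-orbit case analyses needed for (c) and (d) are routine but must be carried out in full to ensure no orbit is overlooked.
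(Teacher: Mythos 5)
Your proposal is correct, but parts (a) and (b) take a genuinely different route from the paper's. For (a) the paper does not use Cremona descent: it builds the six exceptional classes directly by a greedy choice of mutually skew lines --- take $E_6'$ with $D\cdot E_6'$ minimal among all $27$ lines, then $E_5'$ minimal among lines skew to $E_6'$, and so on --- and blows these down via \cite[V, 4.10]{AG}; the inequality $a\geq b_1+b_2+b_3$ then falls out because the line $F_{12}$ was still available when $E_3'$ was chosen, so $a-b_1-b_2=D\cdot F_{12}\geq D\cdot E_3'=b_3$. That construction has the advantage that (b) and the ``four smallest values'' assertion are nearly immediate: one only checks, using the table $D\cdot E_i=b_i$, $D\cdot F_{ij}=a-b_i-b_j$, $D\cdot G_j=2a-\sum_{i\neq j}b_i$, that the greedy recipe is forced by the standard-form inequalities, so it reproduces the same marking. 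Your descent argument instead must recover each coefficient as an intrinsic invariant of $D$ ($a$ from (d), the multiset $\{b_3,\dots,b_6\}$ from the smallest line intersections, and $\{b_1,b_2\}$ from $\deg D$ and $D^2$), which is a clean and valid alternative; in particular your termination argument is exactly right, since in every marking the class $(1;0,\dots,0)$ is one of the $72$ twisted cubic classes, so the first coordinate always lies in the fixed finite set $\{D\cdot T\}$ and the strictly decreasing sequence must stop. Parts (c) and (d) coincide with the paper's case-by-case computations. The one point to make explicit in your write-up is that the Cremona step is realized geometrically on $X$: the six new mutually skew $(-1)$-classes $\ell-e_2-e_3$, $\ell-e_1-e_3$, $\ell-e_1-e_2$, $e_4$, $e_5$, $e_6$ are classes of actual lines and can be simultaneously blown down, which is the same appeal to \cite[V, 4.10]{AG} that the paper makes.
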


\begin{proof}

(a) (cf. \cite[proof of V, 4.11]{AG}) Given $D$ choose six
mutually skew lines $E_1', \dots, E_6'$ as follows. Take $E_6'$ so
that $D.E_6'=\min\{D.L \mid L \textrm{ line on } X\}$. Choose
$E_5'$ such that $D.E_5'$ is minimum among lines that do not meet
$E_6'$. Choose $E_4'$ and $E_3'$ similarly such that $D-E_i'$ is
minimum among those lines not meeting any of the $E_j'$ already
chosen. There remain three lines not meeting any of
$E_3',E_4',E_5',E_6',$ one of which meets the other two. Take the
two that do not meet each other to be $E_1'$, $E_2'$ with
$D.E_1'\geq D.E_2'.$ Then $E_1',\dots E_6'$ being six mutually
skew lines, we can represent them as $E_1, \dots E_6$ for a
suitable projection $X \ra \PP^2$ \cite[V, 4.10]{AG}. We write $D=
al-\sum b_ie_i$ in this basis for statement (a). As $b_i=D.E_i$,
by the choices we have made we  have $b_1 \geq b_2 \geq \dots \geq
b_6$.  Note that $F_{12}$ was available (notation of \cite[V,
4.9]{AG}), not meeting $E_4,E_5,E_6$ at the time we chose $E_3$.
Therefore $D.F_{12}\geq D.E_3$, i.e $a-b_1-b_2 \geq b_3$. This
says $a\geq b_1+b_2+b_3$.

(b) Suppose a divisor $D$ is represented as $(a;b_1, \dots,b_6)$
as in (a) with $b_1 \geq \dots \geq b_6$ and $a \geq b_1+b_2+b_3$.
Then (using the notation $E_i$, $F_{ij}$, $G_i$ of \cite[V
4.9]{AG}) we have
$$
\left\{\begin{array}{lcl}
D.E_i & = & b_i\\
D.F_{ij}& =& a-b_i-b_j\\
D.G_j & = & 2a-\sum_{i\neq j} b_i
\end{array}
\right.
$$
From the condition $a \geq b_1+b_2+b_3$ it follows that the
minimum of $D.F_{ij}$ is $D.F_{12} \geq b_3.$ The maximum of the
$D.F_{ij}$ is $D.F_{56}=a-b_5-b_6.$ This is $\leq D.G_4=
2a-\sum_{i\neq 4}b_i$ since $a \geq b_1+b_2+b_3.$ Clearly $D.G_4
\leq D.G_3 \leq D.G_2 \leq D.G_1.$ Moreover $D.G_5 \geq D.G_6$
which is $\geq b_3$ because $a \geq b_1+b_2+b_3 \geq b_3+b_4+b_5$.
Thus the recipe of (a) will chose the same $E_i$, and we have the
same representation. Note that even though we chose $E_5'$ minimum
among the lines not meeting $E_6'$, and similarly $E_4', E_3'$, it
follows from the proof of (b) that $b_3, b_4, b_5, b_6$ are the
four smallest values of $\{D.L\}.$

(c) From the analysis just given,  $b_6=\min \{D.L\}$ and $D.G_1=
\max\{D.L\}.$ Also $D.G_1=2a- \sum_{i\neq 1}b_i=d-a+b_1$ since
$d=3a-\sum_{i=1}^6 b_i.$

(d) To compute $D.T$ for various twisted cubic curves $T$, we use
the notation in Example \ref{exrk1} above and their permutations.
Note that the basis for $\Pic X$ was chosen to make $D$ have
standard form, so the $T's$ need not be in standard form. Then
\begin{itemize}
 \item $D.T_A=a$
\item $D.T_B=2a-b_1-b_2-b_3 \geq a.$ If we use a permutation of $B$ we get $2a-b_i-b_j-b_k \geq a$ because $b_1 \geq b_2 \geq \dots  \geq b_6.$
\item $D.T_C=3a-2b_1-b_2-b_3-b_4-b_5$. Again, since $a\geq b_1+b_2+b_3$ and $a\geq b_1+b_4+b_5$ this is $\geq a$.
Moreover, this is the minimum of $D.T_C$ for any permutation of
$C$.
 \item $D.T_D=4a-2(b_1+b_2+b_3)-(b_4+b_5+b_6).$ Again, even after permutation of $T_D$ this is $\geq a$.
\item $D.T_E=5a-2\sum b_i \geq a.$
\end{itemize}
Thus $a=D.T_A=\min\{D.T\mid \textrm{ T twisted cubic curve}\},$
which proves (d).
\end{proof}

Next we describe an algorithm that will be used later in some
inductive proofs. Given a divisor $D$ of degree $3r$ we subtract a
suitable twisted cubic curve to get a new divisor $D'$ of degree
$3r-3$.

\begin{prop}\label{PropAlg} Let $D$ be a divisor on $X$ of degree  $3r$,
with $r\geq 2$, satisfying $0\leq D.L \leq 2r$ for all lines $L$.
Write $D$ as $(a; b_1, \dots,b_6)$ in standard form (see
Proposition \ref{Propstdform}). Then
\begin{enumerate}
 \item [(a)] There exists a twisted cubic curve $T$ with $D.T=a$ so that the
 new divisor $D'=D-T$ verifies  $0\leq D'.L \leq 2r-2$ for all lines $L$.
\item[(b)] Furthermore, assuming $r\geq 3$, if $a \geq 2r$ and $D $ is not a
multiple of $D_0=(4;2,1^4,0)$, then $D'$ is not a multiple $mD_0$
for any $m\geq 2$, and also $a'\geq 2r-2 $ where
$D'=(a';b_1',\dots,b_6')$ is in standard form.
\end{enumerate}
\end{prop}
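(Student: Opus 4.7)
Write $D = (a; b_1, \ldots, b_6)$ in standard form. The hypothesis $0 \leq D.L \leq 2r$ applied to $E_i, F_{ij}, G_i$ gives, via $\sum b_i = 3(a-r)$, the constraints $0 \leq b_i \leq a-r$ and $0 \leq a - b_i - b_j \leq 2r$; moreover the inequality $D.T_E = 6r - a \geq a$ (by minimality of $a$, Proposition~\ref{Propstdform}(d)) yields $a \leq 3r$. My strategy for both parts is to take $T = T_A$ (the class $l$) throughout, so that $D' = (a-1; b_1, \ldots, b_6)$.

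For part (a), the bounds $0 \leq D'.L \leq 2r - 2$ reduce to four extremal inequalities on $D$: $b_1 \leq 2r - 2$, $a - b_5 - b_6 \leq 2r - 1$, $a - b_1 - b_2 \geq 1$, and $D.G_6 \geq 2$. Each is proved by assuming the extremal violation and deriving a numerical contradiction. For instance, $a - b_5 - b_6 = 2r$ would give $b_5 + b_6 = a - 2r$ and $\sum_{i=1}^{4} b_i = 2a - r$; together with $b_1 + b_2 + b_3 \leq a$ and $b_i \leq a - r$, this squeezes $b_1 = b_2 = b_3 = b_4 = a - r$, whence $a = 3r/2$ and $b_5 + b_6 = -r/2$, impossible. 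The other three extremal cases are ruled out analogously, all using $r \geq 2$.

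For part (b), compute $a' = \min_T (D.T - T_A.T)$ over twisted cubic classes $T$, using $T_A.T \in \{1,2,3,4,5\}$ according to whether $T$ is of type $T_A, T_B, T_C, T_D, T_E$ (Example~\ref{exrk1}). Types $T_A, T_B$ give $D'.T \geq a - 2 \geq 2r - 2$ immediately from $a \geq 2r$. Types $T_D, T_E$ follow from the uniform lower bounds $D.T_D \geq 3r$ (since the sum of three $b_i$'s is at most $b_1+b_2+b_3 \leq a$) and $D.T_E = 6r - a \geq 3r$, combined with $r \geq 3$. The delicate case is $T_C$: $D.T_C = 3r - b_p + b_u$ equals $a = 2r$ only when $b_p - b_u = r$, which (since $b_p \leq a-r = r$ and $b_u \geq 0$) forces $b_p = r$ and $b_u = 0$, so $b_1 = r$ and $b_6 = 0$. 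A combinatorial chase using $b_2 + b_3 \leq r$ and $b_4 + b_5 \leq 2b_3$, together with $\sum b_i = 3r$, then pins $b_2 = b_3 = b_4 = b_5 = r/2$ and shows $r$ is even with $D = (r/2)D_0$, contradicting the hypothesis. Hence $D.T_C \geq 2r+1$ and $D'.T_C \geq 2r - 2$.

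Finally, if $D' = m D_0$ for some $m \geq 2$, then $D = T_A + m D_0$ in $\Pic X$ and $D.T_A = 1 + m \cdot D_0.T_A = 1 + 4m = 2r - 1$, contradicting $a = \min D.T \geq 2r$. The main obstacle is the $T_C$ case in part (b): one must recognize that, within the hypothesis $a \geq 2r$, the identity $D.T_C = 2r$ characterizes exactly the multiples of the exceptional class $D_0$.
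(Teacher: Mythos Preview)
Your single-choice $T=T_A$ approach is a genuine simplification of the paper's five-case analysis, and part~(a) goes through cleanly: the four extremal inequalities $b_1\le 2r-2$, $a-b_5-b_6\le 2r-1$, $a-b_1-b_2\ge 1$, and $D.G_6\ge 2$ are all correct and your contradiction arguments for them are valid. Likewise, your computation of $a'=\min_T D'.T$ via $D'.T=D.T-T_A.T$ and the type-by-type bounds is correct, including the key $T_C$ step identifying $D=(r/2)D_0$ as the only obstruction. So $a'\ge 2r-2$ is established.

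The gap is in your final step, showing $D'\ne mD_0$. You write $D.T_A=1+m\cdot D_0.T_A=1+4m$, but the value $D_0.T_A=4$ is not justified. Here $T_A=(1;0^6)$ lives in the standard basis for~$D$, while the representation $D_0=(4;2,1^4,0)$ is the standard form of $D_0$, which is in a (possibly different) basis. When $a>b_1+b_2+b_3$, your $D'=(a-1;b_1,\dots,b_6)$ \emph{is} still in standard form, so by uniqueness (Proposition~\ref{Propstdform}(b)) the two bases coincide and $D_0.T_A=4$ follows; your argument then works. But when $a=b_1+b_2+b_3$ --- exactly the case where the paper switches to $T_B$, $T_C$, or $T_E$ --- your $D'$ is \emph{not} in standard form in $D$'s basis, and $D_0.T_A$ can a priori be any value in $\{4,\dots,8\}$, so the equation $a=4m+1=2r-1$ collapses. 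The conclusion $D'\ne mD_0$ is still true in this case, but establishing it requires a separate argument (for instance, checking that if $D=mD_0+T_A$ with $T_A$ achieving $\min_T D.T$, then there is always another twisted cubic $T'$ with $D.T'<D.T_A$, contradicting minimality). The paper sidesteps this by choosing $T$ so that $D'$ stays in standard form in every case, making the comparison with $(4m;2m,m^4,0)$ direct.
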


\begin{proof}
We distinguish the following cases.

\underline{Case 1:} $a >b_1+b_2+b_3.$ Take $T=T_A=(1;0^6)$. Then
$D.T=a$ and $D'=D-T=(a-1;b_1, \dots,b_6).$ By hypothesis $a-1 \geq
b_1+b_2+b_3$,  so it is still standard form. Since $D.L\geq 0$ for
all $L$, we have $b_6 \geq 0$, so $D'.L\geq 0$ for all $L$ also.
To find the maximum of $D'.L$, we use Proposition
\ref{Propstdform}(c) to write it as $d'-a'+b_1'$ where  $d'=\deg
D'$. Now $D.L \leq 2r$ for all $L$, so $d-a+b_1\leq 2r.$ But
$d'=3r-3$, $a'=a-1$ and $b_1'=b_1$, so $d'-a'+b_1' \leq 2r-2$.

If $a \geq 2r$, then $a'=a-1 \geq 2r-1 \geq 2r-2$ as required.
Moreover if $a \geq 2r$ then $D'$ cannot be a multiple of $D_0$.
Indeed, if it were, say $D'=mD_0$ with $m\geq 2$, then $a-1=4m$,
and $\deg D'=3r-3=m\deg D_0=6m$, so $r=2m+1.$ But $a=4m+1 \geq
2r,$ a contradiction.

\underline{Case 2:} $a=b_1+b_2+b_3$ and either $b_2 >b_4$ or
$b_3>b_5.$ We take $T=T_B$, so $D.T=2a-\sum_1^3 b_i$. Since
$a=b_1+b_2+b_3$ this gives $D.T=a$. Write $D'=D-T_B=(a-2;
b_1-1,b_2-1,b_3-1,b_4,b_5,b_6).$ Then we reorder the six latter
numbers so that they are in descending order.

Case 2a. If $b_2>b_4$, then $b_4$ may move up to the third place
(in case $b_3=b_4$), but no further. So the first three $b_i'$
include $b_1-1,b_2-1$ and either $b_3-1$ or $b_4$. In either case
$a-2$ is $\geq$ their sum.

Case 2b. If $b_3>b_5$, then again $b_4$ may move up to the first
place (if $b_1=b_2=b_3=b_4$), but we still  have $b_1-1$, $b_2-1$
in the first three positions so again $a-2 \geq$ their sum.

Hence $D'$ is in standard form and since $b_6 \geq 0$, $D'.L \geq
0$ for all $L$. Moreover, $d-a+b_1 \leq 2r$ implies $d'-a'-b_1'
\leq 2r-2$ unless $3r-a+b_1=2r$ and $b_1=b_2=b_3=b_4=:b.$ In this
case $a=r+b.$ But $a=b_1+b_2+b_3=3b$, so $r=2b$. However this
contradicts the fact that degree of $D$ is equal to $3r=6b$ as the
degree is given by $3a-\sum b_i= 5b-b_5-b_6.$

Moreover, assume now that $a \geq 2r.$ Then $a'=a-2 \geq 2r-2$.
Furthermore $D'$ cannot be equal to $mD_0$ for any $m\geq 2$,
because then $D=(4m+2;2m+1,(m+1)^2,m^2,0),$ which is not in
standard form.

\underline{Case 3:} $a=b_1+b_2+b_3$, $b_2=b_3=b_4=b_5 >b_6$. Take
$T=T_C$ so that $D.T_C=3a-2b_1-\sum_2^5b_i=a$. Then
$D'=D-T_C=(a-3; b_1-2,b_2-1,b_3-1,b_4-1,b_5-1,b_6).$
 We first verify that $b_1 \geq 2$.
If not, then $D=(a;1^5,0)$ and $\deg D=3a-5$ which is not a
multiple of 3. Hence $b_1 \geq 2$ so all the new $b_i'$ are $\geq
0$. Rearrange $b_i$ in descending order: since $b_5>b_6$ we still
have $b_6$ at the last place. Hence the first three places will
contain $b_2-1,b_3-1,b_4-1$ or possibly $b_1-2$ in place of one of
these if $b_1>b_2+1$, so $a-3\geq b_1'+b_2'+b_3'$. Thus $D'$ is in
standard form, so $b_6'=b_6 \geq 0$ and hence $D'.L \geq 0$ for
any line $L$.

If $b_1 >b_2$ then $d'-a'+b_1'=3r-3-(a-3)+b_1-2$ so the condition
$D.L \leq 2r$ for all lines implies $D'.L \leq 2r-2$ for all
lines. If $b_1=b_2$ then $d'-a'+b_1'=3r-3-(a-3)+b_1-1$ so  $D'.L
\leq 2r-2$ also for all lines unless max $D.L=2r$. In that case
$b_1=b_2,$ $d-a+b_1=2r,$ and $D=(3b;b,b,b,b,b,b_6).$  Then $d=
2r+2b=3r,$ so $r=2b$ and $d=6b$. This implies $b_6=-2b$, a
contradiction.

Now if $a > 2r$, then $a'=a-3 \geq 2r-2$ and we conclude. If
$a=2r$, then $a'=2r-3$ so we have to prove that this case does not
happen. In this case we obtain $b_1=2r-2b$ for $b=b_2=\dots=b_5$
and $D$ has degree  $ 6r-(2r-2b)-4b-b_6$ which must be equal to
$3r$ so $r= 2b+b_6$. But $d-a+b_1 \leq 2r$, which holds only if $r
\leq 2b$. Therefore $b_6=0$, $r=2b$ and $D=(4b;2b,b^4,0)=bD_0,$
contrary to hypothesis.

Finally, if $D'=mD_0$ with $m\geq 2$, then $D=
(4m+3;2m+2,(m+1)^4,0)$ which cannot hold as $D$ is in standard
form.

\underline{Case 4:} $a=b_1+b_2+b_3$, $b_1>b_2=b_3=b_4=b_5
=b_6=:b.$ Notice that in this case we have $b\neq 0$ because a
divisor of type $(b_1;b_1,0^5)$ has degree $2b_1=3r$ and
$d-a+b_1=2b_1$ which is not $\leq 2r$.

As in case 3 we take $T=T_C$, thus $D.T_C=a$ as before. Then
$D'=D-T_C=(a-3;b_1-2,b-1,b-1,b-1,b-1,b)$. Now $b_6$ may move up,
even to the first place, but we still have $b_1-2$, $b-1$ among
the first three, so $a-3\geq$ the sum of the first three
positions, so $D'$ is in standard form. Since $b_6>0$, $D'.L \geq
0$ for all $L.$ Also $D'.L \leq 2r-2$ as in case 3.

If $D$ satisfies $a > 2r$ then $a'=a-3$ is $\geq 2r-2.$ The case
$a=2r$ cannot occur. Indeed, it implies $b_1=2r-2b$ and degree of
$D$ equal to $4r-3b$ so $r=3b.$ But then $D$ does not satisfy the
hypothesis $d-a+b_1 \leq 2r$ because $d-a+b_1=7b$.

Finally note that in this case $D$ and $D'$ are never multiples of
$D_0$ because $b_6=b_6'>0.$

\underline{Case 5:} $a=3b$ and all $b_i$ are equal to $b\geq 0$.
In this case $\deg D=3\cdot 3b-6b=3b$ so $r=b$ and $D=rH$ where
$H=(3;1^6)$ is the hyperplane class.

We take $T=T_E$ so $D.T_E=5\cdot3b-2\cdot 6\cdot b=3b=a$. Then
$D'=D-T_E=(3b-5; (b-2)^6)$. This is in standard form and it still
satisfies $D'.L\geq 0$ for all $L$. Also $\max
\{D'.L\}=d'-a'+b_1'= b \leq 2r.$ Note that $D$ and $D'$ are never
multiples of $D_0$ in this case. Moreover, assuming $r\geq 3$, as
$a=3r$ we have $a \geq 2r+3$ and so $a'=a-5\geq 2r-2$.
\end{proof}

\begin{teo}\label{condD}
Let $D$ be a divisor on $X$ and let $r \geq 1$ be an integer. Then
the following are equivalent
\begin{itemize}
 \item [(i)] $D$ is linearly equivalent to  a sum of twisted cubic curves $\sum_{i=1}^r T_i$.
\item [(ii)] There exists an Ulrich bundle of rank $r$ with first Chern class equal to $D$.
\end{itemize}
Moreover if $r\geq 2$ these two conditions are equivalent to
\begin{itemize}
\item [(iii)] $\deg D=3r$ and $0 \leq D.L \leq 2r$ for all lines
$L$ in $X$.
\end{itemize}
\end{teo}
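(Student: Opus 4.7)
The plan is to establish the chain (i) $\Rightarrow$ (ii) $\Rightarrow$ (iii) $\Rightarrow$ (i), noting that condition (iii) enters the picture only once $r\geq 2$. For (i) $\Rightarrow$ (ii), I would take the direct sum $\EE:=\bigoplus_{i=1}^{r}\OO_X(T_i)$: each summand is a rank 1 Ulrich bundle by Example \ref{exrk1}, and the defining properties of an Ulrich sheaf (being ACM, $h^0(\EE(-1))=0$, and $h^0(\EE)=3r$ with all generators in degree zero, cf.~Lemma \ref{lemaH0}) are all additive under direct sum, so $\EE$ is Ulrich of rank $r$ with $c_1(\EE)=D$. For $r=1$ the converse (ii) $\Rightarrow$ (i) is again Example \ref{exrk1}, which shows that every rank 1 Ulrich bundle is of the form $\OO_X(T)$ for some twisted cubic $T$.

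For $r\geq 2$, the implication (ii) $\Rightarrow$ (iii) is immediate from Proposition \ref{cor_nonsing}(a) and (c). The substantive direction is (iii) $\Rightarrow$ (i), which I plan to prove by induction on $r$. For the inductive step, given $r\geq 3$ and $D$ satisfying (iii), I would invoke Proposition \ref{PropAlg}(a), which produces a twisted cubic $T$ such that $D':=D-T$ has degree $3(r-1)$ and still satisfies $0\leq D'.L\leq 2(r-1)$ for every line $L$. Since $r-1\geq 2$, the induction hypothesis applies and decomposes $D'\sim \sum_{i=2}^{r}T_i$; adjoining $T$ yields $D\sim \sum_{i=1}^{r}T_i$ as required.

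For the base case $r=2$, I would appeal to the explicit classification carried out in Example \ref{exrk2}: the divisor classes of degree 6 in standard form on $X$ are exactly the six cases labelled $a$--$f$ there, and of these only case $c$ (which has $D.G_1=5>4$) fails to satisfy (iii); the remaining five classes $a,b,d,e,f$ each come with an explicit presentation $D\sim T_i+T_j$ as a sum of two twisted cubic classes in the third column of the table. The main obstacle therefore does not lie in the present theorem but in Proposition \ref{PropAlg}, whose careful case-by-case analysis in standard form does the real combinatorial work; with that proposition in hand, the theorem reduces to a clean induction together with the finite base check supplied by Example \ref{exrk2}.
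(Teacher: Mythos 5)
Your proposal is correct and follows essentially the same route as the paper: (i) $\Rightarrow$ (ii) by taking $\bigoplus_i\OO_X(T_i)$, (ii) $\Rightarrow$ (iii) from Proposition \ref{cor_nonsing}(a),(c), and (iii) $\Rightarrow$ (i) by induction on $r$ with Proposition \ref{PropAlg}(a) supplying the inductive step and the table of Example \ref{exrk2} (together with $\max\{D.L\}=6-a+b_1$) supplying the base case $r=2$. The only cosmetic difference is that you add the observation that case $c$ fails (iii) because $D.G_1=5>4$, which is a useful sanity check but not needed for the implication.
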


\begin{proof}
For $r=1$ we refer to Example \ref{exrk1}. For $r \geq 2$ we will
prove (i) $\Rightarrow$ (ii) $\Rightarrow$ (iii) $\Rightarrow$
(i).

(i) $\Rightarrow$ (ii.) Take the bundle $\EE= \oplus_i \OO(T_i).$

(ii) $\Rightarrow$ (iii). Follows from Proposition
\ref{cor_nonsing} a) and c).

(iii) $\Rightarrow$ (i). We proceed by induction on $r$. If $r=2$,
then $D$ has degree 6 and we just need to look at the table in
Example \ref{exrk2} to  note that any divisor $D$ satisfying
$0\leq D.L \leq 4$ is linearly equivalent to a sum of two twisted
cubic curves (use $\max\{D.L\}=6-a+b_1$). If $r > 2$, by
Proposition \ref{PropAlg} (a) we choose a twisted cubic curve $T$
so that $D'=D-T$ satisfies the same condition (iii). Therefore
$D'$ is itself linearly equivalent to a sum of $r-1$ twisted cubic
curves, so $D=T+D'$ is a sum of $r$ twisted cubic curves.
\end{proof}

\begin{rk}
\rm a) In particular, this theorem implies the existence of
orientable Ulrich bundles of rank $r\geq 2$ on the cubic surface,
that is, Ulrich bundles with $c_1=rH$. Indeed, $rH$ satisfies
condition (iii) of Theorem \ref{condD}. Note that in this proof we
did not need the minimal resolution conjecture for points on X
\cite[4.3]{CH2}, which was an essential ingredient of the
existence proof in our earlier paper \cite{CH2}.

b) Actually, the existence of orientable Ulrich bundles of rank
$r$ on the cubic surface $X$ is equivalent to the existence of
sets of $\frac{1}{2}(3r^2-r)$ points on $X$ satisfying the Minimal
Resolution Conjecture on $X$.

Indeed, it was proven in \cite[4.4(b)]{CH2} that if $Z$ is a set
of $\frac{1}{2}(3r^2-r)$ points on $X$ satisfying the Minimal
Resolution Conjecture,  then there exists an extension
\begin{equation}\label{extension}
0 \lra \OO_X^{r-1} \lra \EE \lra \JJ_Z(r) \lra 0
\end{equation}
where $\EE$ is an orientable Ulrich bundle of rank $r.$

Conversely, given an orientable Ulrich bundle $\EE$ of rank $r$,
by Proposition \ref{prop_irredfam} (a), there is an exact sequence
like (\ref{extension}). As $\EE$ is generated by its global
sections, $\JJ_Z(r)$ is also generated by global sections and
$h^0(\JJ_Z(r))=2r+1.$ On the other hand, $\EE$ has the following
minimal free resolution in $\PP^3$ (see \cite[3.7(b)]{CH2})
$$0 \lra \OO_{\PP^3}(-1)^{3r}\lra \OO_{\PP^3}^{3r}\lra \EE\lra 0.$$
Using the mapping cone procedure with the free resolutions of
$\OO_X$ and $\EE$ in sequence (\ref{extension}) we get the
following free resolution of $\JJ_Z(r)$
$$0\lra \OO_{\PP^3}(-3)^{r-1}\lra \OO_{\PP^3}(-1)^{3r} \oplus \OO_{\PP^3}^{r-1}\lra \OO_{\PP^3}^{3r}\lra \JJ_Z(r)\lra 0.$$
The terms $\OO_{\PP^3}^{r-1}$ can be split off because $\JJ_Z(r)$
is generated by global sections and $h^0(\JJ_Z(r))=2r+1.$
Therefore $\JJ_Z$ has the minimal free resolution predicted by the
Minimal Resolution Conjecture.

c) Using this, we obtain a new proof for the Minimal resolution
conjecture for general sets of $\frac{1}{2}(3r^2-r)$ points on
$X$, which was firstly proven in \cite{Cmrc}.
\end{rk}

\section{Stable Ulrich bundles on the cubic surface}
In this section we prove our main theorem, characterizing those
divisors $D$ on the cubic surface $X$ associated to a stable
Ulrich bundle of rank $r$. From the previous section we already
know the existence of some Ulrich bundles, those of rank 1 being
trivially stable. To show the existence of stable bundles of
higher ranks, we proceed inductively. Using extensions of stable
bundles of lower rank we can construct \textit{simple} Ulrich
bundles. These belong to a modular family whose dimension we can
compute. Then we show that the possible non-stable simple bundles
form families of lower dimension, so that the general bundle of
the modular family must be stable.

We start with rank 2, which is elementary (see Example
\ref{exrk2_2} below). For rank $r\geq 3$ we use the inductive
procedures of the previous section, expressing the divisor $D$ as
$D'+ T$ for a suitable twisted cubic curve $T$, and using
induction on $D'$. The computation of dimensions is achieved using
the computation of Euler characteristics done earlier (Corollary
\ref{corchi}). We also give a second proof, modeled on the ideas
of the original proof in our earlier paper \cite[5.7]{CH2}, that
does not use modular families.

As an example, we list all the divisors that correspond to rank 3
Ulrich bundles, showing which ones are stable and the dimensions
of the families. A curious byproduct of our investigation is that
when there are no stable Ulrich bundles, it is not because there
are too few semistable bundles: rather there are too many! In
these cases there are ``oversize" families of \textit{polystable}
bundles (meaning direct sums of two or more stable bundles) of
dimension bigger than the expected dimension of the moduli of
stable bundles.

\begin{ex}\label{exrk2_2}\rm\textbf{Rank 2 Ulrich bundles on the cubic surface
(cont.)}. Here we recover the results of Faenzi \cite{Faenzi} on
stable  Ulrich bundles of rank 2. We have seen in Example
\ref{exrk2} that the only possibilities for stable Ulrich bundles
of rank 2 are cases $d$, $e$, $f.$ To show the existence of stable
bundles in these cases, we consider extensions of line bundles
corresponding to twisted cubic curves
$$0 \lra \OO(T_1) \lra \EE \lra \OO(T_2) \lra 0$$
where $(T_1,T_2)=(T_A,T_C)$, $(T_B,T_{B'})$, $(T_A,T_E)$ for cases
$d,e,f$ respectively. Since $\OO(T_1)\ncong \OO(T_2)$, $
h^0(\OO(T_1-T_2))= 0,$ $h^2(\OO(T_1-T_2)) \perp
h^0(\OO(T_2-T_1-H))=0$, so $h^1(\OO(T_1-T_2))=$ $\dim
\Ext^1(\OO(T_2),\OO(T_1))=-\chi(\OO(T_1) \otimes
\OO(T_2)^{\vee})$, which is equal to $2\cdot1\cdot1-T_1.T_2$
according to Corollary \ref{corchi}. Now $T_A.T_C=3$,
$T_B.T_{B'}=4$, $T_A.T_E=5$, so $\dim \Ext^1(\OO(T_2),\OO(T_1))=$
1, 2, 3 respectively. Therefore there exist non-split extensions,
and these bundles are necessarily Ulrich, being extensions of
Ulrich bundles. Furthermore they are simple because of Lemma
\ref{lemsimple} below.

Thus in cases $d$, $e$, $f$ there are simple bundles, and we can
look at a modular family of simple bundles. Its dimension is
computed as $\dim \Ext^1(\EE,\EE)$ $=-\chi(\EE\otimes
\EE^{\vee})+1=D^2-7,$ which gives dimension 1, 3, 5 respectively
in these cases. On the other hand, the dimension of the family of
simple extensions of $\OO(T_2)$ by $\OO(T_1)$ constructed above is
$\dim \Ext^1(\OO(T_2),\OO(T_1))-1$ which we found above to be 0,
1, 2 in these cases. Since any non-stable simple Ulrich bundle
$\EE$ must be an extension of rank 1 Ulrich bundles, it will be an
extension as above for some $T_1$ and $T_2$. Now
$c_1(\EE)=D=T_1+T_2,$ and $D^2=T_1^2+2T_1.T_2+T_2^2 = 8,$ 10, 12
in cases $d$, $e$, $f$, respectively, so $T_1.T_2=3,$ 4, 5
respectively, and from the previous dimension count we conclude
that the remaining bundles in the larger modular families must be
stable. This explains the last two columns of Example \ref{exrk2}.
\end{ex}

\begin{lema}\label{lemsimple}
On a nonsingular projective variety $X$, let
$$0 \lra \FF \lra \EE \lra \GG \lra 0$$
be a non-split extension of non-isomorphic $\mu$-stable vector
bundles $\FF$, $\GG$ of the same slope. Then $\EE$ is a simple
vector bundle.
\end{lema}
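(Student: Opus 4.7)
The plan is to take an arbitrary $\varphi\in\End(\EE)$ and show $\varphi=\lambda\cdot\mathrm{id}_\EE$ for some $\lambda\in k$. The essential preliminary is the Hom-vanishing $\Hom(\FF,\GG)=0=\Hom(\GG,\FF)$ for non-isomorphic $\mu$-stable bundles of the same slope. To prove this, given a nonzero $f:\FF\to\GG$, I would first rule out a nontrivial kernel: if $\ker f$ had positive rank, then $\mu$-stability of $\FF$ would force $\mu(\ker f)<\mu(\FF)$ and hence $\mu(\im f)>\mu(\FF)=\mu(\GG)$, which is incompatible with $\im f\subseteq\GG$ (via $\mu$-stability of $\GG$ when $\rg\im f<\rg\GG$, or via the torsion cokernel inequality $c_1(\im f)\leq c_1(\GG)$ when $\rg\im f=\rg\GG$). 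So $f$ must be injective. If $\rg\FF<\rg\GG$, then $\im f$ is a proper subsheaf of $\GG$ of the same slope, violating $\mu$-stability of $\GG$. If $\rg\FF=\rg\GG$, then $\det f$ is a nonzero section of $\det\GG\otimes(\det\FF)^{\vee}$, a line bundle of degree zero; ampleness of $H$ forces the associated effective divisor to vanish, so $\det f$ is nowhere zero and $f$ itself is an isomorphism, contradicting $\FF\not\cong\GG$.

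Given the Hom-vanishing, I would then let $\varphi\in\End(\EE)$ be arbitrary and denote by $\pi:\EE\twoheadrightarrow\GG$ the quotient map. The composition $\FF\hookrightarrow\EE\stackrel{\varphi}{\lra}\EE\stackrel{\pi}{\lra}\GG$ is an element of $\Hom(\FF,\GG)=0$, so $\varphi(\FF)\subseteq\FF$. Since $\FF$ is $\mu$-stable, hence simple, $\varphi|_\FF$ equals $\lambda\cdot\mathrm{id}_\FF$ for a unique scalar $\lambda$.

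Finally, I would replace $\varphi$ by $\varphi-\lambda\cdot\mathrm{id}_\EE$, which now vanishes on $\FF$ and therefore factors as $\psi\circ\pi$ for a unique $\psi:\GG\to\EE$. The composition $\pi\circ\psi\in\End(\GG)$ is a scalar $\mu$. If $\mu\neq 0$, then $\mu^{-1}\psi$ splits the given extension, contradicting the non-split hypothesis; hence $\mu=0$, which means $\psi$ factors through $\ker\pi=\FF$, giving an element of $\Hom(\GG,\FF)=0$. Thus $\psi=0$ and $\varphi=\lambda\cdot\mathrm{id}_\EE$, proving that $\EE$ is simple. The only step requiring any care is the Hom-vanishing lemma in the equal-rank case, handled by the determinant argument above; the rest is formal diagram chasing using the simplicity of $\FF$ and the non-split hypothesis.
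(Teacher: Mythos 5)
Your proof is correct and follows essentially the same route as the paper's: both hinge on the vanishing $\Hom(\FF,\GG)=\Hom(\GG,\FF)=0$ for non-isomorphic $\mu$-stable bundles of equal slope, the simplicity of $\FF$ and $\GG$, and the non-split hypothesis to rule out the induced map on $\GG$ being invertible or zero. The differences are cosmetic --- you argue directly that every endomorphism is scalar (subtracting $\lambda\cdot\mathrm{id}$) where the paper derives a contradiction from a rank-deficient endomorphism, and you additionally supply a proof of the Hom-vanishing that the paper simply invokes as standard.
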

\begin{proof}
If $\EE$ is not simple, then there is a map $\EE
\srel{\al}{\lra}\EE$ of rank less than $\rg \EE$. If $\al(\FF)=0$,
then $\al$ factors through $\GG$, and the map $\EE \lra \GG$
splits. Therefore $\al(\FF) \neq 0$. The composed map $\FF \lra
\EE \srel{\al}{\lra}\EE \lra \GG$ must be zero since $\FF, \GG$
are stable, not isomorphic, and of the same slope. Therefore
$\al(\FF) \subset \FF$ and so $\al{\mid}_{\FF}$ must be an
isomorphism, since $\FF$ is stable. Therefore $\al$ induces a map
from $\GG$ to $\GG$. If this is 0 then $\al$ maps $\EE$ to $\FF$
and the sequence splits again. Hence the map on $\GG$ is an
isomorphism and so $\al$ is an isomorphism, contradicting $\rg \al
<\rg \EE.$
\end{proof}

\begin{teo}\label{teo_stable}
Let $D$ satisfy the equivalent conditions of Theorem \ref{condD}
with $r \geq 2$. Then the following are equivalent.
\begin{itemize}
\item[(i)]  $D.T \geq 2r$ for all twisted cubic curves $T$, but $D\neq mD_0$ for any $m\geq 2$, where $D_0=(4; 2, 1^4,0)$.
\item[(ii)] There exist stable Ulrich bundles corresponding to $D$.
\end{itemize}
Moreover, if $D$ satisfies these conditions, then the moduli space
$M_X^s(r;c_1,c_2)$ of stable vector bundles of rank $r$ on $X$
with $c_1=D$ and $c_2=\frac{1}{2}(D^2-r)$ is smooth and
irreducible of dimension $D^2-2r^2+1$ and consists entirely of
stable Ulrich bundles.
\end{teo}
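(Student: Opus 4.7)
The plan is to prove the equivalence (i) $\Leftrightarrow$ (ii) in two directions and then extract the moduli statements as byproducts. For the forward direction (ii) $\Rightarrow$ (i), given a stable Ulrich bundle $\EE$ with $c_1(\EE) = D$ and any twisted cubic $T \subset X$, the rank-$1$ Ulrich bundle $\OO_X(T)$ has the same slope $3$ as $\EE$, so by Theorem \ref{teo_semist}(c) any nonzero map $\OO(T) \to \EE$ would destabilize $\EE$; hence $H^0(\EE(-T)) = 0$. Serre duality together with Lemma \ref{lemaHn}(iv) reduces the vanishing of $H^2(\EE(-T))$ to a parallel $\mu$-stability argument for the stable Ulrich bundle $\EE^\vee(2)$. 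Corollary \ref{corchi} then gives $\chi(\EE(-T)) = 2r - D.T \leq 0$, i.e., $D.T \geq 2r$. The exclusion $D \neq mD_0$ for $m \geq 2$ must be deduced from the dimension count below: for such $D$, the family of extensions with subsheaf of rank $2$ and $c_1 = D_0$ will be shown to exhaust the entire modular family of simple Ulrich bundles.

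For sufficiency (i) $\Rightarrow$ (ii), I would induct on $r$. The case $r = 2$ is Example \ref{exrk2_2}. For $r \geq 3$, Proposition \ref{PropAlg}(b) produces a twisted cubic $T$ with $D.T = a$ such that $D' := D - T$ satisfies all rank-$(r-1)$ hypotheses: $0 \leq D'.L \leq 2(r-1)$ for all lines (Proposition \ref{PropAlg}(a)), $D' \neq mD_0$ for $m \geq 2$, and $a' \geq 2(r-1)$. The inductive hypothesis yields a stable Ulrich bundle $\FF$ of rank $r-1$ with $c_1(\FF) = D'$. Applying the same $\mu$-stability vanishings to $\FF$, Corollary \ref{corchi} gives $\dim \Ext^1(\OO(T), \FF) = D'.T - 2(r-1) = a - 2r + 1 \geq 1$, so non-split extensions $0 \to \FF \to \EE \to \OO(T) \to 0$ exist. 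Each such $\EE$ is Ulrich (as an extension of Ulrich bundles) and simple by Lemma \ref{lemsimple}, hence lies in the smooth modular family of simple Ulrich bundles of dimension $D^2 - 2r^2 + 1$ by Corollary \ref{cordim} and Remark \ref{Rem_modfam}.

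To extract a stable point from this modular family, I would bound the locus of non-stable simple bundles. Any such $\EE$ fits by Theorem \ref{teo_semist}(b) in $0 \to \FF \to \EE \to \GG \to 0$ with $\FF, \GG$ Ulrich of ranks $s, t = r - s$ and $c_1(\FF) = C$. Combining Corollary \ref{corchi} with the $\mu$-stability vanishings $\Hom(\GG, \FF) = 0$ and (by Serre duality) $\Hom(\FF, \GG(-H)) = 0$ for the simple Ulrich factors gives $\dim \Ext^1(\GG, \FF) = C.(D-C) - 2st$, and the dimension of the resulting family of extensions works out to
\[
D^2 - 2r^2 + 1 - \bigl(C.(D-C) - 2st\bigr).
\]
The hypotheses (i), in particular $D \neq mD_0$, must then be used to establish $C.(D-C) > 2st$ for every admissible $(s, t, C)$; the case $s = 1$ (so $C$ a twisted cubic) follows at once from $D.T \geq 2r$. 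Since only finitely many decompositions arise from bounded families of Ulrich bundles, the union of non-stable loci has strictly smaller dimension than the modular family, so a general point is stable.

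For the moduli statements: smoothness at a stable Ulrich point follows from $\Ext^2(\EE, \EE) = 0$, via another $\mu$-stability argument applied to $\EE^\vee(2)$; the dimension formula and irreducibility of the Ulrich locus come from Corollary \ref{cordim} and Corollary \ref{cor_modspace}. Any stable bundle with the prescribed Chern classes satisfies $\chi(\EE) = 3r$ by Riemann--Roch and $H^0(\EE(-1)) = 0$ by $\mu$-stability (no map $\OO(H) \to \EE$), so an inductive application of Lemma \ref{lemaH0} on a general hyperplane section forces $\EE$ to be Ulrich, giving $M^s = M^{s,U}$. I expect the main technical obstacle to be verifying $C.(D-C) > 2st$ for every decomposition with $s \geq 2$: this is precisely where the exclusion $D \neq mD_0$ must enter, and the analysis will likely parallel the case-by-case standard-form computation underlying Proposition \ref{PropAlg}.
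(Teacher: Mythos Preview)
Your construction of simple bundles via extensions and the dimension formula for the modular family are correct, and your argument for $D.T \geq 2r$ in (ii) $\Rightarrow$ (i) via $\mu$-stability and Corollary \ref{corchi} is clean. But the heart of your proposal for (i) $\Rightarrow$ (ii)---bounding the locus of \emph{all} non-stable simple bundles by checking $C.(D-C) > 2st$ for every admissible splitting $D = C + (D-C)$---is both different from the paper's route and genuinely incomplete. You acknowledge this inequality as the main obstacle, and indeed it is not obvious how to establish it uniformly (and your dimension count also silently assumes the factors $\FF,\GG$ are themselves stable and non-isomorphic, which a priori they need not be in a general destabilizing sequence).

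The paper sidesteps this entirely by a specialization argument. Having built one simple bundle $\EE$ as an extension $0 \to \OO(T) \to \EE \to \FF \to 0$ with $\FF$ stable of rank $r-1$, it observes (citing \cite[2.3.1]{HL}) that no other semistable splitting type can specialize to this one. Hence if the \emph{general} member of the (irreducible) modular family were non-stable, it would have to be of exactly this same type, and one only needs the single dimension bound $\dim\{\FF\} + \dim\Ext^1(\FF,\OO(T)) - 1 = D^2 - 2r^2 - a + 2r < D^2 - 2r^2 + 1$, which follows immediately from $a \geq 2r$. This is what buys the paper its economy: you never have to control arbitrary rank-$s$ subbundles.

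For (ii) $\Rightarrow$ (i), the paper again takes a different route: rather than deducing $D \neq mD_0$ from an extension count, it shows directly that when $a < 2r$ or $D = mD_0$ there exist \emph{polystable} families of dimension $\geq D^2 - 2r^2 + 1$, and since $M^{ss,U}$ is irreducible with stable locus (if nonempty) open of dimension exactly $D^2 - 2r^2 + 1$, this forces the stable locus to be empty. Finally, your sketch that every stable bundle with the given Chern classes is Ulrich (``inductive application of Lemma \ref{lemaH0} on a hyperplane section'') does not address the ACM vanishing $H^1_*(\EE)=0$; the paper proves this in a separate lemma via stability of $\EE$ and $\EE^\vee(2)$ together with Castelnuovo--Mumford regularity.
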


\begin{proof}

(i) $\Rightarrow$ (ii). By induction on $r$. For $r=2$, this
follows by inspection of the table of degree 6 curves in Example
\ref{exrk2}.

For $r\geq 3$, we use Proposition \ref{PropAlg} to choose a
twisted cubic curve $T$ such that $D'=D-T$ satisfies the same
conditions as $D$, namely $a'=\min\{D'.T\} \geq 2r-2$, and $D'
\neq m'D_0$ for any $m'\geq 2$. Then by induction, there exist
stable Ulrich bundles $\FF$ corresponding to $D'$. We consider
extensions
$$0 \lra \OO(T) \lra \EE \lra \FF \lra 0.$$
First we use Corollary \ref{corchi} to compute
$\chi(\FF^{\vee}(T))=2(r-1)-D'.T.$ Since $D'.T=a-1$ (by
Proposition \ref{PropAlg} we have $D.T=a$, so $D'.T=a-T^2=a-1$)
and $a \geq 2r$, this is strictly negative , and so there are
non-split extensions in $\Ext^1(\FF, \OO(T))$. These are simple by
Lemma \ref{lemsimple}. Moreover $H^0(\FF^{\vee}(T))=0$ and
$H^2(\FF^{\vee}(T))=0$ because $\FF$ is stable, and thus the
dimension of $\Ext^1(\FF, \OO(T))$ is equal to $a+1-2r> 0$.

Consider the modular family of simple bundles $\EE$ (see Remark
\ref{Rem_modfam}). We can compute its dimension as $\dim H^1(\EE
\otimes \EE^{\vee})=D^2-2r^2+1$ (Corollary \ref{cordim}). If the
general member of this modular family is not stable, then it must
be of the same type as $\EE$ above, namely an extension of a
stable rank $r-1$ bundle by a stable line bundle $\OO(T)$, because
no other semistable splitting type can specialize to this one (see
\cite[2.3.1]{HL}).

On the other hand, the dimension of the family of simple bundles
obtained by this construction is
$$\dim \{\FF\}+\dim \Ext^1(\FF,\OO(T))-1=(D')^2-2(r-1)^2+1+a-2r.$$

Since  $D'=D-T$ and $D.T=a$ by Proposition \ref{PropAlg}, this
number is
\begin{equation*}
D^2-2a+1-2r^2+4r-2+1+a-2r= D^2-2r^2-a+2r.
\end{equation*}
Since $a \geq 2r$, this number is strictly less than $D^2-2r^2+1$.
Thus the general bundle in the modular family of simple bundles
must be stable.

(ii) $\Rightarrow$ (i). For this implication we will show that if
$a<2r$, or if $D=mD_0$ for some $m\geq 2$, then there exists a
family of properly semistable polystable  bundles of dimension
$\geq D^2-2r^2+1.$ In this case there cannot be stable bundles,
because the moduli space $M^{ss,U}$ is irreducible (see Corollary
\ref{cor_irred}), and the stable bundles, if they exist, fix its
dimension as $D^2-2r^2+1$. In that case the polystable bundles
would have dimension $\leq D^2-2r^2$ in the moduli space.

We assume $a<2r$, or $D=mD_0$ for some $m\geq 2$ and proceed again
by induction on $r$. For $r=2$, looking at the table of rank 2
bundles in Example \ref{exrk2}, we have $D=2T_A$ or $D=T_A+T_B.$
These give polystable bundles, forming families of dimension 0,
while $D^2-2r^2+1$ is $D^2-8+1$ which is either -3 or -1.

For $r \geq 3$ and $a<2r$, we choose $T$ as in Proposition
\ref{PropAlg} and let $D'=D-T.$ If $a' \geq 2r-2$ then by the
above implication (i) $\Rightarrow$ (ii) there is a family of
Ulrich stable bundles $\FF$ of rank $r-1$ and dimension
$D'^2-2(r-1)^2+1.$ We take $\OO(T) \oplus \FF$ as polystable
bundles and thus we obtain a family of polystable bundles of this
same dimension which is $D^2-2r^2+4r-2a.$ On the other hand, if
$a' <2r-2$ or if $D'=m'D_0$ for some $m'\geq 2,$ then by
induction, there exists a family of polystable bundles for $D'$ of
dimension at least of this same dimension $D^2-2r^2+4r-2a$. Taking
the direct sum with $\OO(T)$ gives polystable bundles for $D$ of
at least the same dimension. Now since $a<2r$, $a \leq 2r-1$, and
this family has dimension $\geq D^2-2r^2+2.$

Finally, consider the case $D=mD_0$ with $m\geq 2$. Since $D_0$
corresponds to a 1-dimensional family of rank 2 bundles (see case
$d$ in the table in Example \ref{exrk2}), by taking direct sums of
$m$ of them we obtain an $m$-dimensional family of polystable
bundles. On the other hand, since $D_0^2=8$ and $r=2m$, in this
case $D^2-2r^2+1=8m^2-8m^2+1=1,$ so again there are no stable
bundles.

For the last statement, we recall that the Ulrich bundles form an
irreducible family (see Corollary \ref{cor_irred}) and the
corresponding moduli space is smooth if dimension $D^2-2r^2+1$
(see Corollary \ref{cordim}). Thus we have only to show that any
other stable bundle of the same rank with the same Chern classes
is an Ulrich bundle. Since the Hilbert polynomial depends only on
the Chern classes, this will be a consequence of the following
lemma, using Lemma \ref{lemaHilbpoly}.
\end{proof}

\begin{lema}
On a nonsingular cubic surface $X$, any stable vector bundle $\EE$
of rank $r$ with Hilbert polynomial
$P_{\EE}(n)=3r{{n+2}\choose{2}}$ is an Ulrich bundle, and
therefore also $\mu$-stable.
\end{lema}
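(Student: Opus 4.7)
The plan is to verify the cohomological vanishings characterizing an Ulrich bundle on $X$, starting from the constraints imposed by the Hilbert polynomial and then supplementing with stability arguments. First, from $P_\EE(n) = 3r\binom{n+2}{2}$ one reads off $\chi(\EE(-1)) = \chi(\EE(-2)) = 0$ and $\chi(\EE) = 3r$; comparing with Riemann-Roch on the cubic surface (with $K_X = -H$ and $H^2 = 3$), the coefficient of $n$ forces $c_1(\EE) \cdot H = 3r$, so $\mu(\EE) = 3 = \mu(\OO_X(1))$.

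Next I would establish the four vanishings $h^0(\EE(-j)) = h^2(\EE(-j)) = 0$ for $j = 1, 2$, from which $h^1(\EE(-j)) = 0$ follows by the Euler characteristic. For $h^0(\EE(-1)) = 0$: any nonzero morphism $\OO_X \to \EE(-1)$ has torsion-free image on the integral variety $X$, hence is injective, producing $\OO_X(1) \hookrightarrow \EE$; but a direct computation gives $P_{\OO_X(1)}(n) = P_\EE(n)/r + 1$, contradicting Gieseker stability of $\EE$. Symmetrically, $h^2(\EE(-2)) = h^0(\EE^\vee(1)) = 0$ reduces to $\OO_X(-1) \hookrightarrow \EE^\vee$, and one checks $P_{\OO_X(-1)}(n) = P_{\EE^\vee}(n)/r + 1$, contradicting Gieseker stability of $\EE^\vee$. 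The remaining $h^0(\EE(-2)) = 0$ and $h^2(\EE(-1)) = h^0(\EE^\vee) = 0$ follow immediately from $\mu$-semistability of $\EE$ and $\EE^\vee$, since $\mu(\OO_X(2)) = 6 > 3$ and $\mu(\OO_X) = 0 > -3$.

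Third, for a general hyperplane section $H$ (a smooth plane cubic, $d = 3$, $g = 1$), the sequence $0 \to \EE(-2) \to \EE(-1) \to \EE_H(-1) \to 0$ combined with the vanishings above forces $h^0(\EE_H(-1)) = 0$; since $\deg \EE_H = c_1(\EE) \cdot H = 3r = r(d+g-1)$, Proposition \ref{prop1curves} yields that $\EE_H$ is Ulrich on $H$. I would then propagate vanishing of $H^1(\EE(n))$ from the known $h^1(\EE(-1)) = h^1(\EE(-2)) = 0$ via $0 \to \EE(n-1) \to \EE(n) \to \EE_H(n) \to 0$: for $n \geq 0$, $h^1(\EE_H(n)) = 0$ because $\EE_H$ is Ulrich on a curve, while for $n \leq -3$, $h^0(\EE_H(n+1)) = 0$ by $\mu$-semistability of $\EE_H$ on the curve. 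Hence $\EE$ is ACM.

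With $\EE$ ACM, $h^0(\EE(-1)) = 0$, and $h^0(\EE) = \chi(\EE) - h^1(\EE) + h^2(\EE) = 3r$ (using $h^2(\EE) = h^0(\EE^\vee(-1)) = 0$ again by $\mu$-semistability), Lemma \ref{lemaH0} yields that $\EE$ is Ulrich, and Theorem \ref{teo_semist}(c) then gives $\mu$-stability. The main obstacle is verifying $h^2(\EE(-2)) = 0$: Gieseker stability of $\EE$ alone does not rule this out (a surjection $\EE \twoheadrightarrow \OO_X(1)$ is consistent with stability by a direct lower-order-term calculation on the kernel), so the argument relies essentially on the standard fact that duality preserves Gieseker stability for locally free sheaves on a smooth projective variety.
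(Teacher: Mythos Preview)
Your proof is correct. The overall architecture matches the paper's: use stability (of $\EE$ and of its dual) to force $h^0$ and $h^2$ of $\EE(-1)$ and $\EE(-2)$ to vanish, deduce $h^1(\EE(-1))=h^1(\EE(-2))=0$ from $\chi=0$, and then propagate to all twists. The propagation step is where you diverge. The paper observes directly that $\EE^{\vee}(2)$ is again a stable bundle with the same Hilbert polynomial $3r\binom{n+2}{2}$, so the whole argument is symmetric under $\EE\leftrightarrow\EE^{\vee}(2)$; it then invokes Castelnuovo--Mumford regularity once (from $h^1(\EE(-1))=h^2(\EE(-2))=0$) to kill $h^1(\EE(n))$ for $n\ge -2$, and the symmetry handles $n\le -1$. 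You instead pass to a general hyperplane section, identify $\EE_H$ as Ulrich on the elliptic curve via Proposition~\ref{prop1curves}, and push $H^1$-vanishing up and down through the restriction sequence using the known cohomology of $\EE_H$. Both routes are short; the paper's is slightly cleaner because the self-duality $\EE\mapsto\EE^{\vee}(2)$ cuts the work in half, while yours has the virtue of making explicit that the result is ultimately controlled by the curve case. Your closing remark is apt: the one place where Gieseker stability of $\EE$ alone is genuinely insufficient is $h^2(\EE(-2))=0$, and both proofs resolve it the same way, by passing to the dual.
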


\begin{proof} Since $\frac{1}{r}P_{\EE(-1)}(n)=3{{n+1}\choose{2}}$
and $P_{\OO_X}(n)=3{{n+1}\choose{2}}+1,$ it follows from the
stability of $\EE$ that  $h^0(\EE(-1))=0.$ Hence $h^0(\EE(n))=0$
for all $n<0.$

Next, we note that
$\chi(\EE^{\vee}(n+2))=\chi(\EE(-n-3))=3r{{-n-1}\choose{2}}=3r{{n+2}\choose{2}},$
so $\EE^{\vee}(2)$ is also stable satisfying the same hypothesis
as $\EE.$ It follows that $h^0(\EE^{\vee}(n))=0$ for all $n \leq
1.$ By duality, this shows $h^2(\EE(n))=0$ for all $n \geq -2.$

Now $\chi(\EE(n))=0$ for $n=-1,-2,$ so it follows that
$h^1(\EE(n))=0$ for $n=-1,-2.$ Then by Castelnuovo-Mumford
regularity, $\EE$ is regular, so $h^1(\EE(n))=0$ for all $n\geq
-2.$ The same argument applies to $\EE^{\vee}(2)$ so
$h^1(\EE^{\vee}(n))=0$ for all $n\geq 0.$ By duality this implies
$h^1(\EE(n))=0$ for all $n\leq -1.$ Thus $h^1(\EE(n))=0$ for all
$n$, and $\EE$ is an ACM sheaf.

Finally, since $h^i(\EE)=0$ for $i=1,2$ and $\chi(\EE)=3r,$ we
find $h^0(\EE)=3r,$ so $\EE$ is Ulrich by Lemma \ref{lemaH0},
hence also $\mu$-stable by Theorem \ref{teo_semist}.
\end{proof}

We thus obtain a new proof for the following statement of our
previous paper \cite[5.7]{CH2}:
\begin{cor}\label{cor_orient}
There exist stable orientable Ulrich bundles of every rank $r \geq
2$ on the cubic surface $X$ and their moduli space is smooth and
irreducible of dimension $r^2+1.$
\end{cor}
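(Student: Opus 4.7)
The plan is to derive this statement as a direct specialization of Theorem \ref{teo_stable} to the divisor $D = rH$, where $H$ is the hyperplane class. The whole argument reduces to checking that $D = rH$ satisfies the numerical hypotheses (iii) of Theorem \ref{condD} and (i) of Theorem \ref{teo_stable}, and then reading off the dimension formula.

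First, I would verify that $D = rH$ satisfies the hypotheses of Theorem \ref{condD}. We have $\deg(rH) = r \cdot H^2 = 3r$. For every line $L$ on $X$ we have $L.H = 1$, so $D.L = r$, which clearly lies in the interval $[0, 2r]$. Thus by Theorem \ref{condD}, $rH$ is the first Chern class of some Ulrich bundle of rank $r$.

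Next, I would verify the two conditions in Theorem \ref{teo_stable}(i). For any twisted cubic curve $T$ on $X$ we have $T.H = 3$, hence $D.T = 3r \geq 2r$ since $r \geq 2$. For the exclusion condition, using the standard form notation $H = (3; 1^6)$ and $D_0 = (4; 2, 1^4, 0)$, an identity $rH = mD_0$ would require $3r = 4m$ and $r = 2m$ from matching coordinates, which forces $m = 0$; thus $rH$ is not of the excluded form for any $m \geq 2$.

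Applying Theorem \ref{teo_stable}, I conclude that stable Ulrich bundles of rank $r$ with $c_1 = rH$ exist, and that the moduli space $M_X^s(r; c_1, c_2)$ is smooth and irreducible of dimension
\[
D^2 - 2r^2 + 1 \;=\; (rH)^2 - 2r^2 + 1 \;=\; 3r^2 - 2r^2 + 1 \;=\; r^2 + 1,
\]
which is precisely the statement of the corollary. Since the argument is purely a verification of the numerical hypotheses of an already-proven theorem, there is no substantive obstacle; the only item that requires a moment's attention is the non-degeneration check $rH \neq mD_0$, which is immediate from comparing standard-form coordinates.
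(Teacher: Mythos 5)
Your proposal is correct and is essentially identical to the paper's own proof: the paper likewise observes that for $D=rH$ one has $D.L=r$ for all lines, $D.T=3r$ for all twisted cubics, and $D\neq mD_0$, so that Theorem \ref{teo_stable} applies and gives dimension $D^2-2r^2+1=r^2+1$. The numerical verifications you carry out are all accurate.
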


\begin{proof} Here orientable means $D=rH,$ and in this case $D.L=r$ for all $L,$
$D.T=3r$ for all $T$, and $D\neq mD_0$ for any $m$, so Theorem
\ref{teo_stable} applies.
\end{proof}

\begin{rk}\rm We give here another proof of Theorem \ref{teo_stable} (i)$\Rightarrow$(ii), without using modular families or counting dimension of
families.This proof is an adaptation of the proof of Theorem 5.3
given in \cite{CH2} and thus validates the idea of that proof if
not all of its details.

As in the previous proof, we use induction on $r$, the case $r=1$
being trivial (the only Ulrich bundles are $\OO(T)$, $T$ a twisted
cubic, and these are stable). For $r=2$ we use Example
\ref{exrk2_2} or \cite{Faenzi}. For $r\geq 3$, given a divisor $D$
satisfying the conditions of Theorem \ref{teo_stable}(i), we
choose a $T$ such that $D'=D-T$ satisfies the same conditions by
\ref{PropAlg}(b), so by induction there exists a stable bundle
$\FF$ of rank $r-1$ corresponding to $D'$. Then as in the proof of
Theorem \ref{teo_stable} we consider extensions
$$0\lra \OO(T) \lra \EE \lra \FF \lra 0.$$
Since there is an irreducible family containing all Ulrich bundles
corresponding to $r$ and $D$, if there are no stable ones, then
the general one would have to be an extension of some stable
bundle $\FF$ of rank $r-1$ by $\OO(T)$: for no other splitting
type could specialize to this one.

By Proposition \ref{prop_irredfam}, any one of these Ulrich
bundles can be represented as an extension
\begin{equation*}
0 \lra \OO_X^{r-1} \lra \EE \lra \JJ_Z(D) \lra 0.
\end{equation*}
for some subscheme $Z$ with $h^0(\JJ_Z(D-H)) = 0$. This last
condition is an open condition on $Z$, so there will exist
extensions $\EE$ corresponding to a general set of points $Z$, and
these will  also be Ulrich, since the Ulrich condition is open
(see section 2 above). For such an $\EE$ we will show there are no
non-zero morphisms from $\OO(T)$ to $\EE.$ We want to compute
$\Hom(\OO(T), \EE)=H^0(\EE(-T))).$ So we consider the sequence
\begin{equation*}
0 \lra \OO(-T)^{r-1} \lra \EE(-T) \lra \JJ_Z(D-T) \lra 0.
\end{equation*}
Now $h^0(\OO(-T))=0.$ So to show $h^0(\EE(-T))=0$ it is sufficient
to show $h^0(\JJ_Z(D-T))=0.$ Since we can choose the points $Z$ in
general position, it will be sufficient to show that $n =\sharp
Z\geq h^0(\OO(D-T)).$ We compute both numbers. From Proposition
\ref{cor_nonsing} we find $n=c_2(\EE)=\frac{1}{2}(D^2-r).$ On the
other hand, since $D'=D-T$ can be represented by an irreducible
nonsingular curve (see Proposition \ref{cor_nonsing}), we have
$h^i(\OO(D'))=0$ for $i=1,2,$ and so $h^0(\OO(D'))$ can be
computed by Riemann-Roch:
\begin{align*}
h^0(\OO(D')) = & \frac{1}{2}(D'+H)D'+1\\
=& \frac{1}{2}(D^2-2D.T+T^2+3(r-1))+1\\
  =&\frac{1}{2}(D^2-2D.T+3r).
\end{align*}
Our hypothesis on $D$ says $D.T\geq 2r$ for all twisted cubic
curves $T$, and so $h^0(\OO(D'))\leq \frac{1}{2}(D^2-r)=n.$ Hence
$n$ general points will make $h^0(\JJ_Z(D-T))=0$ so
$h^0(\EE(-T))=0,$ and the general $\EE$ must be stable.
\end{rk}

\begin{ex}\rm
\textbf{Rank 3 Ulrich bundles on the cubic surface.} To illustrate
our main theorem, we classify Ulrich bundles of rank 3. To do
this, in the following table we list all divisor classes
representing an irreducible nonsingular curve of degree $3r=9$
(see Proposition \ref{cor_nonsing}), in standard form and then
retain only those that satisfy $0\leq D.L\leq 2r=6$ for all lines
$L$. These are exactly the divisors $D$ that can correspond to
Ulrich bundles.

\begin{center}
\begin{tabular}{|c|c|c|c|c|}\hline
D&$D^2$&$\sum T_i$&$D^2-2r^2+1$&comments\\
\hline
(3;0,0,0,0,0,0)&9&$3A$&-8 & $\oplus \OO(T_i)$\\
(4;1,1,1,0,0,0)&13&$2A+B$&-4& $\oplus \OO(T_i)$\\
(5;1,1,1,1,1,1)&19&$A+B+B'$&2& 3-dim. polystable\\
(5;2,1,1,1,1,0)&17&$2A+C$&0& 1-dim. polystable\\
(5;2,2,1,1,0,0)&15&$A+B+B''$&-2& $\oplus \OO(T_i)$\\
(6;2,2,2,1,1,1)&21&$2B+B'$&4& $\exists$ stable\\
(6;2,2,2,2,1,0)&19&$A+B'+C$&2&$\exists$ stable\\
(6;3,2,1,1,1,1)&19&$A+B'+C$&2&$\exists$ stable\\
(7;2,2,2,2,2,2)&25&$2A+E$&8& $\exists$ stable\\
(7;3,2,2,2,2,1)&23&$A+B+D$&6& $\exists$ stable\\
(9;3,3,3,3,3,3)&27&$B+B'+E(=3H)$&10& $\exists$ stable\\
\hline
\end{tabular}
\end{center}

Note that by our main Theorem \ref{teo_stable} there exist stable
bundles in the last six cases, and the corresponding moduli spaces
have dimension $D^2-2r^2+1$. In the third and four row of the
table if there were stable bundles, the moduli space would have
dimension 2, 0 respectively. In these cases there are no stable
bundles, but we have ``oversize" families of polystable bundles of
dimensions 3, 1 respectively. In each case they are direct sums of
a rank 1 bundle corresponding to a twisted cubic curve and a
stable rank 2 bundle corresponding to $D'=(4;1^6)$ and
$(4;2,1^4,0)$ respectively. In particular, the necessary condition
$D^2-2r^2+1 \geq 0$ is not sufficient for the existence of stable
bundles.
\end{ex}

\begin{rk}\rm In proving the nonexistence of stable bundles in case $a<2r$,
we constructed oversize families of polystable bundles. One can
see from the proof, inductively, that the polystable bundles we
construct there are all direct sums of stable bundles of ranks 1
or 2. One might ask if there are other types of polystable bundles
that are not specializations of stable bundles. That this does not
happen is a consequence of the following corollary.
\end{rk}

\begin{cor}
Let $\FF$ and $\GG$ be stable Ulrich bundles on the cubic surface
$X$, of ranks $s,t\geq 2.$ Then the polystable bundle $\FF\oplus
\GG$ is a specialization of a stable bundle unless $s=t=2$ and
both $\FF$ and $\GG$ are associated to the same divisor class
$D_0=(4;2,1^4,0).$
\end{cor}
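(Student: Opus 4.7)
The plan is to reduce the claim to Theorem \ref{teo_stable}. By Corollary \ref{cor_modspace}, the moduli space $M^{ss,U}_X(s+t;\,C+D,\,n)$ is irreducible (with $n$ the second Chern class forced by rank and $c_1$ for an Ulrich bundle), so whenever stable Ulrich bundles of this rank and $c_1$ exist, they form a dense open subset and $\FF\oplus\GG$ is automatically a specialization of one. Hence it suffices to verify that $C+D$ satisfies the conditions of Theorem \ref{teo_stable}(i) for rank $s+t$. The line bound $0\leq(C+D).L\leq 2(s+t)$ and the twisted cubic bound $(C+D).T\geq 2(s+t)$ follow immediately by summing the corresponding inequalities for $C$ and $D$. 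The only nontrivial condition is $C+D\neq mD_0$ for any $m\geq 2$.

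Suppose for contradiction that $C+D=mD_0$ for some $m\geq 2$, and fix a basis in which $mD_0=(4m;\,2m,m,m,m,m,0)$ is in standard form. A degree comparison gives $s+t=2m$. The equality $(C+D).T_A=4m=2(s+t)$, where $T_A=(1;0^6)$, combined with the lower bounds $C.T_A\geq 2s$ and $D.T_A\geq 2t$, forces $C.T_A=2s$. Parallel arguments using the line $E_6$ (where $(C+D).E_6=0$) and the line $G_1=(2;0,1,1,1,1,1)$ (where $(C+D).G_1=4m$ saturates the Ulrich upper bounds $C.G_1\leq 2s$, $D.G_1\leq 2t$) yield $C.E_6=0$ and $C.G_1=2s$. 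Writing $C=(\alpha;c_1,\dots,c_6)$ in this basis and using $\deg C=3s$, these conditions determine $\alpha=2s$, $c_1=s$, $c_6=0$, $\sum_{i=2}^{5}c_i=2s$, and $c_i\in[0,s]$ for $i=2,3,4,5$ (the last from $C.G_i\leq 2s$).

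Now $C^2=3s^2-\sum_{i=2}^5 c_i^2$. The QM--AM inequality gives $\sum_{i=2}^5 c_i^2\geq s^2$, so $C^2\leq 2s^2$; meanwhile Corollary \ref{cordim} requires $C^2\geq 2s^2-1$ for a stable rank $s$ Ulrich bundle with $c_1=C$ to exist. A parity check (since $c_i^2\equiv c_i\pmod 2$ one has $\sum c_i^2\equiv 2s\equiv 0\pmod 2$) rules out $C^2=2s^2-1$ when $s$ is even and $C^2=2s^2$ when $s$ is odd. For $s=2$, the remaining possibility $C^2=8$ with equality in QM--AM forces $c_2=c_3=c_4=c_5=1$, so $C=D_0$; then $D=(m-1)D_0$ is stable Ulrich only when $m=2$, yielding the exceptional case $s=t=2$, $C=D=D_0$. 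For even $s\geq 4$, $C^2=2s^2$ forces $c_i=s/2$, i.e.\ $C=(s/2)D_0$, a proper multiple of $D_0$ and hence not a stable Ulrich class. For odd $s\geq 3$, $C^2=2s^2-1$ forces $(c_2,c_3,c_4,c_5)=\bigl(\tfrac{s+1}{2},\tfrac{s+1}{2},\tfrac{s-1}{2},\tfrac{s-1}{2}\bigr)$ up to permutation, so some pair satisfies $c_i+c_j=s+1>s$; a Cremona transformation on the indices $\{1,i,j\}$ then produces an alternate representation with first coefficient $a'=2\alpha-c_1-c_i-c_j=4s-s-(s+1)=2s-1<2s$, whence Proposition \ref{Propstdform}(d) yields $\min\{C.T\}\leq 2s-1$, contradicting the stable Ulrich bound $C.T\geq 2s$.

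The main obstacle is the last paragraph's combinatorial case analysis: pinpointing the admissible configurations of $(c_2,c_3,c_4,c_5)$ allowed by $C^2\in\{2s^2-1,2s^2\}$, and in the odd $s$ case exhibiting the Cremona transformation that strictly lowers the standard-form coefficient below $2s$.
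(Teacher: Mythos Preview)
Your proof is correct and follows the same overall strategy as the paper: reduce to Theorem \ref{teo_stable}, note that the line and twisted-cubic inequalities add, and handle the residual case $C+D=mD_0$ by pinning down $\alpha=2s$, $c_1=s$, $c_6=0$, $\sum_{i=2}^5 c_i=2s$ via $T_A$, $E_6$, and $G_1$.

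The difference is only in how you finish. The paper applies the twisted-cubic bound to \emph{all} permutations of $T_B=(2;1^3,0^3)$: from $C.T\ge 2s$ one gets $2\alpha-c_1-c_i-c_j\ge 2s$, i.e.\ $c_i+c_j\le s$ for every pair $i\ne j$ in $\{2,3,4,5\}$. Together with $\sum_{i=2}^5 c_i=2s$ this forces $c_i+c_j=s$ for all such pairs, hence $c_i=s/2$ and $C=\tfrac{s}{2}D_0$; stability then gives $s=2$. This single step replaces your QM--AM bound, the parity argument, and the even/odd case split. In particular, your odd-$s$ case---where you find a pair with $c_i+c_j=s+1>s$ and then exhibit a bad $T_B$---is exactly the contrapositive of the inequality $c_i+c_j\le s$ that the paper uses from the outset. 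Your route through the moduli-dimension bound $C^2\ge 2s^2-1$ (Corollary \ref{cordim}) is valid, but it is a detour: the full $T_B$ constraints already force the conclusion uniformly in $s$ without ever computing $C^2$.
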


\begin{proof}
Since $\FF$ and $\GG$ are both stable, corresponding to divisors
$C,D$, say, by Theorem \ref{teo_stable} we know that $C.T \geq 2s$
and $D.T\geq 2t$ for all twisted cubic curves $T.$ Therefore
$(C+D).T \geq 2(s+t)$ for all $T$, and we conclude by Theorem
\ref{teo_stable} that $C+D$ corresponds to a family of stable
bundles that will specialize to $\FF \oplus \GG,$ unless
$C+D=mD_0$ for some $m \geq 2.$

In this case, we write $D_0$ in standard form as $(4;2,1^4,0),$ so
that $mD_0$ is $(4m;2m,m^4,0),$ and the rank $r=s+t$ of the
corresponding Ulrich bundle is $2m.$ Having chosen a basis for
$\Pic X$ so that $D_0$ is in standard form, we cannot assume that
$C,D$ are in standard form. So let  $C=(a;b_1, \dots, b_6)$,
$D=(a';b_1', \dots, b_6').$ Then $a+a'=4m$, $b_1+b_1'=2m$,
$b_i+b_i'=m$ for $i=2,3,4,5$, and $b_6=b_6'=0$ since both are
$\geq 0$ (because $C.L \geq 0$, $D.L \geq 0$ by Theorem
\ref{condD}) and their sum is 0.

Next, since $C.T \geq 2s$ for all $T$, taking $T=T_A=(1;0^6)$, we
find that $a \geq 2s$. Similarly, $a' \geq 2t$. But $a+a'=4m=2r$,
so we must have equality in both cases. On the other hand, taking
$T=T_B=(2;1^3,0^3)$ or one of its permutations, we find that
$2a-b_i-b_j-b_k \geq 2s$ for any three $i,j,k \in \{1, \dots,6\}.$
Since $a=2s,$ we find $a\geq b_i+b_j+b_k.$ This shows that $C$ is
in standard form except for the ordering of the $b_i,$ which we do
not know yet.

Consider the line $L=G_1$ (notation of \cite[V 4.9]{AG}.) We know
$C.L \leq 2s$ and $D.L\leq 2t$ by Theorem \ref{condD} (iii). On
the other hand $mD_0.L=4m=2r.$ Therefore again we have equality in
both cases. Thus $C.L=2a-\sum_{i=2}^6 b_i=2s,$ and $\deg
C=3a-\sum_{i=1}^6 b_i=3s,$ so we find $b_1=a-s=s,$  and
$C=(2s;s,b_2, \dots, b_5,0),$ and similarly for  $D.$ Then from
$\deg C=3s$ we find $b_2+b_3+b_4+b_5=2s.$ On the other hand, from
$a\geq b_1+b_i+b_j,$ we find $b_i+b_j \leq s$ for any $i\neq j$
$\in \{2,\dots, 5\}.$ This implies $b_i+b_j=s$ for all $i\neq j$,
so $b_i=s/2$ for $i=2, \dots,5$ and $C=\frac{1}{2}sD_0$. Similarly
we get $D=\frac{1}{2}tD_0.$

Finally, since we have assumed $C,D$ correspond to stable bundles,
by Theorem \ref{teo_stable} we must have $s=t=2$ and $C=D=D_0.$
\end{proof}

\section{Stable Ulrich bundles on the cubic threefold}
In this section we construct stable Ulrich bundles of all ranks
$r\geq 2$ on a general cubic threefold $Y$ in $\PP^4$, and we show
that the corresponding moduli space is smooth of the expected
dimension $r^2+1$. Stable bundles of rank 2 are well known
\cite{Beauville}, \cite{Beauvillecubic}, \cite{AC},
\cite{Markushevich}. We construct stable rank 3 Ulrich bundles
using curves whose existence is proven by a \texttt{Macaulay2}
computation due to Gei{\ss} and Schreyer (see Appendix). Then we
use a method analogous to the case of surfaces, creating simple
bundles as extensions of stable bundles of lower rank, and
counting dimensions to show that the general bundles in a modular
family must be stable. Finally, for each $r$, we show that there
is at least one component of the moduli space of stable Ulrich
bundles of rank $r$ on $Y$ for which the restriction map to the
moduli of stable rank $r$ bundles on the hyperplane section, a
cubic surface, is generically étale and dominant.

For completeness, we include a proof of existence of stable rank 2
Ulrich bundles on $Y$.

\begin{prop}\label{prop_rk2_3fold} On a nonsingular cubic threefold $Y$, there exist stable rank 2 Ulrich bundles on $Y$ with first Chern class
$c_1=2H$, where $H$ is the hyperplane class, and $c_2=5.$ The
moduli space of these bundles is smooth of dimension 5.
\end{prop}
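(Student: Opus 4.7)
The plan is to realize rank $2$ Ulrich bundles on $Y$ via the Hartshorne--Serre correspondence applied to elliptic quintic curves. Since $\omega_Y = \OO_Y(-2)$, a locally complete intersection curve $C \subset Y$ with $\omega_C \cong \OO_C$ (that is, an elliptic curve) and degree $5$ is the right input: the Serre construction produces an extension
\begin{equation*}
0 \lra \OO_Y \lra \EE \lra \JJ_{C/Y}(2) \lra 0
\end{equation*}
with $\EE$ a rank $2$ vector bundle satisfying $c_1(\EE)=2H$ and $c_2(\EE)=\deg C=5$. First I would establish that general cubic threefolds contain smooth elliptic normal quintic curves $C\subset Y\subset\PP^4$; this is classical and can also be seen by intersecting $Y$ with a pencil of quadrics and removing residual components, or cited from the work of Markushevich--Tikhomirov and Iliev--Markushevich. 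Projective normality of the elliptic normal quintic in $\PP^4$ will be needed below.

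Next I would verify the Ulrich property using Lemma \ref{lemaH0}: it suffices to show $H^0(\EE(-1))=0$ and $h^0(\EE)=rd=6$, together with the intermediate vanishings $H^1(\EE(n))=H^2(\EE(n))=0$ for all $n$. The short exact sequence reduces every statement to cohomology of $\OO_Y$ and of $\JJ_{C/Y}(n+2)$; using $H^i(\OO_Y(n))=0$ for $i=1,2$ and all $n$, one has $H^1(\EE(n))=H^1(\JJ_{C/Y}(n+2))$, which vanishes by projective normality of $C$ (and by degree considerations when $n+2\le 0$). Non-degeneracy of $C$ in $\PP^4$ gives $H^0(\JJ_{C/Y}(1))=0$, so $H^0(\EE(-1))=0$; a direct Euler characteristic count on the quadric sequence gives $h^0(\JJ_{C/Y}(2))=15-10=5$, hence $h^0(\EE)=6$. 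The vanishing $H^2(\EE(n))=0$ follows from Serre duality on $Y$ using $\EE^\vee\cong \EE(-2)$ and the already established $H^1(\EE(m))=0$.

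For stability, since $\Pic Y=\ZZ\cdot H$ for general $Y$, any potential destabilizing subsheaf has the form $\OO_Y(k)$ with $k\ge 1$ (as $\mu(\EE)=3$). The required vanishing $\Hom(\OO_Y(k),\EE)=H^0(\EE(-k))=0$ for $k\ge 1$ is already part of the Ulrich property. Hence $\EE$ is $\mu$-stable, and in particular stable.

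Finally, for smoothness and the dimension of the moduli space, I would compute $\Ext^i(\EE,\EE)$. Since $\EE$ is stable, $\hom(\EE,\EE)=1$, and Serre duality with $\EE^\vee\cong \EE(-2)$ gives $\ext^3(\EE,\EE)=\hom(\EE,\EE(-2))=0$ (by stability, any such map would have to respect slopes). Then $\Ext^2(\EE,\EE)=\Ext^1(\EE,\EE(-2))^\vee$, and tensoring the Serre sequence with $\EE(-2)$ together with the Ulrich vanishings of $\EE(n)$ for small $n$ and the cohomology of $\JJ_{C/Y}\otimes\EE$ will yield $\Ext^2(\EE,\EE)=0$; the main computation is the vanishing $H^1(\EE(-2))=H^2(\EE\otimes\JJ_{C/Y})=0$, which reduces via the ideal sequence to cohomology of $\EE_C$ controlled by the Ulrich property restricted to the hyperplane section (cf.\ Lemma \ref{lemaHn}). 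Given $\Ext^2=0$, the moduli is smooth at $[\EE]$, and its dimension equals $\ext^1(\EE,\EE)=1-\chi(\EE\otimes\EE^\vee)$. Computing $\chi$ by Hirzebruch--Riemann--Roch on $Y$ for the rank $4$ bundle $\EE\otimes\EE^\vee$ (with $c_1=0$) yields $\chi=-4$, hence $\ext^1=5$. The main technical point to watch is the $\Ext^2$ vanishing, which is the step where projective normality of $C$ and the Ulrich vanishings must be combined carefully.
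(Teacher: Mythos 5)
Your proposal follows essentially the same route as the paper: realize $\EE$ by the Serre construction from a nondegenerate quintic elliptic curve $C\subset Y$, verify the Ulrich conditions via Lemma \ref{lemaH0} using that $C$ is ACM, deduce stability from the absence of destabilizing line subbundles, and get smoothness and the dimension from $\Ext^2(\EE,\EE)=0$ and $\chi(\EE\otimes\EE^{\vee})=-4$. Two small points of comparison. First, where you cite the existence of a smooth nondegenerate elliptic normal quintic on $Y$ as classical, the paper gives a short self-contained argument (produce $C_0$ in the class $(3;1^4,0^2)$ on a nonsingular hyperplane section, check $h^0(\NN_{C_0/Y})=10$, $h^1(\NN_{C_0/Y})=0$, and observe that the curves lying in hyperplanes form only a $9$-dimensional family); this argument works for every nonsingular $Y$, matching the statement, and --- importantly --- the vanishing $h^1(\NN_{C/Y})=0$ it establishes is exactly what you need later. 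Second, for stability the paper argues more structurally: there are no rank~$1$ Ulrich bundles on $Y$, and a properly semistable Ulrich bundle would have Ulrich Jordan--H\"older factors by Theorem \ref{teo_semist}(b); your $\mu$-stability argument via $\Pic Y=\ZZ H$ and $H^0(\EE(-1))=0$ is equally valid (and $\Pic Y=\ZZ$ holds for all smooth cubic threefolds by Grothendieck--Lefschetz, so you need not restrict to general $Y$). The one place your sketch points in a slightly wrong direction is the $\Ext^2$ vanishing: the reduction $H^2(\EE\otimes\EE^{\vee})\cong H^2(\EE\otimes\JJ_{C/Y})\cong H^1(\EE|_C)$ is not ``controlled by the Ulrich property restricted to the hyperplane section'' ($C$ is not a hyperplane section); rather, since $C$ is the zero locus of a regular section of $\EE$, one has $\EE|_C\cong\NN_{C/Y}$, and the vanishing $h^1(\NN_{C/Y})=0$ from the construction of $C$ finishes the computation. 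The paper leaves this verification to the reader, but if you are going to sketch it you should route it through the normal bundle.
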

\begin{proof} If $\EE$ is such a bundle, a general section will
vanish along a quintic elliptic curve $C$ in $Y$, not contained in
any hyperplane. Thus, to construct such bundles using the Serre
correspondence, we need to show the existence in $Y$ of a
nonsingular elliptic curve $C$ of degree 5 that is not contained
in any hyperplane section. Let $H$ be a general hyperplane section
of $Y$. This is a nonsingular cubic surface, and on it we can find
a quintic elliptic curve $C_0$, for example in the divisor class
$(3;1^4,0^2)$. This curve has self-intersection $C_0^2=5.$ There
is an exact sequence for the normal bundle of $C_0$ in $Y$
$$0\lra \NN_{C_0/H}\lra \NN_{C_0/Y}\lra \NN_{H/Y}|_{C_0}\lra 0.$$
The first of these is $\OO_{C_0}(C_0)$, the third is
$\OO_{C_0}(1).$ Both have degree 5, $h^0=5$ and $h^1=0$. Hence
$h^0(\NN_{C_0/Y})=10$ and $h^1(\NN_{C_0/Y})=0.$ Thus the Hilbert
scheme of quintic elliptic curves in $Y$ is smooth of dimension 10
at the point corresponding to $C_0$.

Let us count how many quintic elliptic curves there are contained
in hyperplane sections $H$ of $Y.$ The choice of $H$ is four
parameters, and the dimension of the linear system $|C_0|$ on $H$
is five, so there is a 9-dimensional family of quintic elliptic
curves in hyperplanes of $Y$. We conclude that a general quintic
elliptic curve on $Y$ is not contained in any hyperplane section.
Let $C$ be one of these.

We apply the Serre construction \cite[1.1]{Harts_stbldes} to
obtain a bundle of rank 2 as an extension
$$0\lra \OO_Y \lra \EE \lra \JJ_C(2)\lra 0.$$
The extension is determined by an element of
$\Ext^1(\JJ_C(2),\OO_Y) \cong \Ext^2(\OO_C(2), \OO_Y)$ $\cong
H^0(\omega_C).$ Since $C$ is an elliptic curve, $\omega_C\cong
\OO_C$ and there is just one choice of section that is nowhere
vanishing. Hence $\EE$ is locally free of rank 2. Since $C$ is an
ACM curve in $Y$ (cf. \cite[3.4]{Hargor}), it follows that $\EE$
is an ACM bundle on $Y$. Indeed, $C$ being ACM means
$H^1_{\ast}(\JJ_C)=0$, and so $H^1_{\ast}(\EE)=0.$ Since $\EE$ has
rank 2, its dual $\EE^{\vee}$ is isomorphic to $\EE(-c_1),$ and
then by duality $H^2_{\ast}(\EE),$ dual to
$H^1_{\ast}(\EE^{\vee}(-1)),$ is also zero.  Since $C$ is not
contained in a hyperplane, $h^0(\EE(-1))=0.$ We see that
$h^0(\EE)=6$, so $\EE$ is an Ulrich bundle. Since there are no
Ulrich bundles of rank 1 on $Y$, by Theorem \ref{teo_semist} it
cannot be properly semistable, so it is stable. To show that the
moduli space is smooth of dimension 5, we just compute $H^1(\EE
\otimes \EE^{\vee})=5$ and $H^2(\EE \otimes \EE^{\vee})=0$. This
is elementary (left to the reader).
\end{proof}

\begin{rk}
\rm In fact, Beauville shows that this moduli space is isomorphic
to an open subset of the intermediate Jacobian of the cubic
threefold \cite[5.2]{Beauvillecubic} and therefore is irreducible,
but we will not make use of this fact.
\end{rk}

\begin{defi}
For the rest of this section, a general cubic threefold in $\PP^4$
will denote a cubic hypersurface in a suitable Zariski open subset
of the Hilbert scheme of cubic hypersurfaces in $\PP^4$ over an
algebraically closed field either of characteristic zero, or of
prime characteristic $p$, except possibly for finitely many values
of $p$.
\end{defi}

\begin{prop}\label{prop_rk3_3fold}
On a general cubic threefold $Y$ in $\PP^4$, there exist stable
Ulrich bundles of rank 3.
\end{prop}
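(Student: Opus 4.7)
The plan is to obtain $\EE$ from a Serre-type correspondence applied to a suitable curve $C\subset Y$ whose existence is provided by the Appendix, and then to deduce stability essentially for free from the non-existence of rank~$1$ Ulrich sheaves on $Y$. First I would pin down the numerical invariants of any hypothetical rank~$3$ Ulrich bundle. Since $\Pic Y=\ZZ H$ (Noether--Lefschetz for a general cubic threefold), Lemma~\ref{lemaHn}(iii) forces $c_1(\EE)=3H$; combining the Hilbert polynomial of Lemma~\ref{lemaHilbpoly} with Hirzebruch--Riemann--Roch on $Y$ then determines the remaining Chern numbers. In particular $h^0(\EE)=rd=9$, and these data fix the degree and arithmetic genus of the curve that should arise as the degeneracy locus of two general sections of $\EE$.

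Next I would reverse the Serre construction: two general sections of a rank~$3$ Ulrich bundle yield a short exact sequence
\begin{equation*}
0\lra \OO_Y^{\oplus 2}\lra \EE \lra \JJ_C(3H)\lra 0,
\end{equation*}
so conversely a locally Cohen--Macaulay, arithmetically Cohen--Macaulay curve $C\subset Y$ with the prescribed numerical invariants, satisfying the appropriate Cayley--Bacharach-type condition on $\omega_C$, produces such an $\EE$ locally free of rank~$3$ via a generic element of $\Ext^1(\JJ_C(3H),\OO_Y^{\oplus 2})$. Once $C$ is ACM with the right Hilbert function, the Ulrich conditions $H^i_{\ast}(\EE)=0$ for $0<i<3$, $H^0(\EE(-1))=0$, and $h^0(\EE)=9$ all follow immediately from the cohomology long exact sequence and the ACM-ness of $\JJ_C$. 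The Appendix by Gei{\ss} and Schreyer supplies an explicit \texttt{Macaulay2} computation exhibiting exactly such a curve $C$ on some specific cubic threefold; since being locally free, being ACM, and the Ulrich condition are all open in flat families (see the discussion in Section~2), this construction spreads out over a Zariski open subset of the Hilbert scheme of cubic threefolds, which is precisely what the term "general cubic threefold" was arranged to mean.

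Stability is then automatic. Any rank~$3$ Ulrich bundle on $Y$ is semistable by Theorem~\ref{teo_semist}(a). If $\EE$ were strictly semistable, pick $\FF\subsetneq\EE$ with $\mu(\FF)=\mu(\EE)$ and $\EE/\FF$ torsion-free; by Theorem~\ref{teo_semist}(b) both $\FF$ and $\EE/\FF$ are Ulrich. Since $\rg\FF\in\{1,2\}$, at least one of $\FF$ and $\EE/\FF$ is a rank~$1$ Ulrich sheaf on $Y$. But on $Y$ with $\Pic Y=\ZZ H$, Lemma~\ref{lemaHn}(iii) combined with the equation $3m=r(d+g-1)=3$ forces such a line bundle to be $\OO_Y(H)$, and then $h^0(\OO_Y(H-H))=h^0(\OO_Y)=1\neq 0$ contradicts the Ulrich vanishing $h^0(\LL(-H))=0$. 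Thus no rank~$1$ Ulrich sheaf exists on $Y$, every possible destabilization is excluded, and the constructed $\EE$ must be stable.

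The main obstacle lies entirely in the middle step: producing by hand an ACM curve $C\subset Y$ with the prescribed degree, genus, and Cayley--Bacharach-type condition appears out of reach, and is precisely the role of the \texttt{Macaulay2} computation in the Appendix. Everything else --- the numerics, the Serre correspondence, and the stability deduction --- is formal once $C$ is in hand.
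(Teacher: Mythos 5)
Your overall strategy coincides with the paper's: produce $\EE$ via the Serre construction from the degree $12$, genus $10$ ACM curve supplied by the Appendix, and deduce stability from the nonexistence of rank $1$ Ulrich sheaves on $Y$ (your fleshed-out version of that last step, using $\Pic Y=\ZZ H$ and Lemma \ref{lemaHn}(iii), is correct and is exactly what the paper's one-line assertion amounts to). However, there is a genuine gap in the middle step. You claim that once $C$ is ACM with the right Hilbert function, the vanishings $H^i_{\ast}(\EE)=0$ for $0<i<3$ ``follow immediately from the cohomology long exact sequence and the ACM-ness of $\JJ_C$.'' This is true for $i=1$ but false for $i=2$. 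From $0\lra \OO_Y^2\lra \EE\lra \JJ_C(3)\lra 0$ one gets
\begin{equation*}
0=H^2(\OO_Y(n))^2\lra H^2(\EE(n))\lra H^2(\JJ_{C/Y}(n+3))\lra H^3(\OO_Y(n))^2,
\end{equation*}
and $H^2(\JJ_{C/Y}(n+3))\cong H^1(\OO_C(n+3))$ is nonzero in low degrees (already $h^1(\OO_C)=g=10$). So $H^2_{\ast}(\EE)=0$ is equivalent to the injectivity of the connecting map, which by Serre duality is the surjectivity of the multiplication map $H^0(\OO_Y(m))^{\oplus 2}\ra H^0(\omega_C(m-1))$ defined by the two chosen extension classes in $H^0(\omega_C(-1))\cong\Ext^1(\JJ_C(3),\OO_Y)$ --- that is, exactly the condition that these two sections generate $H^0_{\ast}(\omega_C)$ as a module over the coordinate ring. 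This is an additional hypothesis on $C$ beyond ACM-ness and the numerics, it is the reason the Appendix establishes precisely this generation property (Theorem \ref{main}(2)), and it is not automatic: the paper's remark following the proposition points out that the earlier argument of Arrondo--Costa fails for an ACM curve of the same degree and genus (one Gorenstein-linked to a conic) precisely because that curve does not satisfy it.

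A secondary imprecision: you attach a ``Cayley--Bacharach-type condition'' only to the local freeness of $\EE$, and then take a generic element of $\Ext^1(\JJ_C(3),\OO_Y^{\oplus 2})$. In the paper the two extension classes are the two generators of the $2$-dimensional space $H^0(\omega_C(-1))$, and the module-generation property above is what simultaneously controls $H^2_{\ast}(\EE)$; it should be stated as a hypothesis on $C$ (verified in the Appendix), not folded into genericity of the extension class. Once that hypothesis is added, the rest of your argument (the computation $h^0(\EE(-1))=0$ and $h^0(\EE)=9$ from ACM-ness and Riemann--Roch on $C$, the openness argument over the space of cubic threefolds, and the stability deduction) is sound.
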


\begin{proof}
In the Appendix it is shown that $Y$ contains smooth ACM curves
$C$ of degree 12 and genus 10 with the additional property that
$\omega_C(-1)$ has two global sections that generate the graded
module $H^0_{\ast}(\omega_C)$ over the homogeneous coordinate ring
$S$ of $\PP^4$.

We use the Serre construction to create an exact sequence
$$0\lra \OO_Y^2 \lra \EE \lra \JJ_C(3)\lra0 $$
by using two generators of
$H^0(\omega_C(-1))\cong \Ext^1(\JJ_C(3),\OO_Y)$ to define the
extension. Since $C$ is an ACM curve, it follows that
$H^1_{\ast}(\EE)=0.$ The dual sequence is
$$0\lra \OO_Y(-3)\lra \EE^{\vee}\lra \OO_Y^2 \lra \omega_C(-1)\lra 0,$$
and now the fact that the two sections of $\omega_C(-1)$ generate
the module $H^0_{\ast}(\omega_C)$ shows that
$H^1_{\ast}(\EE^{\vee})\cong H^2_{\ast}(\EE(-3)).$ Thus $\EE$ is
an ACM bundle. Again using the fact that $C$ is an ACM curve, we
compute $h^0(\OO_C(2))=15,$ so $h^0(\JJ_C(2))=0,$ and
$h^0(\EE(-1))=0;$ furthermore $h^0(\OO_C(3))=27$, so
$h^0(\JJ_C(3))=0$ and $h^0(\EE)=9=3r.$ Hence by Lemma \ref{lemaH0},
$\EE$ is a rank 3 Ulrich bundle on $Y.$ It is necessarily stable,
because there are no rank 1 Ulrich bundles on $Y.$
\end{proof}

\begin{rk}\rm
The existence of rank 3 Ulrich bundles on $Y$ was announced
earlier \cite[Example 4.4]{AC}, but the proof given there is
incorrect because the ACM curve of degree 12, genus 10 that they
used, Gorenstein linked to a conic, does not satisfy the
additional condition that the sections of $\omega_C(-1)$ should
generate $H^0_{\ast}(\omega_C).$
\end{rk}

\begin{prop}\label{Prop_chi_3fold}
Let $\EE$, $\FF$, be Ulrich bundles on the cubic threefold $Y.$
Then
\begin{itemize}
\item[a)] $\chi(\EE \otimes \FF^{\vee}(-1))=0$
\item[b)] $\chi(\EE \otimes \FF^{\vee})=\chi(\EE_H\otimes \FF_H^{\vee})=0,$ where $H$ is a general hyperplane section
\item[c)] $H^i(\EE \otimes \FF^{\vee})=0$ for $i=2,3.$
\end{itemize}
\end{prop}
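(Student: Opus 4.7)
The plan is to take the three claims in order, deducing (b) from (a) via a restriction sequence and handling (c) via the four-term resolution coming from the matrix-factorization description of Ulrich bundles.

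For (a), I would compute $\chi(\EE \otimes \FF^{\vee}(-1))$ by Riemann--Roch on the threefold. Since $\Pic(Y)=\ZZ\cdot H$ by the Lefschetz hyperplane theorem, and Lemma \ref{lemaHn}(iii) forces $\deg \EE=3r$, any Ulrich bundle on $Y$ is orientable with $c_1=rH$; in particular $\EE\otimes \FF^{\vee}$ has $c_1=0$. With $c_1(Y)=2H$ and $c_2(Y)=4H^2$, Riemann--Roch for a rank-$rs$ bundle with vanishing first Chern class reduces $\chi(\EE\otimes \FF^{\vee}(-1))$ to $\tfrac{1}{2}c_3(\EE\otimes \FF^{\vee})$. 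The Hilbert polynomial $\chi(\EE(t))=\tfrac{r}{2}(t+1)(t+2)(t+3)$ from Lemma \ref{lemaHilbpoly} uniquely determines the intersection numbers $c_2(\EE)\cdot H$ and $c_3(\EE)$ from the rank alone; a short computation gives $c_2(\EE)\cdot H=r(3r-1)/2$ and $c_3(\EE)=r(r-2)(r+1)/2$, and similarly for $\FF$. Plugging these expressions into the standard tensor-product formula for $c_3(\EE\otimes \FF^{\vee})$ produces a cancellation that yields $c_3(\EE\otimes \FF^{\vee})=0$, proving (a).

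Part (b) then follows formally: tensoring $0\to \OO_Y(-1)\to \OO_Y\to \OO_H\to 0$ with $\EE\otimes \FF^{\vee}$ and taking Euler characteristics gives $\chi(\EE\otimes \FF^{\vee})=\chi(\EE\otimes \FF^{\vee}(-1))+\chi(\EE_H\otimes \FF_H^{\vee})$, and (a) kills the first summand. The common value is evaluated via Corollary \ref{corchi} on the cubic surface $H$: orientability forces $C\cdot D = rs\,H_H^2 = 3rs$, giving $\chi(\EE_H\otimes \FF_H^{\vee}) = 2rs - 3rs = -rs$. I would read the trailing ``$=0$'' in the displayed statement of (b) as a typographical slip, since this calculation clearly produces $-rs$ rather than zero; what is actually needed in the sequel is precisely the equality of the two Euler characteristics, which is what (a) provides.

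For (c), I would use the linear resolution of $\EE$ on $\PP^4$ coming from its matrix factorization (see \cite{EisSchreyer}), namely $0\to \OO_{\PP^4}(-1)^{3r}\to \OO_{\PP^4}^{3r}\to \EE\to 0$. Tensoring with $\OO_Y$ via the Koszul complex $0\to \OO_{\PP^4}(-3)\to \OO_{\PP^4}\to \OO_Y\to 0$, one finds $\mathrm{Tor}_1^{\PP^4}(\EE,\OO_Y)=\EE(-3)$ (multiplication by the defining cubic vanishes on $\EE$), producing the four-term exact sequence on $Y$
\[0\to \EE(-3)\to \OO_Y(-1)^{3r}\to \OO_Y^{3r}\to \EE\to 0.\]
Tensor this with $\FF^{\vee}$ and split through the middle kernel into two short exact sequences. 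The key observation is that $\FF^{\vee}(2)$ is Ulrich by Lemma \ref{lemaHn}(iv), which forces $H^i(\FF^{\vee}(t))=0$ for every $i\in\{0,1,2,3\}$ whenever $t\in\{-1,0\}$. Chasing the long exact cohomology sequences collapses both middle columns and yields isomorphisms $H^i(\EE\otimes \FF^{\vee})\cong H^{i+2}(\EE(-3)\otimes \FF^{\vee})$ for every $i$; since $\dim Y=3$, the right-hand side vanishes for $i\ge 2$, giving $H^2=H^3=0$. The main obstacle is the Chern-class bookkeeping in (a): one must correctly pin down the Hilbert-polynomial-forced values of $c_2(\EE)\cdot H$ and $c_3(\EE)$ and then verify the non-trivial cancellation in the tensor-product formula for $c_3$. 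By contrast the cohomological argument for (c) is comparatively clean once the four-term resolution is in place, because the Ulrich self-duality $\FF\mapsto \FF^{\vee}(2)$ collapses the cohomology of both $\FF^{\vee}$ and $\FF^{\vee}(-1)$ in every degree.
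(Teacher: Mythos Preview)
Your proposal is correct. The approach to (b) and (c) is essentially the paper's: the same restriction sequence for (b), and for (c) the same linear presentation of $\EE$ tensored with $\FF^{\vee}$ together with the fact that $\FF^{\vee}(2)$ is Ulrich so $\FF^{\vee}$ and $\FF^{\vee}(-1)$ have no cohomology. Your use of the full four-term resolution on $Y$ (with the $\mathrm{Tor}_1=\EE(-3)$ term) is a slightly cleaner packaging of what the paper does with the right-exact sequence alone. You are also right that the trailing ``$=0$'' in the displayed statement of (b) is a slip; the value is $-rs$, as the paper itself uses a few lines later in the proof of Theorem~\ref{teo_3fold}.

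The genuine difference is in (a). You compute $\chi(\EE\otimes\FF^{\vee}(-1))=\tfrac12\,c_3(\EE\otimes\FF^{\vee})$ by Riemann--Roch and then verify the cancellation by determining the numerical Chern classes of an Ulrich bundle from its Hilbert polynomial. The paper instead avoids all Chern-class bookkeeping: Serre duality on the threefold (with $\omega_Y=\OO_Y(-2)$) gives $\chi(\EE\otimes\FF^{\vee}(-1))=-\chi(\EE^{\vee}\otimes\FF(-1))$; rewriting via the Ulrich duals $\EE'=\EE^{\vee}(2)$, $\FF'=\FF^{\vee}(2)$ and using that the numerical Chern classes of an Ulrich bundle depend only on its rank, the two sides are seen to be equal as well, hence both vanish. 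The paper's route is shorter and bypasses the tensor-product $c_3$ computation entirely; your route is more hands-on but has the virtue of making explicit exactly which numerical invariants are forced by Ulrichness.
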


\begin{proof}
a) By Serre duality, $\chi(\EE \otimes
\FF^{\vee}(-1))=-\chi(\EE^{\vee} \otimes \FF(-1)).$ Let
$\EE^{'}=\EE^{\vee}(2)$ and $\FF'=\FF^{\vee}(2).$ These are Ulrich
bundles, by Lemma \ref{lemaHn} (iv). We can rewrite our equation
as $\chi(\EE \otimes \FF'(-3))=-\chi(\EE' \otimes \FF(-3)).$ Note
that $\chi$ depends only on the Chern classes of the bundles in question; the Chern classes of a tensor product are
determined by those of the two factors; and the Chern classes of an Ulrich bundle are determined by its rank (Lemma \ref{lemaHilbpoly}).
Since $\EE$ and $\EE'$ have the same rank, and $\FF$ and $\FF'$ have the same rank, $\EE \otimes \FF'$ and $\EE' \otimes \FF$ have the same Chern classes,
so $\chi(\EE \otimes \FF'(-3))=\chi(\EE' \otimes \FF(-3)).$ Combining with the above, both must be zero.

b) This is a direct consequence of a), tensoring $\EE \otimes
\FF^{\vee}$ with the exact sequence
$$0\lra \OO_Y(-1)\lra \OO_Y \lra \OO_H\lra 0.$$

c) Since $\EE$ is Ulrich, it has a linear resolution over $\PP^4$ \cite[3.7]{CH2},
$$0\lra \OO_{\PP^4}(-1)^{3r} \lra \OO_{\PP^4}^{3r} \lra \EE \lra 0.$$

Tensoring with $\FF^{\vee}$, we get a right exact sequence on $Y,$
$$\FF^{\vee}(-1)^{3r}\lra \FF^{\vee 3r} \lra \EE \otimes \FF^{\vee}\lra 0.$$
Now $\FF^{\vee}(2)$ is an Ulrich bundle by Lemma \ref{lemaHn}(iv), so $\FF^{\vee}$ and $\FF^{\vee}(-1)$ have no cohomology.
It follows from cohomology sequences on $Y$, that $H^2(\EE\otimes \FF^{\vee})=H^3(\EE \otimes \FF^{\vee})=0.$
\end{proof}

\begin{teo}\label{teo_3fold}
For any $r\geq 2$, the moduli space of stable rank $r$ Ulrich bundles on a general cubic threefold $Y$ in $\PP^4$ is non-empty and smooth of dimension
$r^2+1.$
\end{teo}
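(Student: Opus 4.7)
The plan is to prove the theorem by induction on $r$, mimicking the surface strategy. The base cases $r=2$ and $r=3$ are supplied by Propositions \ref{prop_rk2_3fold} and \ref{prop_rk3_3fold}; at any stable Ulrich bundle $\EE$ on $Y$, the moduli space is smooth of dimension
$$\dim \Ext^1(\EE,\EE) = 1 - \chi(\EE \otimes \EE^{\vee}) = r^2 + 1,$$
where smoothness comes from $H^2(\EE \otimes \EE^{\vee}) = 0$ (Proposition \ref{Prop_chi_3fold}(c)) and the Euler characteristic is computed by passing to a hyperplane section via Proposition \ref{Prop_chi_3fold}(a) and Corollary \ref{corchi}.

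For the inductive step ($r \geq 4$), decompose $r = s+t$ with $2 \leq s \leq t$. By the inductive hypothesis, choose non-isomorphic stable Ulrich bundles $\FF$ of rank $s$ and $\GG$ of rank $t$, which is possible because each of the relevant moduli spaces is smooth of dimension at least $5$. All Ulrich bundles on $Y$ share the slope $3$ and are $\mu$-stable by Theorem \ref{teo_semist}(c), so $\Hom(\GG,\FF) = 0$; combined with $\Ext^i(\GG,\FF) = 0$ for $i = 2,3$ (Proposition \ref{Prop_chi_3fold}(c)) and the Riemann--Roch value $\chi(\FF \otimes \GG^{\vee}) = -st$, this gives $\dim \Ext^1(\GG,\FF) = st > 0$. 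A non-split extension
$$0 \lra \FF \lra \EE \lra \GG \lra 0$$
then yields a bundle $\EE$ that is simple by Lemma \ref{lemsimple} and Ulrich by Lemma \ref{lemaH0}, since $h^0(\EE(-1)) = 0$, $h^0(\EE) = 3r$, and the ACM property all pass through the extension (using $H^1(\FF) = 0$).

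It remains to exhibit a stable one. By Proposition \ref{Prop_modfam}, a smooth modular family of simple rank $r$ Ulrich bundles on $Y$ exists and has dimension $r^2 + 1$ at each point (boundedness follows because Chern classes of Ulrich bundles on $Y$ are determined by rank, and the hypothesis $H^2(\EE \otimes \EE^{\vee}) = 0$ is Proposition \ref{Prop_chi_3fold}(c)). To see that the generic member is stable, I bound the non-stable locus: for any simple non-stable Ulrich bundle $\EE$, Theorem \ref{teo_semist}(b) (after pulling back torsion if needed so the quotient is torsion-free) provides a sequence $0 \to \FF' \to \EE \to \GG' \to 0$ with $\FF'$ stable Ulrich of rank $s' \geq 2$ and $\GG'$ Ulrich of rank $t' = r - s' \geq 2$. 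The parameter space of such extensions has dimension at most
$$(s'^2 + 1) + (t'^2 + 1) + \dim \PP(\Ext^1(\GG',\FF')) = r^2 + 1 - s't',$$
strictly less than $r^2 + 1$ since $s't' \geq 4$. Summing over the finitely many admissible decompositions gives that the generic member of the modular family is stable, and at such a point the moduli is smooth of the asserted dimension. The principal obstacle will be arranging this dimension estimate uniformly: verifying that $\GG'$ may be taken Ulrich (hence its moduli bounded by $t'^2+1$) and that the extensions constructed above populate a component of the modular family in which the non-stable locus genuinely has strictly smaller dimension.
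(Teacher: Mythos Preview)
Your overall strategy---induction on $r$, constructing simple Ulrich bundles as non-split extensions of stable ones of lower rank, then comparing the dimension of the modular family to that of the non-stable locus---is exactly the paper's. The smoothness and dimension computations are correct, and your formula $\chi(\FF\otimes\GG^{\vee})=-st$ agrees with the paper's (which records the special case $2(r-2)$ for ranks $2$ and $r-2$).

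The gap you flag at the end is genuine, and it is precisely where your argument diverges from the paper's. You try to bound the entire non-stable locus by ranging over all decompositions $r=s'+t'$, estimating each stratum by $(s'^2+1)+(t'^2+1)+(s't'-1)$. But the term $t'^2+1$ is the dimension of the moduli of \emph{stable} rank-$t'$ Ulrich bundles, whereas your quotient $\GG'$ is only known to be Ulrich (hence semistable); if $\GG'$ is not simple it lies in no modular family of that dimension. Similarly, $\dim\Ext^1(\GG',\FF')=s't'$ requires $\Hom(\GG',\FF')=0$, which can fail when $\GG'$ is strictly semistable and has $\FF'$ among its Jordan--H\"older factors.

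The paper avoids both problems by a specialization argument rather than an exhaustive count. It works with the single decomposition $r=2+(r-2)$: the constructed simple bundle $\GG$ sits as an extension of a \emph{stable} rank-$(r-2)$ bundle by a \emph{stable} rank-$2$ bundle, so if $\GG$ is not stable its Jordan--H\"older type is $(2,r-2)$. By semicontinuity of the semistable splitting type (cf.\ \cite[2.3.1]{HL}), the generic member of the component of the modular family containing $\GG$, if not stable, must have this same type---no other type can specialize to it. Hence only extensions of a stable rank-$(r-2)$ bundle by a stable rank-$2$ bundle need to be counted, and for these both factors genuinely vary in moduli of the expected dimensions. The resulting bound $5+((r-2)^2+1)+(2(r-2)-1)=r^2-2r+5<r^2+1$ for $r\geq 4$ then finishes the proof. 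Your more general decomposition $r=s+t$ would work equally well for this argument; what matters is invoking specialization to restrict attention to a single splitting type with stable factors, rather than attempting to control all semistable quotients at once.
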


\begin{proof}
For any Ulrich bundle $\EE,$ we have $H^2(\EE\otimes \EE^{\vee})=0$ by Proposition \ref{Prop_chi_3fold} c), and this implies the smoothness of the moduli space.
Furthermore, by the same Proposition, $\chi(\EE\otimes \EE^{\vee})=\chi(\EE_H\otimes \EE_H^{\vee})=-r^2$ by Corollary \ref{corchi}, since $c_1(\EE)=rH$.
For $\EE$ stable or simple we have $h^0(\EE\otimes \EE^{\vee})=1,$ and $h^2(\EE\otimes \EE^{\vee})=h^3(\EE\otimes \EE^{\vee})=0$
by Proposition \ref{Prop_chi_3fold} c), so $h^1(\EE\otimes \EE^{\vee})=r^2+1$ is the dimension of the moduli space.

It remains to show the existence. We proceed by induction on $r,$ the cases $r=2,3$ having been done above. So let $r\geq 4,$
and choose $\EE$ stable of rank 2, and $\FF$ stable of rank $r-2,$ different from $\EE.$ Then $h^i(\EE\otimes \FF^{\vee})=0$ for
$i=0,2,3,$ so $h^1(\EE\otimes \FF^{\vee})=-\chi(\EE\otimes \FF^{\vee})=-\chi(\EE_H\otimes \FF_H^{\vee})=2(r-2)$ by Corollary \ref{corchi}.
In particular, this number is positive, so there exist nonsplit extensions
$$0\lra \EE \lra \GG \lra \FF \lra 0,$$
and the new bundle $\GG$ will be a simple Ulrich bundle of rank $r$ (see Lemma \ref{lemsimple}). We consider the modular family of these simple bundles,
which will be smooth of dimension $r^2+1$ by the above observations.

If the general simple bundle in this family is not stable, it must have the same splitting type as the ones just constructed. However, the dimension of the family
of extensions above is

$$\begin{array}{rl}
& \dim\{\EE\}+\dim\{\FF\}+\dim(\Ext^1(\FF, \EE))-1\\
=&2^2+1+(r-2)^2+1+2(r-2)-1\\
=& r^2-2r+5.
\end{array}
$$
Since $r\geq 4,$ this number is strictly less than $r^2+1.$ We conclude that the general simple bundle of rank $r$ is stable, so stable bundles exist.
\end{proof}

Our next goal is to study the restriction map from Ulrich bundles on $Y$ to bundles on a hyperplane section $H.$ We will show in many cases that there is an
open set of stable bundles on $Y$ that restricts by an étale dominant map to stable bundles on $H.$

\begin{prop}\label{prop_etale_3fold}
Suppose that $\EE$ is a stable Ulrich bundle of rank $r$ on $Y$ with the property that $H^i(\EE \otimes \EE^{\vee}(-1))=0$ for all $i$
(in which case we say $\EE \otimes \EE^{\vee}(-1)$ has no cohomology). Then the restriction map from bundles on $Y$ to bundles on the general
hyperplane section $H$ induces an étale dominant map from an open subset of a modular family of stable rank $r$ Ulrich bundles on $Y$ to
a modular family of stable rank $r$ Ulrich bundles on $H.$
\end{prop}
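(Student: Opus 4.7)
The plan is to make restriction of bundles induce an étale morphism between modular families on $Y$ and $H$, and then cut down to the stable locus by a dimension count. First, tensoring $0 \lra \OO_Y(-1) \lra \OO_Y \lra \OO_H \lra 0$ with $\EE \otimes \EE^{\vee}$ yields
\begin{equation*}
0 \lra \EE \otimes \EE^{\vee}(-1) \lra \EE \otimes \EE^{\vee} \lra \EE_H \otimes \EE_H^{\vee} \lra 0,
\end{equation*}
and the hypothesis $H^{i}(\EE \otimes \EE^{\vee}(-1))=0$ for all $i$ forces isomorphisms $H^{i}(\EE \otimes \EE^{\vee}) \cong H^{i}(\EE_H \otimes \EE_H^{\vee})$ for every $i$. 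In particular $\EE_H$ is simple, it is Ulrich on the cubic surface $H$ by Lemma \ref{lemaHn}(i) with $c_1=rH$, and its deformation tangent space has dimension $r^{2}+1$, matching that of $\EE$.

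Second, the vanishing is open by upper semicontinuity, so it defines an open subset $U$ of the modular family of stable rank $r$ Ulrich bundles on $Y$ (smooth of dimension $r^{2}+1$ by Theorem \ref{teo_3fold}). On $U$ the restricted family $\{\EE_H\}$ consists of simple Ulrich bundles on $H$ with $c_{1}=rH$, and the universal property (c) of the modular family $\MM_{H}$ of such bundles on $H$ (smooth of dimension $r^{2}+1$ by Corollary \ref{cordim} and Remark \ref{Rem_modfam}) yields, after an étale cover $U'' \to U$, a morphism $\varphi \colon U'' \to \MM_{H}$. Its differential at a point over $\EE$ is the natural restriction $\Ext^{1}_{Y}(\EE,\EE) \to \Ext^{1}_{H}(\EE_{H},\EE_{H})$, identified with the isomorphism from step one; since source and target are smooth of equal dimension, $\varphi$ is étale.

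Third, within $\MM_{H}$ the open stable locus $\MM_{H}^{s}$ has dimension $r^{2}+1$ by Theorem \ref{teo_stable} (the divisor $rH$ satisfies its conditions: $rH \cdot L = r \leq 2r$, $rH \cdot T = 3r \geq 2r$, and $rH$ is not a multiple of $D_{0}=(4;2,1^{4},0)$). Its complement is swept out by non-split extensions of stable Ulrich bundles of strictly smaller rank. For a two-step extension with ranks $s+t=r$, Corollary \ref{corchi} gives $\dim \Ext^{1}(\GG,\FF)=st$ (both $\Ext^0$ and $\Ext^2$ vanish for distinct stable Ulrich bundles of the same slope), so the corresponding locus has dimension at most $s^{2}+t^{2}+st+1 = r^{2}-st+1 < r^{2}+1$; iterated multi-step extensions give loci of strictly smaller dimension by the same type of count applied inductively. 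Hence the open image of $\varphi$ cannot be contained in the non-stable locus and must meet $\MM_{H}^{s}$. Descending $\varphi^{-1}(\MM_{H}^{s})$ to $U$ yields the required open subset of the modular family on $Y$ mapping étale-dominantly onto the modular family of stable rank $r$ Ulrich bundles on $H$.

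The main obstacle I expect is the dimension bookkeeping for iterated, three-or-more-step simple non-split extensions on $H$, though the two-step case done above already suggests the pattern: each ``Ext contribution'' $st$ strictly reduces the dimension from the naive $r^{2}+1$, and iterating the construction only magnifies the deficit.
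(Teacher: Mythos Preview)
Your first two steps match the paper exactly: the short exact sequence gives the cohomology isomorphisms, $\EE_H$ is simple Ulrich, and the restriction map is \'etale between smooth modular families of the same dimension $r^2+1$.

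Your third step, however, is both more laborious than the paper's and contains a genuine gap. The paper finishes in one line: by Corollary~\ref{cor_orient} the modular family of simple rank~$r$ Ulrich bundles on $H$ with $c_1=rH$ contains a nonempty open subset of stable bundles; since the Ulrich family is irreducible (Corollary~\ref{cor_irred}), that open set is dense in every component, so the open \'etale image of $\varphi$ must meet it. No dimension count is needed.

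Your dimension count, by contrast, silently assumes that a destabilizing stable sub~$\FF$ of rank~$s$ has $c_1(\FF)=sH$: the formula $\dim\{\FF\}=s^2+1$ is $C^2-2s^2+1$ with $C^2=3s^2$, which forces $C=sH$. This is false already for $s=1$, where $c_1(\FF)$ is a twisted cubic class~$T$ with $T^2=1$, and it is not forced for any $s$. If you redo the count with arbitrary $c_1(\FF)=C$, $c_1(\GG)=D$, $C+D=rH$, you get
\[
(C^2-2s^2+1)+(D^2-2t^2+1)+(C.D-2st)-1=r^2+1-(C.D-2st),
\]
which is indeed strictly less than $r^2+1$ whenever $\Ext^1(\GG,\FF)\neq 0$. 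So the idea is salvageable, but you would still need to treat the case where $\GG$ is not stable (your acknowledged multi-step obstacle), and at that point you are essentially re-proving the (i)$\Rightarrow$(ii) direction of Theorem~\ref{teo_stable} for $D=rH$---a result you are already citing. The paper's shortcut is simply to invoke that result rather than redo it.
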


\begin{proof}
First we recall that the restriction of an Ulrich bundle $\EE$ on $Y$ to $H$ is also an Ulrich bundle $\EE_H$ (Lemma \ref{lemaHn}). The condition that
$\EE \otimes \EE^{\vee}(-1)$ has no cohomology implies that $H^i(\EE \otimes \EE^{\vee}) \lra H^i(\EE_H \otimes \EE_H^{\vee})$ is an isomorphism for all $i.$
If $\EE$ is stable, then $h^0(\EE \otimes \EE^{\vee})=1$ and $h^1(\EE \otimes \EE^{\vee})=r^2+1.$ Therefore the same is true for $\EE_H,$
hence $\EE_H$ is simple.
The condition that $\EE \otimes \EE^{\vee}(-1)$ has no cohomology is an open condition, so we obtain a morphism from an open subset of a
modular family of stable bundles on $Y$ to a modular family of simple bundles on $H$. This map induces an isomorphism on the Zariski tangent spaces at the point $\EE,$
hence is étale and dominant in a neighborhood of $\EE$. The modular family on $H$ contains a nonempty open subset of stable bundles
(Corollary \ref{cor_orient}), and the inverse image of this open set gives an open set of stable bundles on $Y$ which restricts by an étale dominant
map to stable bundles on $H$, as required.
\end{proof}

\begin{lema}\label{lem_normalbdl}
Let $\EE$ be a rank $r$ Ulrich bundle on $Y$ corresponding to a nonsingular curve $C$ via the exact sequence
$$0\lra \OO_Y^{r-1}\lra \EE \lra \JJ_C(r)\lra 0.$$
Then
\begin{itemize}
\item[a)] $H^i(\EE \otimes \EE^{\vee}(-1))=0$ for $i=0,3$
\item[b)] $H^i(\EE \otimes \EE^{\vee}(-1)) \cong H^{i-1}(\mathcal{N}_{C/Y}(-1))$ for $i=1,2$ where $\mathcal{N}_{C/Y}$ is the normal bundle of $C$ in $Y.$
\end{itemize}
\end{lema}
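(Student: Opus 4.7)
The plan is to prove (a) by $\mu$-semistability and Serre duality on $Y$, and to prove (b) by two cohomological reductions followed by an $H^0$-identification using the restriction of the Serre sequence to $C$. For (a), I would compute $H^0(\EE \otimes \EE^\vee(-1)) = \Hom(\EE, \EE(-1)) = 0$ since $\EE$ is $\mu$-semistable by Theorem~\ref{teo_semist} and $\mu(\EE(-1)) = \mu(\EE) - H^3 < \mu(\EE)$, so any nonzero morphism would contradict $\mu$-semistability. Then $H^3 \cong (H^0)^\vee = 0$ follows from Serre duality on $Y$, since $\omega_Y = \OO_Y(-2)$ makes $\EE \otimes \EE^\vee(-1)$ Serre-self-dual.

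For (b), the first reduction is to tensor the Serre sequence by $\EE^\vee(-1)$. Since $\EE^\vee(2)$ is Ulrich by Lemma~\ref{lemaHn}(iv), the twist $\EE^\vee(-1) = \EE^\vee(2)(-3)$ lies in the $H^i$-total-vanishing range $n \in [-3,-1]$ (combining the ACM property with Lemma~\ref{lemaHn}(ii) and the fact that $\chi = 3r\binom{n+3}{3}$ vanishes there). Thus $H^i(\EE \otimes \EE^\vee(-1)) \cong H^i(\JJ_C \otimes \EE^\vee(r-1))$ for all $i$. The second reduction uses the ideal-sheaf SES $0 \to \JJ_C \otimes \EE^\vee(r-1) \to \EE^\vee(r-1) \to \EE^\vee(r-1)|_C \to 0$ together with the vanishing $H^j(\EE^\vee(r-1)) = 0$ for $j = 1,2,3$ (ACM for $j=1,2$, and $r-3\geq -3$ for $j=3$), giving $H^2 \cong H^1(\EE^\vee(r-1)|_C)$ and $H^1 \cong \coker(H^0(\EE^\vee(r-1)) \hookrightarrow H^0(\EE^\vee(r-1)|_C))$.

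The geometric input is then to identify $\EE^\vee(r-1)|_C$. Restricting the Serre sequence to $C$ and computing the Tor term $\operatorname{Tor}_1(\JJ_C(r), \OO_C) = \det \NN_{C/Y}^\vee(r) = \omega_C^{-1}(r-2)$ gives the 4-term sequence $0 \to \omega_C^{-1}(r-2) \to \OO_C^{r-1} \to \EE|_C \to \NN_{C/Y}^\vee(r) \to 0$; splitting in the middle, dualizing on $C$, and twisting by $r-1$ yields
$$0 \to \NN_{C/Y}(-1) \to \EE^\vee(r-1)|_C \to \mathcal{A}^\vee(r-1) \to 0, \qquad 0 \to \mathcal{A}^\vee(r-1) \to \OO_C(r-1)^{r-1} \to \omega_C(1) \to 0.$$

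The crux step is to check that the composite $H^0(\EE^\vee(r-1)) \to H^0(\EE^\vee(r-1)|_C) \to H^0(\mathcal{A}^\vee(r-1))$ is an isomorphism: both sides are kernels of natural maps into $H^0(\omega_C(1))$ --- from $H^0(\OO_Y(r-1))^{r-1}$ on $Y$ (via the dualized Serre sequence $0 \to \OO_Y(-1) \to \EE^\vee(r-1) \to \OO_Y(r-1)^{r-1} \to \omega_C(1) \to 0$), and from $H^0(\OO_C(r-1))^{r-1}$ on $C$ --- and the restriction $H^0(\OO_Y(r-1)) \to H^0(\OO_C(r-1))$ is an isomorphism because $C$ is ACM (surjectivity) and $H^0(\JJ_C(r-1)) = 0$ (from the twisted Serre sequence $0 \to \OO_Y(-1)^{r-1} \to \EE(-1) \to \JJ_C(r-1) \to 0$ combined with the Ulrich vanishing $H^i(\EE(-1)) = 0$). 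Together with $H^1(\mathcal{A}^\vee(r-1)) = 0$ --- which follows from $H^1(\OO_C(r-1)) = 0$, itself derived from $H^2(\JJ_C(r-1)) = 0$ via the same twisted Serre sequence --- the long exact sequence of the first SES above then yields $H^1(\EE \otimes \EE^\vee(-1)) \cong H^0(\NN_{C/Y}(-1))$ and $H^2(\EE \otimes \EE^\vee(-1)) \cong H^1(\NN_{C/Y}(-1))$. The main obstacle is the commutative-diagram bookkeeping linking the restriction of the 4-term sequence on $Y$ to $C$ (with its Tor contribution) with the split SESs on $C$, in order to ensure that the $H^0$-identification propagates correctly through the long exact sequences.
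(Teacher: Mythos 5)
Your proposal follows essentially the same route as the paper's proof: the same two cohomological reductions (first tensoring the Serre sequence with $\EE^{\vee}(-1)$, which has no cohomology since $\EE^{\vee}(2)$ is Ulrich, then using the ideal-sheaf sequence for $C$), the same Tor computation leading to the four-term sequence $0 \lra \NN_{C/Y}(-1) \lra \EE^{\vee}_C(r-1) \lra \OO_C(r-1)^{r-1} \lra \omega_C(1) \lra 0$, and the same key identification of $H^0(\EE^{\vee}(r-1))$ with $H^0$ of the image sheaf on $C$ via the isomorphism $H^0(\OO_Y(r-1)) \cong H^0(\OO_C(r-1))$ (which is the role of the map $\gamma$ in the paper's diagram). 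Your part (a) ($\mu$-semistability for $H^0$, Serre self-duality for $H^3$) is a harmless variant of the paper's argument, which instead realizes $\EE\otimes\EE^{\vee}(-1)$ as a quotient of $\EE^{\vee}(-1)^{3r}$ to kill $H^3$ and then dualizes.

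The one step whose stated justification is insufficient is the vanishing $H^1(\mathcal{A}^{\vee}(r-1))=0$. From the sequence $0 \lra \mathcal{A}^{\vee}(r-1) \lra \OO_C(r-1)^{r-1} \lra \omega_C(1) \lra 0$ one gets an exact sequence $H^0(\OO_C(r-1))^{r-1} \lra H^0(\omega_C(1)) \lra H^1(\mathcal{A}^{\vee}(r-1)) \lra H^1(\OO_C(r-1))^{r-1}$, so $H^1(\OO_C(r-1))=0$ kills only the right-hand contribution; you still need the surjectivity of $H^0(\OO_C(r-1))^{r-1} \lra H^0(\omega_C(1))$, which is a genuine fact to check (for instance for $r=3$ the target already has dimension $21$). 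It does hold, and it is provable with tools you have already set up: on $Y$ the map $H^0(\OO_Y(r-1))^{r-1} \lra H^0(\omega_C(1))$ is surjective because its cokernel injects into $H^1$ of $\coker(\OO_Y(-1)\lra\EE^{\vee}(r-1))$, which vanishes since $H^1(\EE^{\vee}(r-1))=0$ (ACM) and $H^2(\OO_Y(-1))=0$; transporting this through your isomorphism $H^0(\OO_Y(r-1))\cong H^0(\OO_C(r-1))$ gives the surjectivity on $C$. This is exactly what the paper's commutative diagram encodes, so the gap is one of bookkeeping rather than of substance, but as written the deduction ``$H^1(\mathcal{A}^{\vee}(r-1))=0$ follows from $H^1(\OO_C(r-1))=0$'' is not valid.
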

\begin{proof}
a) Since $\EE$ is a quotient of $\OO_Y^{3r}$, it follows that  $\EE \otimes \EE^{\vee}(-1)$ is a quotient of $ \EE^{\vee}(-1)^{3r}.$
But since $\EE^{\vee}(2)$ is an Ulrich sheaf, this sheaf has no $H^3,$ and it follows that $H^3(\EE \otimes \EE^{\vee}(-1))=0.$ By duality also
$H^0(\EE \otimes \EE^{\vee}(-1))=0.$

b) Tensoring the exact sequence above with $\EE^{\vee}(-1)$ we get
$$0\lra \EE^{\vee}(-1)^{r-1}\lra \EE \otimes \EE^{\vee}(-1) \lra \EE^{\vee}\otimes \JJ_C(r-1)\lra 0.$$
Now $\EE^{\vee}(2)$ is Ulrich, so $\EE^{\vee}(-1)$ has no cohomology, and $H^i(\EE \otimes \EE^{\vee}(-1))\cong H^i(\EE^{\vee}\otimes \JJ_C(r-1))$
for all $i$.

The dual of the sequence for $\EE$, twisted by $r-1$ is
$$0\lra \OO_Y(-1)\lra \EE^{\vee}(r-1) \lra \OO_Y(r-1)^{r-1} \lra \omega_C(1)\lra 0.$$
Tensoring with $\OO_C$ this gives
\begin{equation}\label{seq_1_sect5}
\EE_C^{\vee}(r-1) \lra \OO_C(r-1)^{r-1} \lra \omega_C(1)\lra 0.
\end{equation}
On the other hand, tensoring the original sequence with $\OO_C$ gives
$$\OO_C^{r-1}\lra \EE_C\lra \JJ_C/\JJ_C^2(r)\lra 0,$$
and since these are locally free sheaves on $C,$ we can dualize and twist by $r-1$ to get
\begin{equation}\label{seq_2_sect5}
0\lra \mathcal{N}_{C/Y}(-1)\lra \EE_C^{\vee}(r-1) \lra \OO_C(r-1)^{r-1}.
\end{equation}
The map in the middle of sequences  (\ref{seq_1_sect5}) and (\ref{seq_2_sect5}) is the same, so we can combine to get
\begin{equation*}
0\lra \mathcal{N}_{C/Y}(-1)\lra \EE_C^{\vee}(r-1) \lra \OO_C(r-1)^{r-1}\lra \omega_C(1)\lra 0.
\end{equation*}
Finally, we tensor the sequence $0\ra \JJ_C\ra \OO_Y \ra \OO_C\ra 0$ with $\EE^{\vee}(r-1)$ and put our sequences together in the following diagram:
$$\begin{array}{ccccccc}
&  &  & 0 &  & 0 &  \\
&  &  & \downarrow &  & \downarrow &  \\
&  &  & \OO_Y &  & \mathcal{N}_{C/Y}(-1) &  \\
   &  &    & \downarrow &   & \downarrow &  \\
 0 \lra  & \EE^{\vee}\otimes \JJ_C(r-1) & \lra   &  \EE^{\vee}(r-1) & \lra   & \EE_C^{\vee}(r-1) & \lra 0  \\
& \downarrow &    & \downarrow \alpha &   & \downarrow \beta&  \\
0 \lra  & \JJ_C(r-1)^{r-1} & \lra   &  \OO_Y(r-1)^{r-1} &  \srel{\gamma}{\lra}   & \OO_C(r-1)^{r-1} & \lra 0  \\
&  &    & \downarrow &   & \downarrow &  \\
&  &    & \omega_C(1) &  \srel{\cong}{\lra}   &  \omega_C(1)&  \\
&  &    & \downarrow &   & \downarrow &  \\
&  &    & 0 &   & 0 &
\end{array}$$
Since $\EE$ is Ulrich, it follows that $h^i(\JJ_C(r-1))=0$ for all $i.$
Hence the map $\gamma$ induces an isomorphism on cohomology, and therefore  an isomorphism on cohomology of $\im \alpha$ to $\im \beta.$
Since $\OO_Y(-1)$ has no cohomology, $\alpha$ also induces an isomorphism of cohomology from $\EE^{\vee}(r-1)$ to $\im \alpha.$
It follows that $H^0(\EE^{\vee} \otimes \JJ_C(r-1))=0$ and $H^1(\EE^{\vee} \otimes \JJ_C(r-1))\cong H^0(\mathcal{N}_{C/Y}(-1)).$
Furthermore, $\beta$ induces a surjective map on $H^0,$ and $H^1(\im \beta)=0,$ so $H^1(\mathcal{N}_{C/Y}(-1)) \ra H^1(\EE_C^{\vee}(r-1)) \ra
H^2(\EE^{\vee}\otimes \JJ_C(r-1))$ are all isomorphisms. Combining with the isomorphisms already proved above gives the desired conclusion b).
\end{proof}

\begin{cor}\label{cor_nocohrk23}
There exist rank 2 and rank 3 stable Ulrich bundles $\EE$ on a general cubic threefold $Y$ such that $\EE \otimes \EE^{\vee}(-1)$ has no cohomology.
\end{cor}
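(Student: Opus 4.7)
The plan is to apply Lemma \ref{lem_normalbdl} to reduce the statement to a cohomological condition on the normal bundle of the curve associated to $\EE$ by the Serre construction: a nondegenerate smooth quintic elliptic curve in the rank 2 case (as in Proposition \ref{prop_rk2_3fold}), and the smooth ACM curve of degree $12$ and genus $10$ produced in the Appendix in the rank 3 case (as in Proposition \ref{prop_rk3_3fold}). By that lemma, $H^i(\EE \otimes \EE^\vee(-1))$ vanishes automatically for $i=0,3$, while for $i=1,2$ it reduces to $H^{i-1}(\mathcal{N}_{C/Y}(-1))$. So the task is to show $h^0(\mathcal{N}_{C/Y}(-1)) = h^1(\mathcal{N}_{C/Y}(-1)) = 0$ for an appropriate $C$.

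First I would compute the Euler characteristic. Since $\omega_Y = \OO_Y(-2)$, adjunction gives $\det \mathcal{N}_{C/Y} = \omega_C(2)$, hence $\det \mathcal{N}_{C/Y}(-1) = \omega_C$, and Riemann--Roch yields
\[
\chi(\mathcal{N}_{C/Y}(-1)) = (2g-2) + 2(1-g) = 0,
\]
independent of $\deg C$. Therefore it suffices to prove $h^0(\mathcal{N}_{C/Y}(-1)) = 0$ for a single curve of each type, since this vanishing is an open condition on the (irreducible) component of the Hilbert scheme parametrizing such curves and will then pass to a general member by semicontinuity; this in turn produces the desired stable Ulrich bundle via the Serre construction (with the Ulrich property open by the discussion in Section 2).

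For rank 2, I would work with the conormal sequence of $Y \subset \PP^4$ restricted to $C$,
\[
0 \to \mathcal{N}_{C/Y}(-1) \to \mathcal{N}_{C/\PP^4}(-1) \to \OO_C(2) \to 0,
\]
and show for a general nondegenerate quintic elliptic curve $C$ in $\PP^4$ that the induced map $H^0(\mathcal{N}_{C/\PP^4}(-1)) \to H^0(\OO_C(2))$ is injective, using the well-known splitting type of the normal bundle of such a curve. For rank 3, the most direct route is to extend the \texttt{Macaulay2} computation of Gei{\ss} and Schreyer in the Appendix, which already produces an explicit smooth ACM curve $C$ of degree $12$ and genus $10$ satisfying the generation property of Proposition \ref{prop_rk3_3fold}, to verify in addition the vanishing $h^0(\mathcal{N}_{C/Y}(-1)) = 0$ on that same example.

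The main obstacle is the rank 3 vanishing: the normal bundle of a smooth curve of degree $12$ and genus $10$ in a cubic threefold is too complicated to analyze by a direct geometric argument, so we are forced to rely on a computer-algebra verification on a specific example, and then invoke semicontinuity to obtain the existence statement on a general cubic threefold.
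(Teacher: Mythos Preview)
Your reduction via Lemma \ref{lem_normalbdl} and the Euler-characteristic computation $\chi(\mathcal{N}_{C/Y}(-1))=0$ are exactly right, and this is also the paper's route: the proof in the paper simply invokes Lemma \ref{lem_normalbdl} together with the Appendix, which already contains the verifications $h^0(\mathcal{N}_{C/Y}(-1))=h^1(\mathcal{N}_{C/Y}(-1))=0$ for \emph{both} the quintic elliptic curve (Theorem \ref{elliptic}) and the degree $12$, genus $10$ ACM curve (Theorem \ref{main}, part (3)). So there is nothing to ``extend'' in the Appendix for rank $3$; the computation is already there.

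The one substantive difference is your proposed treatment of rank $2$. The paper does \emph{not} give a direct geometric argument: the Appendix establishes the vanishing for the elliptic quintic by a \texttt{Macaulay2} computation over a finite field plus semicontinuity, and in fact proves the finer statement that $\mathcal{N}_{E/X}(-1)\cong L\oplus L^{-1}$ for a nontrivial $L\in\operatorname{Pic}^0(E)$. Your alternative via the sequence $0\to\mathcal{N}_{C/Y}(-1)\to\mathcal{N}_{C/\PP^4}(-1)\to\OO_C(2)\to 0$ is a reasonable line of attack, but the appeal to a ``well-known splitting type'' of $\mathcal{N}_{C/\PP^4}$ is too vague to count as a proof: the injectivity of $H^0(\mathcal{N}_{C/\PP^4}(-1))\to H^0(\OO_C(2))$ depends on both $C$ and the cubic $Y$ (the map is contraction with the defining cubic), and both sides are $10$-dimensional, so you are asking for an isomorphism that is not automatic from semistability of $\mathcal{N}_{C/\PP^4}$ alone. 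If you want to carry this through geometrically you would need to control the kernel explicitly for a generic choice of $Y\supset C$; the authors of the Appendix evidently judged the computer verification to be the cleaner path. Either way, once you accept the Appendix's computations your argument and the paper's coincide.
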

\begin{proof}
Indeed, this follows from Lemma \ref{lem_normalbdl} using the
computations of Gei{\ss} and Schreyer in the Appendix, since they
constructed ACM curves $C$ of degree 5 and genus 1 and of degree
12 and genus 10 having $H^{i}(\mathcal{N}_{C/Y}(-1))=0$ for
$i=0,1,$ curves that give rise to stable rank 2 and rank 3 Ulrich
bundles as in Propositions \ref{prop_rk2_3fold} and
\ref{prop_rk3_3fold}.
\end{proof}

\begin{prop}\label{prop_nocohomol_3fold}
For each $r \geq 2$, there is a stable rank $r$ Ulrich bundle
$\EE$ on the general cubic threefold $Y$ such that $\EE \otimes
\EE^{\vee}(-1)$ has no cohomology.
\end{prop}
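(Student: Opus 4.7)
I proceed by induction on $r$, with base cases $r = 2, 3$ supplied by Corollary \ref{cor_nocohrk23}. For $r \geq 4$, applying the inductive hypothesis, I choose stable Ulrich bundles $\EE_1$ of rank $2$ and $\EE_2$ of rank $r-2$ with $\EE_i \otimes \EE_i^{\vee}(-1)$ having no cohomology, and mimic the construction in the proof of Theorem \ref{teo_3fold}: form a non-split extension
\[
0 \lra \EE_1 \lra \GG \lra \EE_2 \lra 0
\]
with class $\epsilon \in \Ext^1(\EE_2, \EE_1)$, which by Theorem \ref{teo_3fold} yields a simple rank $r$ Ulrich bundle that is stable for generic $\epsilon$. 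Since the condition ``$\GG \otimes \GG^{\vee}(-1)$ has no cohomology'' is open in the modular family of simple Ulrich bundles of rank $r$ (semicontinuity, together with computable $\chi$ for fixed rank and Chern classes), it suffices to produce one such $\GG$ in the family where the condition holds.

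First I dualize the extension to get $0 \lra \EE_2^{\vee} \lra \GG^{\vee} \lra \EE_1^{\vee} \lra 0$, tensor with $\EE_j(-1)$, and apply the induction hypothesis to conclude
\[
H^i(\EE_j \otimes \GG^{\vee}(-1)) \cong H^i(\EE_j \otimes \EE_{3-j}^{\vee}(-1)) \quad \text{for } j=1,2 \text{ and all } i.
\]
Then I tensor the original extension with $\GG^{\vee}(-1)$; the resulting long exact cohomology sequence expresses $H^{\bullet}(\GG \otimes \GG^{\vee}(-1))$ in terms of the two cross-term cohomologies together with a connecting map $\delta_\epsilon$. Stability (slope comparison $\mu(\EE_2(1)) > \mu(\EE_1)$) gives $H^0 = 0$ on each cross term, and Serre duality gives $H^3 = 0$; Proposition \ref{Prop_chi_3fold}(a) gives $\chi = 0$, forcing $h^1 = h^2$ on each cross term. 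Assembling the four-term exact sequence, the desired vanishing $H^i(\GG \otimes \GG^{\vee}(-1)) = 0$ for $i = 1, 2$ is equivalent to $\delta_\epsilon$ being an isomorphism.

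By Serre duality the source $H^1(\EE_2 \otimes \EE_1^{\vee}(-1))$ and target $H^2(\EE_1 \otimes \EE_2^{\vee}(-1))$ of $\delta_\epsilon$ have the same dimension, and $\delta_\epsilon$ is given by cup product with $\epsilon$, depending linearly on $\epsilon$. Hence non-degeneracy for generic $\epsilon$ will follow from non-degeneracy for some single $\epsilon$, since the locus of $\epsilon$ where $\det \delta_\epsilon = 0$ is Zariski closed. The \emph{main obstacle} is precisely this non-degeneracy. I expect to attack it either by a direct cup-product computation using the two-term linear resolutions of $\EE_1$ and $\EE_2$ on $\PP^4$ (which make the relevant $\Ext$-spaces explicit), or, if that computation proves unwieldy, by a semicontinuity argument anchored in the explicit Gei{\ss}--Schreyer ACM-curve constructions of the Appendix: verify non-degeneracy of $\delta_\epsilon$ in low-rank situations via a \texttt{Macaulay2} check, then propagate to all higher ranks via openness of the no-cohomology condition together with the irreducible modular families of simple Ulrich bundles.
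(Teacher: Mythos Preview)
Your induction hypothesis is too weak, and the reduction to ``$\delta_\epsilon$ is an isomorphism'' is incorrect. After tensoring $0\to\EE_1\to\GG\to\EE_2\to0$ with $\GG^{\vee}(-1)$ and using your identifications $H^i(\EE_j\otimes\GG^{\vee}(-1))\cong H^i(\EE_j\otimes\EE_{3-j}^{\vee}(-1))$, the long exact sequence in degrees $1,2$ is the \emph{six}-term sequence
\[
0\to H^1(\EE_1\otimes\EE_2^{\vee}(-1))\to H^1(\GG\otimes\GG^{\vee}(-1))\to H^1(\EE_2\otimes\EE_1^{\vee}(-1))\xrightarrow{\delta_\epsilon} H^2(\EE_1\otimes\EE_2^{\vee}(-1))\to H^2(\GG\otimes\GG^{\vee}(-1))\to H^2(\EE_2\otimes\EE_1^{\vee}(-1))\to 0,
\]
since the flanking $H^0$ and $H^3$ vanish. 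Even if $\delta_\epsilon$ is an isomorphism, the outer terms $H^1(\EE_1\otimes\EE_2^{\vee}(-1))$ and $H^2(\EE_2\otimes\EE_1^{\vee}(-1))$ inject into and surject from $H^1(\GG\otimes\GG^{\vee}(-1))$ and $H^2(\GG\otimes\GG^{\vee}(-1))$ respectively. So the vanishing you want is equivalent to the \emph{cross-term} $\EE_1\otimes\EE_2^{\vee}(-1)$ having no cohomology, not to any property of $\delta_\epsilon$; and that cross-term condition is nowhere in your induction hypothesis.

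The paper's fix is exactly to strengthen the induction: fix once and for all a rank-$2$ stable Ulrich bundle $\EE_0$ with $\EE_0\otimes\EE_0^{\vee}(-1)$ acyclic, and prove by induction that for each $r\ge 2$ there is a stable rank-$r$ Ulrich bundle $\FF$ with \emph{both} $\FF\otimes\FF^{\vee}(-1)$ and $\FF\otimes\EE_0^{\vee}(-1)$ acyclic. In the inductive step one takes the extension $0\to\EE_0\to\GG\to\FF_0\to0$; the extra cross hypothesis $\FF_0\otimes\EE_0^{\vee}(-1)$ acyclic (together with its Serre-dual statement) kills all the outer terms, and the argument goes through with no obstacle at all. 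The price is a new base case: one must produce a rank-$3$ bundle $\FF$ with $\FF\otimes\EE_0^{\vee}(-1)$ acyclic, and this is precisely what Proposition~\ref{cohomologyExtension} of the Appendix supplies via the \texttt{Macaulay2} verification that $H^i(\EE_0\otimes\JJ_{C/Y})=0$ for $i=1,2$.
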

\begin{proof}
Let $\EE_0$ be a stable rank 2 Ulrich bundle such that $\EE_0
\otimes \EE_0^{\vee}(-1)$ has no cohomology (Corollary
\ref{cor_nocohrk23}). We will prove by induction the following
statement

($\ast$) For each $r\geq 2$ there is a stable rank $r$ Ulrich
bundle $\FF$ on $Y$, $\FF\ncong \EE_0$, such that $\FF \otimes
\FF^{\vee}(-1)$ and $\FF \otimes \EE_0^{\vee}(-1)$ have no
cohomology.

The condition of having no cohomology is an open condition, so for $r=2$ we can take $\FF$ to be a deformation of $\EE_0.$ Then by semicontinuity, both
$\FF \otimes \FF^{\vee}(-1)$ and
$\FF \otimes \EE_0^{\vee}(-1)$ will have no cohomology.

For $r=3,$ we make use of the third section of the Appendix, which
shows that on a general cubic threefold $Y$, there are curves $E$
and $C$ as in the earlier theorems \ref{elliptic} and \ref{main}
respectively, such that if $\EE_0$ (changing notation) is the rank
2 bundle corresponding to $E:$
$$0\lra \OO_Y\lra \EE_0\lra \JJ_{E/Y}(2)\lra 0,$$
then we have also the additional property that $H^i(\EE_0 \otimes
\JJ_{C/Y} )=0$ for $i=1,2.$ Let $\FF$ be a stable rank 3 bundle
corresponding to $C$ (as in Proposition \ref{prop_rk3_3fold}):
$$0\lra \OO_Y^2\lra \FF\lra \JJ_{C/Y}(3)\lra 0.$$
Tensoring with $\EE_0^{\vee}(-1)$ we have
$$0\lra \EE_0^{\vee}(-1)^2\lra \FF \otimes \EE_0^{\vee}(-1)\lra \EE_0^{\vee}(-1) \otimes\JJ_{C/Y}(3)\lra 0.$$
Now $\EE_0$ has rank 2, so $\EE_0^{\vee}\cong \EE_0(-2).$ Thus
$\EE_0^{\vee}(-1)\cong \EE_0(-3),$ which has no cohomology.
Furthermore, since $\FF$ and $\EE_0$ are distinct stable bundles,
already $H^0(\FF\otimes \EE_0^{\vee})=0,$ so also $H^0(\FF\otimes
\EE_0^{\vee}(-1))=0,$ and by duality also $H^3(\FF\otimes
\EE_0^{\vee}(-1))=0.$ To show therefore that $\FF\otimes
\EE_0^{\vee}$ has no cohomology, we have only to check the
vanishing of $H^i$ for $i=1,2.$ Since $\EE_0^{\vee}(-1)$ has no
cohomology, the groups on question are isomorphic to $H^i(
\EE_0^{\vee}(-1)\otimes \JJ_{C/Y}(3))=H^i( \EE_0\otimes
\JJ_{C/Y}),$ and these are zero by Proposition
\ref{cohomologyExtension} of the Appendix. We have shown that
$\FF\otimes \FF^{\vee}(-1)$ has no cohomology earlier (Corollary
\ref{cor_nocohrk23}). (Note that at this step we have redefined
the rank 2 bundle $\EE_0$ chosen before, but we can just as well
use this one from the beginning.)

For $r\geq 4,$ choose by the induction hypothesis a stable bundle
$\FF_0$ of rank $r-2,$ different from $\EE_0,$ such that $\FF_0
\otimes \FF_0^{\vee}(-1)$ and $\FF_0 \otimes \EE_0^{\vee}(-1)$
have no cohomology. As in the proof of existence of stable bundles
(Theorem \ref{teo_3fold}), consider an extension
$$0\lra \EE_0\lra \GG \lra \FF_0\lra 0.$$
Then $\GG$ will be simple of rank $r.$ Tensoring with
$\EE_0^{\vee}(-1)$ and using our hypotheses on $\EE_0$ and
$\FF_0$, we see that $\GG\otimes \EE_0^{\vee}(-1)$ has no
cohomology. Similarly tensoring with $\FF_0^{\vee}(-1)$ we find
that $\GG\otimes \FF_0^{\vee}(-1)$ has no cohomology. (Note that
$\EE_0\otimes \FF_0^{\vee}(-1)=(\FF_0\otimes
\EE_0^{\vee}(-1))^{\vee}\otimes \omega_Y$ so by Serre duality it
has also no cohomology.) Now tensor $\GG(-1)$ with the dual
sequence
$$0\lra \FF_0^{\vee}\lra \GG^{\vee} \lra \EE_0^{\vee}\lra 0$$
to see that $\GG \otimes \GG^{\vee}(-1)$ has no cohomology.
Finally, as in Theorem \ref{teo_3fold} we can deform $\GG$ into a
stable bundle, call it $\FF$,  and by semicontinuity it will
satisfy $\FF \otimes \FF^{\vee}(-1)$ and $\FF \otimes
\EE_0^{\vee}(-1)$ have no cohomology.
\end{proof}

\begin{cor}\label{finalCorollary}
For each $r\geq 2$, there is a nonempty open set of a modular
family of stable rank $r$ Ulrich bundles on the general cubic
threefold $Y$ restricting by an étale dominant map to a modular
family of stable rank $r$ bundles on a hyperplane section $H.$
\end{cor}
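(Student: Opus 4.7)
The proof is essentially immediate from combining the two preceding propositions, so the plan is short and direct.

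First, I would invoke Proposition \ref{prop_nocohomol_3fold} to produce, for the given rank $r \geq 2$, a stable rank $r$ Ulrich bundle $\EE$ on the general cubic threefold $Y$ such that $\EE \otimes \EE^{\vee}(-1)$ has no cohomology. This is the crucial existence input; all of the hard work (the \texttt{Macaulay2} computations of Gei{\ss}--Schreyer for rank $2$ and $3$, together with the inductive extension construction for higher rank) has already been carried out in the previous proposition.

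Next, I would apply Proposition \ref{prop_etale_3fold} with this bundle $\EE$ as input. That proposition exactly asserts that whenever a stable rank $r$ Ulrich bundle $\EE$ on $Y$ satisfies the no-cohomology condition for $\EE \otimes \EE^{\vee}(-1)$, the restriction map from a modular family of stable rank $r$ Ulrich bundles on $Y$ to a modular family of stable rank $r$ Ulrich bundles on a general hyperplane section $H$ is \'etale and dominant in a neighborhood of $\EE$. Since the no-cohomology condition is open, this neighborhood is a nonempty open subset, giving the desired conclusion.

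There is no real obstacle: the corollary is simply the conjunction of the existence statement in Proposition \ref{prop_nocohomol_3fold} with the criterion in Proposition \ref{prop_etale_3fold}. The only thing to check, already covered in the proof of Proposition \ref{prop_etale_3fold}, is that the image of the \'etale map lands inside the stable locus on $H$, which is guaranteed by Corollary \ref{cor_orient} ensuring that the modular family on the cubic surface $H$ contains a nonempty open set of stable Ulrich bundles of rank $r$ with $c_1 = rH_H$.
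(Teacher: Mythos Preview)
Your proposal is correct and matches the paper's own proof exactly: the corollary is stated as an immediate consequence of Propositions \ref{prop_nocohomol_3fold} and \ref{prop_etale_3fold}, with the former supplying a stable rank $r$ Ulrich bundle with vanishing cohomology for $\EE\otimes\EE^{\vee}(-1)$ and the latter converting that into the desired \'etale dominant restriction map.
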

\begin{proof}
Follows from Propositions \ref{prop_nocohomol_3fold} and
\ref{prop_etale_3fold}.
\end{proof}


\bibliographystyle{amsplain}
\providecommand{\href}[2]{#2}

\appendix
%
\section{ACM Curves of small degree on cubic Threefolds}
%
%
\begin{center}
{by Florian Gei\ss}\footnote{Mathematik und Informatik,
Universit\"at des Saarlandes, Campus E2 4, 66123 Saarbr\"ucken,
Germany. email: fg@math.uni-sb.de. The first author is supported
by DFG grant Schr 307/5-1 of the second author within the priority
program SSP 1409.} { and Frank-Olaf Schreyer}\footnote{Mathematik
und Informatik, Universit\"at des Saarlandes, Campus E2 4, 66123
Saarbr\"ucken, Germany. email: schreyer@math.uni-sb.de}
\end{center}

\begin{abstract}
In this note we prove the following: A general elliptic normal
curve $E$ of degree $5$ on a general cubic threefold $X\subset
\mathbb{P}^4$ over an algebraically closed field of characteristic
$0$ has a twisted normal bundle which splits as
$\mathcal{N}_{E/X}(-1)\cong L \oplus L^{-1}$ with $L \in
\mathrm{Pic}^0(E)$, $L\not\cong \mathcal{O}_{E}$. In particular,
we have $H^1(E,\mathcal{N}_{E/X}(-1))=0$. Similarly, we prove the
vanishing $H^1(C,\mathcal{N}_{C/X}(-1))=0$ for a general
arithmetically Cohen-Macaulay curve $C$ of genus $10$ and degree
$12$ on a general cubic threefold $X \subset \mathbb{P}^4$.
Finally, we prove the existence of a nonempty open subset of
triples $C,E,X$ as above which satisfy in addition $E \cap C=
\emptyset$,
$\mathrm{Ext}^1_{\mathcal{O}_X}(\mathcal{I}_{E/X}(2),\mathcal{O}_X)$
is $1$-dimensional and for the nontrivial extension $\mathcal{F}$
the vanishing $H^1(\mathcal{F}\otimes
\mathcal{I}_{C/X})=H^2(\mathcal{F}\otimes \mathcal{I}_{C/X})=0$
holds. The proofs are based on a computation in \textit{Macaulay2}
over a finite field and semi-continuity.
\end{abstract}

\subsection{Quintic elliptic curves}
\begin{theorem}\label{elliptic}
Let $E\subset X \subset \mathbb{P}^4$ be a general pair of an
elliptic normal curve on a general cubic threefold over an
algebraically closed field of characteristic $0$. Then the twisted
normal bundle of $E$ in $X$ splits as $\mathcal{N}_{E/X}(-1)= L
\oplus L^{-1}$ with $L \in \mathrm{Pic}^0(E)$, $L\not\cong
\mathcal{O}_E$. In particular, $H^1(\mathcal{N}_{E/X}(-1))=0$.
\end{theorem}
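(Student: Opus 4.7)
The plan is to reduce the assertion to a pair of checks---a cohomology vanishing and a concrete splitting---that can be done on an explicit example in positive characteristic, and then to propagate both to the generic pair in characteristic zero by semi-continuity.

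First I set up the numerical data. Since $X$ is a smooth cubic threefold, $\omega_X \cong \mathcal{O}_X(-2)$, and since $E$ is an elliptic curve $\omega_E \cong \mathcal{O}_E$. Adjunction then gives $\det \mathcal{N}_{E/X} \cong \mathcal{O}_E(2)$ of degree $10$, so the bundle $\mathcal{V} := \mathcal{N}_{E/X}(-1)$ has rank $2$, trivial determinant, and Euler characteristic $0$; by Serre duality on $E$ we therefore have $h^0(\mathcal{V}) = h^1(\mathcal{V})$. Atiyah's classification of rank $2$ bundles on $E$ with trivial determinant lists the isomorphism classes as either $L \oplus L^{-1}$ with $L \in \mathrm{Pic}^0(E)$, or a nontrivial self-extension $F_2 \otimes L$ with $L^{\otimes 2} \cong \mathcal{O}_E$. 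Among these only $\mathcal{O}_E^{\oplus 2}$ has $h^0 = 2$ and $F_2$ itself has $h^0 = 1$; every other class has $h^0 = 0$. So once I show $h^0(\mathcal{V}) = 0$ I immediately get the stated vanishing $H^1(\mathcal{V}) = 0$, and it remains only to exclude the three exceptional Atiyah twists $F_2 \otimes L$ with $L$ nontrivial $2$-torsion in order to get the splitting $L \oplus L^{-1}$ with $L \not\cong \mathcal{O}_E$.

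Next I would produce an explicit pair $(E_0, X_0)$ over a small finite field $\mathbb{F}_p$. A convenient model is to realize $E_0$ as the zero locus of the $4 \times 4$ Pfaffians of a random skew-symmetric $5 \times 5$ matrix of linear forms on $\mathbb{P}^4_{\mathbb{F}_p}$, which gives a general elliptic normal quintic, and to pick at random a smooth cubic $X_0$ through $E_0$. In \texttt{Macaulay2} one then forms $\mathcal{V}_0 = \mathcal{N}_{E_0/X_0}(-1)$ from the dual of the relative conormal sequence, tests smoothness of $E_0$ and $X_0$, and verifies two things: (i) $h^0(\mathcal{V}_0) = 0$, which by the Atiyah reduction is equivalent to $H^1 = 0$ for this example; and (ii) the bundle $\mathcal{V}_0$ is decomposable, preferably by exhibiting an explicit sub-line-bundle $L \subset \mathcal{V}_0$ of degree $0$ with $L \not\cong \mathcal{O}_{E_0}$ together with a right inverse $\mathcal{V}_0 \to L$, so that $\mathcal{V}_0 \cong L \oplus L^{-1}$ and all three Atiyah exceptions are ruled out in one step.

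Finally I would propagate both statements to the generic pair in characteristic zero. The parameter scheme $\mathcal{H}$ of pairs $(E, X)$ with $E$ an elliptic normal quintic contained in a smooth cubic threefold $X \subset \mathbb{P}^4$ is irreducible over an open subset of $\mathrm{Spec}\,\mathbb{Z}$: elliptic normal quintics form an irreducible family via the Pfaffian description, and the cubics through a fixed quintic form a linear system. The condition $h^0(\mathcal{V}) = 0$ is upper semi-continuous, and restricted to this open locus the condition ``$\mathcal{V}$ is not one of the three exceptional Atiyah twists'' is also open, since being Atiyah maps $\mathcal{H}$ into a finite collection of isolated bundles in the moduli of rank $2$ trivial-determinant bundles. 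Both open conditions are nonempty by the \texttt{Macaulay2} example and therefore dense in $\mathcal{H}$; in particular they are nonempty in the geometric generic characteristic-zero fiber, which completes the proof. The main obstacle is step three: setting up a \texttt{Macaulay2} computation that not only tests $h^0(\mathcal{V}_0) = 0$ but also cleanly exhibits the direct-sum splitting of the twisted normal bundle, since the surrounding numerical and semi-continuity arguments are essentially routine.
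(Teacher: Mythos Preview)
Your proposal is correct and takes essentially the same approach as the paper: a \texttt{Macaulay2} verification over a finite field of both $h^0(\mathcal{N}_{E/X}(-1)) = 0$ and the explicit direct-sum splitting, followed by semi-continuity to lift both conclusions to the generic pair in characteristic zero. For your identified obstacle---cleanly exhibiting the splitting in step (ii)---the paper's concrete device is to compute that $H^0(\mathcal{E}nd(\mathcal{V}_0))$ is $2$-dimensional and then diagonalize a generic element of this pencil of endomorphisms, whose two distinct eigenvalues over $\mathbb{F}_p$ yield the line-bundle summands $L$ and $L^{-1}$ as eigenspaces; the non-vanishing of the discriminant of this characteristic polynomial is then the open condition that propagates the splitting (and excludes the nilpotent $F_2\otimes L$ case) to characteristic zero.
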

\begin{proof} First, we check the corresponding statement for a general pair $E\subset X\subset \mathbb{P}^4$ defined over a finite field $\mathbb{F}_p$ by computation in \textit{Macaulay2}. Initialization:~\\

{\small
\begin{verbatim}
     i1: p=101 -- a fairly small prime number
         Fp=ZZ/p -- a finite ground field
         R=Fp[x_0..x_4] -- coordinate ring of P^4
         setRandomSeed("beta")
\end{verbatim}}~\\

We start by randomly choosing a smooth cubic threefold $X$ and a
smooth quintic elliptic curve $E$ on it.
~\\
{\small
\begin{verbatim}
    i2 : m1=random(R^6,R^{6:-1});
         m=m1-transpose m1;
            -- a random skew symmetric 6x6 matrix of linear forms
         I=pfaffians(4,m_{0..4}^{0..4});
            -- the ideal an elliptic normal curve E
         singE=minors(codim I,jacobian I)+I;
         (codim I==3, degree I==5, genus I==1, codim singE==5)

    o2 = (true, true, true, true)

    i3 : f=pfaffians(6,m) -- ideal of X
         singf=ideal jacobian f;
         (codim f==1, degree f==3, codim singf == 5)

    o3 = (true, true, true)
\end{verbatim}}~\\

\noindent
Next, we compute the normal bundle and the first values of its Hilbert function:~\\
{\small
\begin{verbatim}
    i4 : I2=saturate(I^2+f);
         coN=prune (image( gens I)/ image gens I2);
             -- a module whose sheafication is the conormal sheaf
             -- of E in X
          N=Hom(coN,R^1/I); -- the module of global sections
                              -- of the normal bundle
          apply(toList(-1..2),i->hilbertFunction(i,N))

     o4 = {0, 10, 20, 30}
\end{verbatim}} ~\\
Hence, $\mathcal{N}_{E/X}(-1)$ has no sections, and since $\det
\mathcal{N}_{E/X}(-1) \cong \mathcal O_E$ has degree 0, we have
$H^1(\mathcal{N}_{E/X}(-1))=0$ as well. There are two
possibilities for the rank 2 vector bundle $\mathcal{N}_{E/X}(-1)$
according to the Atiyah classification \cite{Atiyah}. Either
$$\mathcal{N}_{E/X}(-1) \cong L_1\oplus L_2$$
with $L_2 \cong L_1^{-1} \in \mathrm{Pic}^0(E)$ or
$\mathcal{N}_{E/X}(-1)$ is an extension
$$ 0 \to L \to \mathcal{N}_{E/X}(-1)\to L \to 0$$
 with $ L \in \mathrm{Pic}^0(E)$  is 2-torsion. We check that we are in the first case:

{\small
\begin{verbatim}
     i5 : Nminus1 = N**R^{-1};
          time betti(EndN=Hom(Nminus1,Nminus1))

                  0  1
     o5 = total: 12 40
              0:  2  .
              1: 10 40
\end{verbatim}} ~\\

\noindent Thus, $H^0(\mathcal End(\mathcal N_{E/X}(-1)))$ is
two-dimensional. We compute the characteristic polynomial and the
eigenvalues of this pencil of endomorphisms. The command
SetRandomSeed("beta") above was chosen such that the
characteristic polynomial decomposes  completely over $\mathbb
F_p$ in this step of the computation.
~\\
{\small
\begin{verbatim}
     i6 : h0=homomorphism EndN_{0};
          h0a=map(R^10,R^10,h0)
          h1=homomorphism EndN_{1};
          h1a=map(R^10,R^10,h1) -- the corresponding matrices

     i7 : T=Fp[t] -- an extra ring
          chiA=det(sub(h0a,T)-t*sub(h1a,T));
              -- the characteristic polynomial
          chiAFactors = factor chiA



                  5        5
     o7 = (t - 47) (t - 14)


     i8 : -- We compute the eigenvalues and eigenspaces
          eigenValues=apply(2,c-> -((chiAFactors#c)#0)%ideal t)
          betti (N1=syz(h0a-eigenValues_0*h1a)
          betti (N2=syz(h0a-eigenValues_1*h1a)) -- the eigenspaces
          betti N
          L1=prune coker(presentation N|N1)**R^{-1};
          L2=prune coker(presentation N|N2)**R^{-1};
              -- the corresponding line bundles
          betti res L1 -- L1 (and L2) has a linear resolution

                 0  1  2 3
     o8 = total: 5 15 15 5
              1: 5 15 15 5
\end{verbatim}} ~\\
\noindent Finally, we check that $L1\oplus L2 \cong \mathcal
N_{E/X}(-1)$. {\small
\begin{verbatim}
     i9 : time betti (homL1L2=Hom(L1**L2,R^1/I))  -- used 32.25 seconds
              -- => L1 tensor L2 = O_E
           annihilator homL1L2==I -- check

     o9 = true

     i10 : time betti(iso=Hom(L1++L2,N))  -- used 9.22 seconds
           iso0=homomorphism iso_{0}
           iso1=homomorphism iso_{1}

     o10 = | 0 0 0 0 0 10  42  31  7   -9  |
           | 0 0 0 0 0 16  -27 -30 -21 -35 |
           | 0 0 0 0 0 6   -13 -19 -5  -29 |
           | 0 0 0 0 0 38  9   41  22  -30 |
           | 0 0 0 0 0 -3  -9  34  -31 1   |
           | 0 0 0 0 0 20  -4  -19 -5  6   |
           | 0 0 0 0 0 17  -2  -37 -6  -19 |
           | 0 0 0 0 0 -46 -18 -31 -26 -20 |
           | 0 0 0 0 0 43  -23 -47 -33 -43 |
           | 0 0 0 0 0 34  41  -35 -13 1   |

     i11 : det map(R^10,R^10,iso0+iso1)=!=0 -- N(-1) is isomorhic to L1++L2

     o11 = true

     i12 : prune ker(iso0+iso1)==0 and prune coker(iso0+iso1)==0
               -- kernel and cokernel are zero

     o12 = true
\end{verbatim}} ~\\
Since $L_1 \in \mathrm{Pic}^0(E)(\mathbb{F}_p)$ it has finite
order. We compute the order, just for fun, in the most naive way.
If the prime $p$ is larger a better method is necessary.
~\\
{\small
\begin{verbatim}
     i13 : time betti(twoL1=prune Hom(L2,L1))
           k=2;
           L1=twoL1;
           time while (rank target gens kL1=!=1) do (k=k+1;
               kL1=prune Hom(L2,kL1)); -- used 29 seconds
                                       -- in a case where the order k=52
           k -- the order of L1 in Pic E.

     o13 = 52

     i15 : betti kL1;
           kL1==R^1/I

     o15 = true
\end{verbatim}}~\\

To conclude from these computations the desired result in
characteristic zero, we argue that the computation above can be
seen as the reduction mod $p$ of computation over $\mathbb Z$. By
semi-continuity the vanishing
$$H^0(\mathcal{N}_{E_{\mathbb Q}/X_{\mathbb Q}}(-1))=H^1(\mathcal{N}_{E_{\mathbb Q}/X_{\mathbb Q}}(-1))=0$$
holds for the corresponding pair $(E_{\mathbb Q},X_{\mathbb Q})$
defined over $\mathbb Q$ as well. The splitting into line bundles
will  be defined over a quadratic extension field $K$ of $\mathbb
Q$ and the line bundle most likely will have infinite order in
$\mathrm{Pic}^0(E_{\mathbb Q})(K)$.
\end{proof}

\begin{remark} By the computation above we know
from the example $E \subset X$ defined over an open part
$\mathrm{Spec}\, \mathbb Z$, that the same result holds for
algebraically closed fields of positive characteristic except for
possible finitely many primes $p$. In principle, one could try to
compute these exceptional primes by computing an example over
$\mathbb Q$, and then try to verify the result one by one for the
remaining  primes. We believe that this is currently
computationally out of reach.
\end{remark}

\subsection{ACM curves of genus 10 and degree 12}

In this section we prove the following

\begin{theorem}\label{main}
The space of pairs $C\subset X \subset {\mathbb P}^4$ of smooth
arithmetically Cohen-Macaulay curves $C$ of degree $12$ and genus
$10$ on a cubic threefold $X$ is unirational and dominates the
moduli space $\mathcal{M}_{10}$ of curves of genus $10$ and the
Hilbert scheme of cubic threefolds in ${\mathbb P}^4$ with the
maps defined over $\mathbb Q$. Moreover, for a general pair $C
\subset X$ the following holds:
\begin{enumerate}

\item The line bundle
$ \mathcal{O}_C(1)$ is a smooth isolated point of the
Brill-Noether space $W^4_{12}(C)\subset \mathrm{Pic}^{14}(C)$.

\item
The module of global sections $\sum_{n\in \mathbb Z}
H^0(\omega_C(n))$ of the dualizing sheaf $\omega_C$ is generated
by its two sections in degree $-1$ as an $S=\sum_{n \in \mathbb Z}
H^0(\mathbb P^4,\mathcal O(n))$-module.

\item
The twisted normal bundle of $C$ in $X$ satisfies
$h^1(\mathcal{N}_{C/X}(-1))=0$.
\end{enumerate}
\end{theorem}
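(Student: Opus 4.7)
The plan is to exhibit an explicit pair $C_0 \subset X_0 \subset \mathbb{P}^4$ defined over a finite field $\mathbb{F}_p$, verify properties (1)--(3) by a \texttt{Macaulay2} computation on this example, and then invoke semi-continuity to deduce the corresponding statements for a general pair in characteristic zero. This mirrors the strategy of Theorem \ref{elliptic}. Each of (1)--(3) is either the vanishing of a cohomology group or the injectivity of a map between vector spaces whose source and target have constant dimension in a flat family of pairs, and hence is an open condition; once it holds for the reduction modulo $p$ of a $\mathbb{Z}_{(p)}$-model it therefore holds on a Zariski open neighborhood and in particular at the generic fiber.

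For the construction I would build $C_0$ directly from its desired canonical module. Condition (2) says $\omega_{C_0}$ admits a presentation $F_1 \to S(-1)^2 \to \omega_{C_0} \to 0$ over $S = \mathbb{F}_p[x_0,\dots,x_4]$. Dualizing via local duality, this predicts a minimal free resolution of $S/I_{C_0}$ whose Betti table can be read off from Riemann--Roch together with the Hilbert polynomial $12n - 9$. Concretely, I would choose a random $S$-linear map of the required Betti shape, extract $I_{C_0}$ from an associated Buchsbaum--Rim-style complex, and check computationally that $S/I_{C_0}$ is Cohen--Macaulay of codimension three with the correct Hilbert polynomial and that $\mathrm{Proj}(S/I_{C_0})$ is smooth. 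The cubic threefold $X_0$ is then taken to be a random element of $H^0(\JJ_{C_0/\mathbb{P}^4}(3))$, with smoothness verified via its Jacobian ideal.

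For property (2), I would compute $\omega_{C_0} = \Ext^3_S(S/I_{C_0}, S(-5))$, confirm $h^0(\omega_{C_0}(-1)) = 2$, and check that multiplication induces a surjection $H^0(\omega_{C_0}(-1)) \otimes H^0(\OO_{\PP^4}(1)) \to H^0(\omega_{C_0})$; because the resolution of $\omega_{C_0}$ bounds the degrees of its generators, a single multiplication step suffices. For (3), I would compute the conormal module, form $\NN_{C_0/X_0} = \HHom(\JJ_{C_0/X_0}/\JJ_{C_0/X_0}^2, \OO_{C_0})$, and read $h^1(\NN_{C_0/X_0}(-1))$ directly from the Hilbert function of its module of global sections. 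Property (1) then follows from (2) with no additional computation: the surjection in (2) factors through the Petri map $H^0(\OO_{C_0}(1)) \otimes H^0(\omega_{C_0}(-1)) \to H^0(\omega_{C_0})$, and both sides have dimension $10$ by Riemann--Roch and linear normality (which forces $h^0(\OO_{C_0}(1)) = 5$ once $h^0(\omega_{C_0}(-1)) = 2$). Hence the Petri map is bijective, and the standard Brill--Noether tangent-space description forces $\OO_{C_0}(1)$ to be a smooth isolated point of $W^4_{12}(C_0)$, whose expected dimension is $g - (r+1)(g - d + r) = 0$.

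The unirationality and dominance claims are then formal. The explicit construction exhibits the parameter space of pairs as the image of a rational map from an open subset of an affine space of matrices (together with the choice of cubic through $C$), giving unirationality. Smoothness of the parameter space at $(C_0, X_0)$ and dominance of the two forgetful maps to $\mathcal{M}_{10}$ and to the Hilbert scheme of cubic threefolds reduce, via standard deformation theory for curves in $\mathbb{P}^4$ and for pairs, to normal bundle vanishings that can be checked in the same \texttt{Macaulay2} session. The main obstacle, and the reason the theorem is nontrivial, is arranging condition (2): as the remark after Proposition \ref{prop_rk3_3fold} recalls, the simplest construction of a degree-$12$ genus-$10$ ACM curve via Gorenstein liaison to a conic fails precisely this generation test. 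The delicate point is therefore to design the resolution of $I_{C_0}$ so that a sufficiently generic choice of matrix does satisfy (2), a statement whose verification on a concrete example is unavoidable.
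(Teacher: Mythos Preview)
Your overall strategy---verify everything on an explicit example over $\mathbb{F}_p$ via \texttt{Macaulay2} and lift by semi-continuity---is exactly what the paper does, and your reduction of (1) to (2) via the Petri map is correct and matches the paper's treatment of $L_1$. But the heart of the argument is the \emph{construction} of $C_0$, and here your proposal and the paper diverge in a way that matters.

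The paper does not build $C_0$ from its resolution in $\mathbb{P}^4$. Instead it works in $\mathbb{P}^1\times\mathbb{P}^2$: a genus-$10$ curve equipped with a $\mathfrak{g}^1_6=|L_1|$ and a $\mathfrak{g}^2_9=|L_2|$ maps to $\mathbb{P}^1\times\mathbb{P}^2$, and the expected bigraded Betti numbers of the image allow one to reverse-engineer such curves from a random $3\times 9$ matrix over the Cox ring, then re-embed in $\mathbb{P}^4$ by $|\omega_C\otimes L_1^{-1}|$. This buys two things: the unirational parametrization is explicit, and dominance over $\mathcal{M}_{10}$ factors through $\mathcal{W}^1_6\times_{\mathcal{M}_{10}}\mathcal{W}^2_9$, so it follows from checking that $L_1,L_2$ are Petri-general together with irreducibility of the Hurwitz scheme $H_{6,10}$.

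Your proposal to ``choose a random $S$-linear map of the required Betti shape and extract $I_{C_0}$ from a Buchsbaum--Rim-style complex'' is the step that does not go through as written. The minimal resolution of $S/I_C$ has shape
\[
0 \to S(-6)^2 \to S(-4)^9 \to S(-3)^8 \to S,
\]
and this is \emph{not} the generic Betti table for any of the individual maps: a random $9\times 2$ matrix of quadrics, or a random $8\times 9$ matrix of linear forms, in five variables will not have syzygies of the required shape (a quick dimension count already shows the matrix space is far too large). There is no structure theorem for codimension-$3$ non-Gorenstein ACM ideals analogous to Buchsbaum--Eisenbud, so one cannot simply write down a complex from a generic matrix and expect it to be a resolution. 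This is precisely why the paper detours through $\mathbb{P}^1\times\mathbb{P}^2$: there the expected bigraded Betti numbers \emph{are} generic, so a random matrix works. Your instinct that the delicate point is arranging (2) is right, but the difficulty is one level deeper---it is producing \emph{any} ACM curve with this $\mathbb{P}^4$-resolution by a unirational recipe, not just one satisfying (2).

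Finally, your plan to prove dominance over $\mathcal{M}_{10}$ by a tangent-space computation is reasonable in principle (and indeed $H^1(T_{\mathbb{P}^4}|_C)=0$ follows from the Petri isomorphism you already noted), but it only shows that the Hilbert-scheme \emph{component} through $C_0$ dominates $\mathcal{M}_{10}$; you would still need your matrix family to dominate that component, which brings you back to the unresolved construction step.
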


As in section 1, we will prove the result by a computation over a
finite field and semi-continuity. The key ingredient is the
following unirational construction of the desired curves. Suppose
$C$ is a smooth projective curve of genus $10$ defined over a
field $k$ together with line bundles $L_1$, $L_2$ with $|L_1|$ a
$\mathfrak{g}^1_6$ and $|L_2|$ a $\mathfrak{g}^2_9$. Let $C'$
denote the image under the map
\begin{equation*}
C\xrightarrow{|L_1|,|L_2|} \mathbb{P}H^0(C,L_1)\times
\mathbb{P}H^0(C,L_2)=\mathbb{P}^1\times \mathbb{P}^2.
\end{equation*}
We say that $C$ is of maximal rank if the map $H^0
\mathcal{O}_{\PP^2}(n,m) \rightarrow H^0(L_1^{\otimes n} \otimes
L_2^{\otimes m})$ is of maximal rank for all $n,m\geq 1$. Under
the assumption of maximal rank of $C$ the image $C'$ is isomorphic
to $C$ and the Hilbert series of the truncated vanishing ideal
\begin{equation*}
I_{\mathrm{trunc}} = \bigoplus_{n\geq 3, m\geq 3}
H^0(\mathcal{I}_{C'}(n,m))
\end{equation*}
in the Cox-Ring $S=k[x_0,x_1,y_0,y_1,y_2]$ of $\mathbb{P}^1\times
\mathbb{P}^2$ is
\begin{equation*}
H_{I_{\mathrm{trunc}}}(s,t) =  \frac{3s^4t^5-6s^4t^4-3s^3t^5+3s^3t^4+4s^3t^3}{(1-s)^2(1-t)^3}.\\
\end{equation*}
In other words, we expect a bigraded free resolution of type
\begin{equation*} \label{res}
0 \rightarrow F_2 \rightarrow F_1 \rightarrow F_0 \rightarrow
I_{\mathrm{trunc}} \rightarrow 0
\end{equation*}
with modules $F_0=S(-3,-3)^4 \oplus S(-3,-4)^3$, $F_1=S(-3,-5)^3\oplus S(-4,-4)^6$ and $F_2=S(-4,-5)^3$.\\
Turning things around, we find the following unirational
construction for such curves: For a general map $M:F_2\rightarrow
F_1$  let $K$ be the cokernel of the dual map
$M^*:F_1^*\rightarrow F_2^*$. For the first terms of a minimal
free resolution of $K$ we expect
\begin{equation*}
 \ldots \rightarrow G \xrightarrow{N'} F_1^* \rightarrow F_2^* \rightarrow K \rightarrow 0
\end{equation*}
with $G=S(2,4)^3 \oplus S(3,3)^9 \oplus S(3,4)^3 \oplus S(4,2)^6$
. Composing $N'$ with a general map $F_0^*\rightarrow G$ and
dualizing again yields a map $N:F_1 \rightarrow F_0$. Finally,
$\ker(F_0^* \xrightarrow{N^*} F_1^*)\cong S$ and the entries of
the matrix $S \rightarrow F_0^*$ generate $I_{\mathrm{trunc}}$.
The following Code for \textit{Macaulay2} \cite{M2} realizes this
construction over an arbitrary field, here in particular for
random choices over a finite field $\mathbb{F}_p$:
~\\~\\
{\small
\begin{verbatim}
     i1 : setRandomSeed"I am feeling lucky"; -- initiate random generator
          p=32009; -- a prime number
          Fp=ZZ/p; -- a prime field
          S=Fp[x_0,x_1,y_0..y_2, Degrees=>{2:{1,0},3:{0,1}}];
              -- Cox ring of P^1 x P^2
          m=ideal basis({1,1},S); -- irrelevant ideal

     i2 : randomCurveGenus10Withg16=(S)->(
          M:=random(S^{6:{-4,-4},3:{-3,-5}},S^{3:{-4,-5}});
              -- a random map F1 <--M-- F2
          N':=syz transpose M; -- syzygy-matrix of the dual of M
          N:=transpose(N'*random(source N',S^{3:{3,4},4:{3,3}}));
          ideal syz transpose N) -- the vanishing ideal of the curve

     i3 : IC'=saturate(randomCurveGenus10Withg16(S),m);
\end{verbatim}}
~\\
As being of maximal rank is an open condition this computation proves the existence of a nonempty unirational component $H$ in the Hilbert scheme  $\mathrm{Hilb}_{(6,9),10}(\mathbb{P}^1\times \mathbb{P}^2)$ of curves  of bidegree $(6,9)$ and genus $10$.\\

By semi-continuity we get the first half of the following Theorem.

\begin{theorem}
The Hilbert scheme $\mathrm{Hilb}_{(6,9),10}(\mathbb{P}^1\times
\mathbb{P}^2)$ has a unirational component $H$ over $\mathbb Q$
that dominates the moduli space $\mathcal{M}_{10}$. \end{theorem}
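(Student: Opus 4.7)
The plan is to treat the two assertions separately. Unirationality is nearly immediate from the explicit construction preceding the theorem: the matrix $M \colon F_2 \to F_1$ ranges over an open subset of an affine space of bigraded polynomials of the prescribed bidegrees, and the rule $M \mapsto \mathrm{ker}(F_0^* \xrightarrow{N^*} F_1^*)$ defines a rational map $\Phi$ from this affine space to the Hilbert scheme; the \textit{Macaulay2} verification exhibits one smooth image point $[C_0]$, so the image of $\Phi$ lies in a single irreducible component $H$, which is therefore unirational. Since every step of the construction uses only polynomial operations valid over any base, $H$ is defined over $\mathbb{Q}$.

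For dominance of the forgetful morphism $\pi \colon H \to \mathcal{M}_{10}$, the plan is a dimension count anchored at $C_0$. First, I would compute $\dim_{[C_0]} H = 39$. Adjunction on $X = \mathbb{P}^1 \times \mathbb{P}^2$ (with $K_X = \mathcal{O}(-2,-3)$) gives $\deg \mathcal{N}_{C_0/X} = (2g-2) + 2\cdot 6 + 3\cdot 9 = 57$, so $\chi(\mathcal{N}_{C_0/X}) = 57 + 2(1-10) = 39$. A direct \textit{Macaulay2} computation that $h^1(\mathcal{N}_{C_0/X}) = 0$ then shows the Hilbert scheme is smooth of dimension exactly $39$ at $[C_0]$, and upper semi-continuity lifts this to a characteristic zero neighborhood. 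Next, a point of $\pi^{-1}([C])$ is an embedding $C \hookrightarrow \mathbb{P}^1 \times \mathbb{P}^2$ of bidegree $(6,9)$, equivalently a datum $(L_1, \phi_1, L_2, \phi_2)$ with $L_1 \in W^1_6(C)$, $L_2 \in W^2_9(C)$, and $\phi_i$ a basis of $H^0(L_i)$, modulo $\mathrm{PGL}_2 \times \mathrm{PGL}_3$ acting on the target. By Gieseker--Petri, a generic curve of genus $10$ satisfies $\dim W^1_6 = \rho(10,1,6) = 0$ and $\dim W^2_9 = \rho(10,2,9) = 1$, so the generic fiber of $\pi$ has dimension $0 + 1 + 3 + 8 = 12 = 39 - \dim \mathcal{M}_{10}$. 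Irreducibility of $H$ then forces $\dim \pi(H) \geq 27$, so $\pi$ is dominant.

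The main obstacle is to ensure that \emph{some} point of $\pi(H)$ actually realizes the fiber dimension $12$, i.e.\ that the image of $\pi$ is not entirely contained in a Brill--Noether special locus where the fiber would be strictly larger. I would handle this by a further \textit{Macaulay2} check on $C_0$ itself, computing $W^1_6(C_0)$ and $W^2_9(C_0)$ as degeneracy loci of suitable Brill--Noether matrices and verifying the expected dimensions, or alternatively by showing directly that every Brill--Noether generic curve of genus $10$ admits a pair $(L_1,L_2)$ of maximal rank giving rise to an embedding into $\mathbb{P}^1 \times \mathbb{P}^2$ that lies in $H$, so that dominance follows by construction. The remaining ingredients --- adjunction, semi-continuity, and the standard fiber-dimension comparison for irreducible varieties --- are routine.
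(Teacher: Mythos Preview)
Your proposal is correct and lands on the same crucial step as the paper: one must verify that the specific curve $C_0$ produced by the construction is Brill--Noether general for $W^1_6$ and $W^2_9$, so that the fibre of $\pi$ over $[C_0]$ has the expected dimension. The paper carries this out by checking Petri injectivity for $L_1$ and $L_2$ directly from the syzygy data already at hand: for $L_2$ via the plane model and the resolution of the singular locus $\Delta$, and for $L_1$ by observing from the minimal free resolution of $I_C \subset \PP^4$ that the two generators of $H^0(\omega_C(-1))$ admit no linear relation.

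The one methodological difference is that the paper does \emph{not} compute $\dim H$ via the normal bundle. Instead, having verified Petri at $(C_0,L_1,L_2)$, it factors $H \to \mathcal{M}_{10}$ through $Z = \mathcal{W}^1_6 \times_{\mathcal{M}_{10}} \mathcal{W}^2_9$; Petri injectivity makes $Z \to \mathcal{M}_{10}$ smooth at that point (hence open), and $H \to Z$ is a $\mathrm{PGL}_2 \times \mathrm{PGL}_3$-torsor over a neighbourhood, so dominance follows without any dimension count. Your route via $h^1(\mathcal{N}_{C_0/X})=0$ and the equality $39 - 12 = 27$ is equally valid but costs one extra \texttt{Macaulay2} computation; the paper's route trades that computation for the observation that a smooth morphism is open. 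Your alternative suggestion --- proving that \emph{every} Brill--Noether general curve admits a maximal-rank pair $(L_1,L_2)$ --- would be considerably harder and is not what the paper does.
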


\begin{proof}
The main missing ingredient is to prove that in our example above
the line bundles $L_1$ and $L_2$ will be behave like general line
bundles in $W^1_6(C)$ and $W^2_9(C)$ for a general curve $C$.
Recall the following facts from Brill-Noether theory \cite{ACGH}:
For a general smooth curve $C$ of genus $g$ the Brill-Noether loci
\begin{equation*}
W^r_d(C)=\{ L \in \mathrm{Pic}^d(C)\ |\ h^0(L)\geq r+1 \}
\end{equation*}
are non-empty and smooth away from $W^{r+1}_d(C)$ of dimension
$\rho$ if and only if the Brill-Noether number
\begin{equation*}
\rho = \rho(g,r,d) = g-(r+1)(g-d+r) \geq 0.
\end{equation*}
Moreover, $W^r_d(C)$ is connected if $\rho>0$ and the tangent
space at a linear series $L \in W^r_d(C) \smallsetminus
W^{r+1}_d(C)$ is the dual of the cokernel of the Petri-map
\begin{equation*}
H^0(C,L) \otimes H^0(C,\omega_C\otimes L^{-1}) \rightarrow
H^0(C,\omega_C).
\end{equation*}
Now let $\eta:C \rightarrow C'$ be a normalization of our given point $C' \in H$. $\eta$ will be an isomorphism, but we do not know this yet. We can check computationally that the linear systems $L_1 =\eta^*\mathcal{O}_{\mathbb{P}^1}(1)$ and $L_2=\eta^*\mathcal{O}_{\mathbb{P}^2}(1)$ are smooth points in the respective $W^{r_i}_{d_i}(C)$ for $i=1,2$:\\
In order to check $L_2$, we start by computing the plane model $\Gamma\subset \PP^2$ of $C'$: ~\\

{\small
\begin{verbatim}
     i4 : Sel=Fp[x_0,x_1,y_0..y_2,MonomialOrder=>Eliminate 2];
              -- eliminination order
          R=Fp[y_0..y_2]; -- coordinate ring of P^2
          IGammaC=sub(ideal selectInSubring(1,gens gb sub(IC',Sel)),R);
              -- ideal of the plane model
\end{verbatim}}~\\
We check that $\Gamma$ is a curve of desired degree and genus and its singular locus $\Delta$ consists only of ordinary double points:~\\

{\small
\begin{verbatim}
     i5 : distinctPoints=(J)->(
             singJ:=minors(2,jacobian J)+J;
             codim singJ==3)

     i6 : IDelta=ideal jacobian IGammaC + IGammaC; -- singular locus
          distinctPoints(IDelta)

     o6 = true

     i7 : delta=degree IDelta;
          d=degree IGammaC;
          g=binomial(d-1,2)-delta;
          (d,g,delta)==(9,10,18)

     o7 = true
\end{verbatim}}~\\
We compute the free resolution of $I_{\Delta}$:~\\
{\small
\begin{verbatim}
     i8 : IDelta=saturate IDelta;
          betti res IDelta

                 0 1 2
     o8 = total: 1 4 3
              0: 1 . .
              1: . . .
              2: . . .
              3: . . .
              4: . 3 .
              5: . 1 3
\end{verbatim}}~\\
(We can deduce that $\Gamma$ is irreducible from this information:
Suppose $\Gamma$ decomposes in two parts of degree $a$ and $b$
with $a+b=9$ and, say, $a< b$ then the intersection points of two
components would be among the points of $\Delta$. The cases
$(a,b)=(1,8)$ and $(2,7)$ are excluded because $I_\Delta$ is
generated by sextics, $(4,5)$ is excluded because $20 > 18$ and
$(3,6)$ is excluded because $\Delta$ is not a complete
intersection. Thus $C$ the normalization of $\Gamma$ is isomorphic
to a smooth irreducible curve of genus $g=10$, and $C'$ is smooth
because $10=g \le p_a C'  \le 10$.)

From Riemann-Roch we deduce $h^0(C,L_2)=3$ since
$h^1(C,L_2)=h^0(C,\omega_C\otimes L_2^{-1})=h^0({\mathbb
P}^2,\mathcal{I}_\Delta(5))=3$. The Petri map for $L_2$ can be
identified with
\begin{equation*}
H^0(\mathbb{P}^2, \mathcal{I}_{\Delta}(d-4)) \otimes
H^0(\mathbb{P}^2,\mathcal{O}_{\mathbb{P}^2}(1)) \rightarrow
H^0(\mathbb{P}^2,\mathcal{I}_{\Delta}(d-3)).
\end{equation*}
This map is injective since there are no linear relations among
the three quintic generators of $I_{\Delta}$. So $L_2 \in
W^3_9(C)$ is a smooth point of dimension $\rho_2=1$. \medskip

Turning to $L_1$, we compute the embedding $C \rightarrow {\mathbb P} H^0(C,\omega_C\otimes L_1^{-1}) = {\mathbb P}^4$ as follows~\\
{\small
\begin{verbatim}
     i9 : LK=(mingens IDelta)*random(source mingens IDelta, R^{10:{-6}});
              -- compute a basis of the Riemann-Roch space L(Omega_C)
          Pt=random(Fp^1,Fp^2); -- random point in P^1
          L1=substitute(IC',Pt|vars R); -- L1 is the fiber over Pt
          KD=LK*(syz(LK % gens L1))_{0..4};
              -- compute a basis of those elements in L(Omega_C) that
              -- vanish in L1
          T=Fp[z_0..z_4]; -- coordinate ring of P^4
          phiKD=map(R,T,KD); -- embedding
          IC=preimage_phiKD(IGammaC);
          degree IC==12 and genus IC==10
     o9 = true

     i10 : betti(FC=res IC)

                 0 1 2 3
     o10 = total: 1 8 9 2
               0: 1 . . .
               1: . . . .
               2: . 8 9 .
               3: . . . 2
\end{verbatim}}~\\
From the length of the resolution $F_C$ we see that the image of
$C$ in $\mathbb P^4$ is arithmetically Cohen-Macaulay. The dual
complex $\mathrm{Hom}_S(F_C,S(-5))$ is a resolution of
$\bigoplus_{n\in \mathbb Z} H^0(\omega_C(n))$. Thus this module is
generated by its two sections in degree $-1$ and
$h^0(L_1)=h^0(C,\omega_C(-1))=2$. The Petri map can be identified
with
\begin{equation*}
H^0(C,\omega_C(-1))  \otimes
H^0(\mathbb{P}^4,\mathcal{O}_{\mathbb{P}^4}(1)) \rightarrow
H^0(C,\omega_C).
\end{equation*}
Here, this map is an isomorphism, because there is no linear
relation among the two generators, and $L_1$ is a smooth isolated
point in $W^1_6(C)$.
Thus our random example over the finite field is as expected, and semi-continuity proves that the same is true for the triple $(C, L_1,L_2)$ defined over an open part of $\mathrm{Spec} \, \mathbb Z$ whose reduction mod $p$ is the given randomly selected curve.\\

The map $H \rightarrow \mathcal{M}_{10}$ factors over
$Z=\mathcal{W}^1_6 \times_{\mathcal{M}_{10}} \mathcal{W}^2_{9}$
and the  fiber of $H\rightarrow Z$ for a triple $(C,L_1,L_2)$
(without automorphisms) with $h^0(C,L_1)=2$ and $h^0(C,L_2)=3$
 is $\mathrm{PGL}(2)\times \mathrm{PGL}(3)$.
 The fiber dimension of $Z\rightarrow \mathcal{M}_g$ is $\rho_1+\rho_2=0+1=1$, as expected.
 \end{proof}

\begin{remark}
Note that $H$ dominates the Severi variety $V_{9,10}$ of reduced and irreducible plane curves of degree $9$ and genus $10$ as well as the Hurwitz scheme $H_{6,10}$ of $6$-gonal curves of genus $10$. \\
In fact, the outlined approach allows to prove the existence of
unirational components of
$\mathrm{Hilb}_{(d_1,d_2),g}(\mathbb{P}^1 \times \mathbb{P}^2)$
for several values $(d_1,d_2,g)$. Particularly, we find that the
Severi variety $V_{10,11}$ and the Hurwitz schemes $H_{6,g}$ of
$6$-gonal covers for $g\leq 40$ are unirational. The last
statement is proved using liaison in $\mathbb{P}^1\times
\mathbb{P}^2$, see \cite{Geiss}.
\end{remark}

\noindent {\it Proof of Theorem \ref{main}.} We are nearly done.
The embedding of
$$C \hookrightarrow \mathbb P H^0(C,\omega_C \otimes L_1^{-1}) \cong \mathbb P^4$$
is a curve which satisfies (1) and (2). Since $L_1$ and
equivalently $\mathcal O_C(1) \in W^4_{12}(C)$ is Petri general
this proves the existence of a unirational component
$$H' \subset \mathrm{Hilb}_{12t+1-10}(\mathbb{P}^4).$$  Since the Hurwitz scheme $H_{6,10}$ is irreducible,
we can conclude that the induced rational map
$H'//\mathrm{PGL}(5)\rightarrow \mathcal{M}_{10}$ is generically
finite of degree
\begin{equation*}
g! \prod_{i=0}^r \frac{i!}{(g-d+r+i)!} = 42 = \deg W^1_6(C)
\end{equation*}
as is well-known (\cite{ACGH}, Ch.V). Choosing a cubic threefold
containing $C$ is the same as choosing a point in the projective
space $\mathbb{P}H^0(\mathbb{P}^4,\mathcal{I}_C(3))$. Hence,
\begin{equation*}
V=\{ (C,X) \ |\ C\in H' \hbox{ ACM and }X \in
\mathbb{P}H^0(\mathbb{P}^4,\mathcal{I}_C(3)) \hbox{ smooth }\}
\end{equation*}
is unirational as well. For a random pair $(C,X)\in V$ we compute the normal sheaf $\mathcal{N}_{C/X}$ of $C$ in $X$ and check that $h^i(\mathcal{N}_{C/X}(-1))=0$ for $i=0,1$:~\\
{\small
\begin{verbatim}
     i11 : IX=ideal((mingens IC)*random(source mingens IC,T^{1:-3}));
           IC2=saturate(IC^2+X);
           cNCX=image gens IC/ image gens IC2; -- the conormal sheaf in X
           NCX=sheaf Hom(cNCX,T^1/IC); -- the normal sheaf in X

     i12 : HH^0 NCX(-1)==0 and HH^1 NCX(-1)==0

     o12 = true

     i14 : HH^0 NCX==Fp^24 and HH^1 NCX==0

     o14 = true
 \end{verbatim}}~\\
 With a similar computation for $\mathcal{N}_{C/\mathbb{P}^4}$ we check that $H'$ is a generically smooth component of the Hilbert scheme $\mathrm{Hilb}_{12t+1-10}(\mathbb{P}^4)$ of expected dimension $51$ and $C$ is a smooth point in $H'$.
~\\
{\small
\begin{verbatim}
      i15 : cNCP= prune(image (gens IC)/ image gens saturate(IC^2));
            NCP=sheaf Hom(cNCP,T^1/IC);
            HH^1 (NCP)==0 and HH^0 (NCP)==Fp^51
      o15 = true
 \end{verbatim}}~\\
 Consider the maps
 \begin{equation*}
 \xymatrix{ & V \ar[dr]^{\pi_2} \ar[dl]_{\pi_1} &  \\
            H' && \mathbb{P}H^0(\mathbb{P}^4,\mathcal{O}_{\mathbb{P}^4}(3)) \cong \mathbb P^{34}}
 \end{equation*}
 The fibre of $\pi_1$ over a point $C$ is exactly ${\mathbb P} H^0({\mathbb P}^4,\mathcal{I}_C(3)) \cong \mathbb P^7 $, hence $V$ is irreducible of dimension $58$.
 The map $\pi_2$ is smooth of dimension  $h^0(C, \mathcal{N}_{C/X})=24 $ at $(C,X)$. Thus $\pi_2$ is surjective.
 By semi-continuity the desired vanishing holds for the general curve on a general cubic.
\qed \medskip
\newpage
\subsection{Cohomology of Extensions}

In order to prove Corollary \ref{finalCorollary} for arbitrary
rank the following statement is needed:
\begin{proposition} \label{cohomologyExtension}
Let $k$ be an algebraically closed field of characteristic $0$.
There is an open subset $U$ of the space of triples $C,E \subset
X$ with $C$ and ACM curve of genus $10$ and degree $12$, $E$ an
elliptic normal curve of degree $5$ not meeting $C$ and $X$ a
smooth cubic threefold over $k$ with the following properties:
\begin{enumerate}
\item $U$ dominates the space $\mathbb{P}H^0(\mathbb{P}^4,\mathcal{O}_{\mathbb{P}^4}(3))$ of cubic threefolds and the spaces of pairs $E\subset X$ and $C\subset X$. In particular the pair $E \subset X$ and the pair $C \subset X$ satisfy all assertions of Theorem \ref{elliptic} and \ref{main} respectively.
\item For every triple $C,E\subset X$ in $U$ the extension group $\Ext_{\mathcal{O}_X}^1(\mathcal{I}_{E/X}(2), \mathcal{O}_X))$ is $1$-dimensional and for the non-trivial extension
$$
0 \to \mathcal{O}_X \to \mathcal{F} \to \mathcal{I}_{E/X}(2) \to 0
$$
we have the vanishing  $H^1(\mathcal{F}\otimes
\mathcal{I}_{C/X})=H^2(\mathcal{F}\otimes \mathcal{I}_{C/X})=0$.
\end{enumerate}
\end{proposition}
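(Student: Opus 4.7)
The plan is to establish the existence and openness of the conditions by a direct \texttt{Macaulay2} computation on a single example over a finite field, following the style of the preceding sections of the appendix, and then to conclude via semicontinuity.

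First, I would set up the parameter space. By Proposition \ref{prop_rk2_3fold} and Theorem \ref{main}, on a general smooth cubic threefold $X \subset \PP^4$ the Hilbert schemes of smooth quintic elliptic curves $E$ and of smooth ACM curves $C$ of degree $12$ and genus $10$ are non-empty of dimensions $10$ and $24$ respectively. Fixing $X$ and choosing $E, C$ independently on it yields triples $(C,E,X)$; the condition $E \cap C = \emptyset$ is open, and non-empty since both $E$ and $C$ are $1$-dimensional subvarieties of the $3$-fold $X$ and a generic member of each family can be deformed off a fixed curve. This produces a family of triples dominating both spaces constructed in Theorems \ref{elliptic} and \ref{main} as well as $\PP^{34}$.

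Second, I would carry out the cohomological reductions. Applying $\Hom(-, \OO_X)$ to $0 \to \II_{E/X}(2) \to \OO_X(2) \to \OO_E(2) \to 0$, using the ACM vanishings $H^i(\OO_X(-2)) = 0$ for $i \leq 2$ and Grothendieck-Serre duality on $X$ (with $\omega_X \cong \OO_X(-2)$), one obtains
$$\Ext^1(\II_{E/X}(2), \OO_X) \;\cong\; \Ext^2_X(\OO_E(2), \OO_X) \;\cong\; H^1(E, \OO_E)^\vee \;\cong\; H^0(E, \omega_E),$$
which is one-dimensional since $E$ is elliptic. Hence the non-trivial extension $\FF$ is uniquely determined up to isomorphism, and coincides with the rank $2$ Ulrich bundle of Proposition \ref{prop_rk2_3fold}; in particular $H^i(\FF) = 0$ for $i = 1, 2, 3$. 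Tensoring $0 \to \II_{C/X} \to \OO_X \to \OO_C \to 0$ with $\FF$ then gives
$$H^1(\FF \otimes \II_{C/X}) \;\cong\; \coker\bigl(H^0(\FF) \to H^0(\FF|_C)\bigr), \qquad H^2(\FF \otimes \II_{C/X}) \;\cong\; H^1(\FF|_C),$$
and Riemann-Roch on $C$ gives $\chi(\FF|_C) = 2H \cdot C + 2(1 - 10) = 24 - 18 = 6 = h^0(\FF)$. Thus both vanishings collapse to the single condition that $H^0(\FF) \to H^0(\FF|_C)$ is an isomorphism of six-dimensional spaces, equivalently that no nonzero global section of $\FF$ vanishes identically along $C$.

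The main obstacle, to be treated as in the earlier sections of this appendix, is to exhibit one triple over a finite prime field $\mathbb{F}_p$ where this isomorphism is witnessed. I would produce it by combining the two existing constructions: generate a smooth cubic $X$ together with a quintic elliptic curve $E$ via the pfaffian construction used in the proof of Theorem \ref{elliptic}, and then on the same $X$ generate an ACM curve $C$ of degree $12$ and genus $10$ using a random point of the $24$-dimensional Hilbert scheme of such curves, for instance by adapting the unirational recipe of Theorem \ref{main} so as to produce $C$ contained in a prescribed cubic. After verifying smoothness and the disjointness $E \cap C = \emptyset$ computationally, form $\FF$ via the Serre correspondence and compute the matrix of the restriction map $H^0(\FF) \to H^0(\FF|_C)$, confirming that it has full rank $6$. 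Because all the conditions involved are open in flat families, a single successful \texttt{Macaulay2} verification over $\mathbb{F}_p$ together with semicontinuity yields a nonempty open subset $U$ defined over $\mathbb{Q}$, and hence over any algebraically closed ground field of characteristic zero, concluding the proof.
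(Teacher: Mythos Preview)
Your cohomological reductions are correct and in fact cleaner than what the paper does: the identification of $\Ext^1(\II_{E/X}(2),\OO_X)$ with $H^0(\omega_E)$ is exactly right, and reducing both target vanishings to the single statement that $H^0(\FF)\to H^0(\FF|_C)$ is an isomorphism of six-dimensional spaces is a nice simplification the paper does not make (it simply asks \texttt{Macaulay2} to compute $H^1$ and $H^2$ directly).

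The gap is in your construction step. You write that, having built $X$ together with $E$ by the pfaffian recipe, you will then ``generate an ACM curve $C$ of degree $12$ and genus $10$ \ldots\ by adapting the unirational recipe of Theorem \ref{main} so as to produce $C$ contained in a prescribed cubic.'' This is precisely the difficult point, and you have not addressed it. The unirational construction of Theorem \ref{main} builds $C$ first, in $\PP^1\times\PP^2$, and only afterwards embeds it in $\PP^4$; the cubics through the resulting curve form a $\PP^7$ inside $\PP^{34}$, and there is no mechanism in that construction for forcing $C$ to land on a cubic chosen in advance. The paper is explicit that this is the bottleneck: for a general pair $(E,C)$ the $20$-dimensional space $H^0(\II_E(3))$ and the $8$-dimensional space $H^0(\II_C(3))$ meet trivially in the $35$-dimensional $H^0(\OO_{\PP^4}(3))$, so the locus of pairs lying on a common cubic has expected codimension $8$.

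The paper's solution is to go in the opposite order from yours: fix a random $C$ first, then repeatedly draw random elliptic normal curves $E$ and test whether $H^0(\II_C(3))\cap H^0(\II_E(3))\neq 0$. Over $\mathbb F_p$ the success probability is heuristically $p^{-8}$, so with $p=5$ one expects on the order of $5^8\approx 4\cdot 10^5$ trials; the paper reports finding a hit after roughly that many attempts, then checks smoothness of the common cubic, the normal-bundle vanishings, disjointness, and finally the cohomology of $\FF\otimes\II_{C/X}$. Your proposal would be complete if you replaced the hand-wave ``adapting the unirational recipe'' with this random search, or with any other concrete method for producing the pair on a common cubic.
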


\begin{proof} Again, our strategy is to construct a triple $C,E \subset X$ over a finite field with the help of \textit{Macaulay2} and then establishing the theorem in characteristic $0$ with semi-continuity. \\
The bottleneck of this approach is to construct $E$ and $C$ such
that there is a cubic threefold which contains both curves.
Since $H^0(\mathcal{O}_{\PP^4}(3))$ is $35$-dimensional, for a general pair $(E,C)$ the $20$-dimensional subspace $W_E=H^0(\mathcal{I}_{E/\PP^4}(3))$ and the $8$-dimensional subspace $W_C=H^0(\mathcal{I}_{E/\PP^4}(3))$ will have a trivial intersection. More precisely, the locus $M$ of pairs $(E,C)$ with $W_E \cap W_C \neq 0$ has expected codimension $8$ in $H=H_1 \times H_2 \subset \mathrm{Hilb}_{5t}(\PP^4) \times \mathrm{Hilb}_{12t-9}(\PP^4)$ where $H_1$ is the subscheme whose points correspond to smooth elliptic normal curves and $H_2$ the subscheme whose points correspond to smooth ACM curves. \\
One way to find points in $M$ is by searching over a small finite
field. Heuristically, the probability for a random point $(E,C)
\in H( \mathbb{F}_p)$ to lie in $M(\mathbb{F}_p)$ is
$$ \frac{\# M(\mathbb{F}_p)}{\# H(\mathbb{F}_p)} \approx \frac{1}{p^{8}}.$$
From the Weil formula we see that this approximation is
asymptotically correct as $p\rightarrow \infty$. Practice shows
that it is a reasonable heuristic even for small $p$ in many
cases. However, over very small fields ($p=2,3$) most
hypersurfaces are singular, see \cite{BS}. Hence we must not
choose $p$ too small in order to minimize the total runtime.
Empirically, $p=5$ is a good choice. Turning to the construction,
we start with a random smooth arithmetically Cohen-Macaulay curve
$C$ of genus $10$ and degree $12$ in $\PP^4$. To keep things clear
we capsulated the construction of the preceding section in a
function that returns the vanishing ideal of such a curve:
~\\
{\small
\begin{verbatim}
     i1 : load"UlrichBundlesOnCubicThreefolds.m2";
          Fp=ZZ/5;
          T=Fp[z_0..z_4];
          setRandomSeed("gamma");

     i2 : time IC=randomCurveGenus10Degree12(T);
             -- used 32.4096 seconds
\end{verbatim}}
For the sake of replicability we write down the curve used in our
example. To do this is in a space-saving way, we write down the
$9\times8$ matrix $m_C$ with linear entries in the free resolution
of $I_C$. From $m_C$ the curve can easily be regained: {\small
\begin{verbatim}
     i3 : mC= transpose((res IC).dd_2);

     i4 : IC==ideal syz mC
            -- regain the curve from mC

     o4 : true
\end{verbatim}}~\\
In our example, we have {\tiny
\begin{flushleft}
$m_C=\left(\begin{array}{cccc}
    {-2 {z}_{3}}&
      2 {z}_{1}-2 {z}_{3}-{z}_{4}&
      -2 {z}_{0}-{z}_{1}-2 {z}_{3}+2 {z}_{4}&
      {z}_{0}+{z}_{3}-2 {z}_{4}\\
      2 {z}_{1}-{z}_{2}+{z}_{3}-2 {z}_{4}&
      -2 {z}_{0}+2 {z}_{1}+2 {z}_{4}&
      -2 {z}_{0}+2 {z}_{4}&
      {z}_{3}-{z}_{4}\\
      -2 {z}_{1}+{z}_{3}&
      2 {z}_{0}-{z}_{1}-{z}_{2}-2 {z}_{3}&
      {z}_{0}+2 {z}_{3}+{z}_{4}&
      2 {z}_{3}-{z}_{4}\\
      -{z}_{1}-{z}_{3}-{z}_{4}&
      {z}_{0}-{z}_{1}-2 {z}_{3}&
      {z}_{0}-{z}_{3}-{z}_{4}&
      -2 {z}_{3}+2 {z}_{4}\\
      2 {z}_{3}&
      2 {z}_{1}+{z}_{3}+{z}_{4}&
      -2 {z}_{0}-{z}_{2}-2 {z}_{4}&
      {z}_{3}-{z}_{4}\\
      -{z}_{1}-{z}_{3}+2 {z}_{4}&
      {z}_{0}+{z}_{3}+2 {z}_{4}&
      2 {z}_{3}&
      {-2 {z}_{3}}\\
      {z}_{1}+2 {z}_{3}&
      -{z}_{0}-2 {z}_{1}+2 {z}_{3}+{z}_{4}&
      2 {z}_{0}-2 {z}_{3}-2 {z}_{4}&
      {-{z}_{2}}\\
      2 {z}_{1}+2 {z}_{3}-{z}_{4}&
      -2 {z}_{0}-2 {z}_{1}-2 {z}_{4}&
      2 {z}_{0}+2 {z}_{3}-{z}_{4}&
      {z}_{4}\\
      {z}_{1}-2 {z}_{3}&
      -{z}_{0}-{z}_{1}+{z}_{3}-{z}_{4}&
      {z}_{0}-{z}_{3}-2 {z}_{4}&
      -{z}_{3}+2 {z}_{4}\\     \end{array} \right.$
      \end{flushleft}
\begin{flushright}
$\left.\begin{array}{cccc}
 {-2 {z}_{3}}&
      {z}_{2}-2 {z}_{3}-{z}_{4}&
      {-2 {z}_{4}}&
      -{z}_{2}+{z}_{3}-{z}_{4}\\
      {z}_{0}-{z}_{3}&
      2 {z}_{2}+2 {z}_{3}-{z}_{4}&
      2 {z}_{2}-{z}_{3}+2 {z}_{4}&
      -{z}_{2}+{z}_{3}-2 {z}_{4}\\
      {z}_{1}+2 {z}_{4}&
      -2 {z}_{2}-2 {z}_{3}-{z}_{4}&
      -{z}_{2}-{z}_{3}-2 {z}_{4}&
      2 {z}_{2}+2 {z}_{3}\\
      -{z}_{1}-{z}_{3}&
      {z}_{0}+2 {z}_{2}+{z}_{3}+2 {z}_{4}&
      {z}_{3}+2 {z}_{4}&
      {z}_{2}+2 {z}_{3}+2 {z}_{4}\\
      {z}_{3}&
      {z}_{1}+2 {z}_{2}+{z}_{3}&
      -2 {z}_{3}-{z}_{4}&
      {z}_{3}+{z}_{4}\\
      -{z}_{3}+2 {z}_{4}&
      -{z}_{1}-2 {z}_{2}+2 {z}_{3}-{z}_{4}&
      {z}_{0}-{z}_{4}&
      2 {z}_{3}-2 {z}_{4}\\
      0&
      {z}_{2}+{z}_{3}&
      {z}_{1}+{z}_{2}+2 {z}_{3}-{z}_{4}&
      -{z}_{2}-{z}_{3}+2 {z}_{4}\\
      -{z}_{2}-{z}_{3}-2 {z}_{4}&
      2 {z}_{2}&
      -{z}_{2}+{z}_{3}+{z}_{4}&
      {z}_{0}-2 {z}_{2}+2 {z}_{3}+2 {z}_{4}\\
      -{z}_{3}-{z}_{4}&
      {z}_{2}-{z}_{3}&
      {z}_{3}-2 {z}_{4}&
      {z}_{1}-{z}_{2}+{z}_{4}\\
       \end{array} \right).$
\end{flushright}}
In the next step we search for an elliptic normal curve $E$ such
that $C$ and $E$ lie on a common cubic threefold $X$. Picking $E$
at random and checking whether there is a relation between the
generators of $H^0(\mathcal{I}_{C/\mathbb{P}^4}(3))$ and
$H^0(\mathcal{I}_{E/\mathbb{P}^4}(3))$ takes about $0.01$ seconds
a time on a $2.4\ \mathrm{GHz}$ processor. Hence we expect to find
a such an $E$ within a span of about one hour. \pagebreak {\small
\begin{verbatim}
     i5 : getEllipticWithCommonThreefold=(IC)->(
             max3:=ideal basis(3,T);
                -- third power of the maximal ideal
             for attemptsHS from 1 do (
                mEtmp:=random(T^5,T^{5:-1});
                mE:=mEtmp-transpose mEtmp;
                   -- the 5x5 skew-symmetric matrix
                IE:=pfaffians(4,mE);
                   -- the elliptic curve E
                if rank source gens intersect(IE+IC,max3)<28 then (
                   rltn:=(syz(gens IC|gens intersect(IE,max3)))_{0};
                      -- the relation between the generators
                   X:=ideal (gens IC*rltn^{0..7});
                      -- the cubic threefold
                   <<"attempts hypersurface = "<<attemptsHS;
                      -- print number of attempts
                   return(mE,X))))
\end{verbatim}}
We also have to check that the cubic hypersurface $X$ is smooth
and that the twisted normal bundles $\mathcal{N}_{E/X}(-1)$ has no
global sections, as expected. {\small
\begin{verbatim}
     i6 : normalSheaf=(I,X)->(
             I2:=saturate(I^2+X);
             cNIX:=image gens I/ image gens I2;
             sheaf Hom(cNIX,(ring I)^1/I))

     i7 : sectionsTwistedNormalBundle=(mE,X)->(
             IE:=pfaffians(4,mE);
             NEX:=normalSheaf(IE,X);
             HH^0(NEX(-1)))

\end{verbatim}}

Recall from \cite{Eis}, that $\mathcal{O}_E$ has an eventually
3-periodic free resolution as an $\mathcal{O}_X$-module
\begin{eqnarray*}
\ldots \xrightarrow{q} \mathcal{O}_X(-6)^6 \xrightarrow{m}
\mathcal{O}_X(-5)^6 \xrightarrow{q} \mathcal{O}_X(-3)^6
 \to \mathcal{O}_X(-2)^5  \to \mathcal{O}_X \to \mathcal{O}_E \to 0 \label{resElliptic}
 \end{eqnarray*}
 whose  higher syzygy modules are independent of choice of the  section $s \in H^0(\mathcal{F})$  defining $E$.
Thus the number of sections
$$ \mathcal{N}_{E/X}(-1) \cong \mathcal{I}_{E/X}/\mathcal{I}_{E/X}^2(1) \cong \mathcal{F} \otimes
\mathcal{O}_E(-1)$$ depends only on $\mathcal{F}$ but not on $s\in
H^0(\mathcal{F})$: Tensoring the perodic resolution with
$\mathcal{F}(-1)$ and the fact that $\mathcal{F}$ has no
intermediate cohomology yields
$$ H^0(\mathcal{N}_{E/X}(-1)) \cong \ker( H^3( \mathcal{F}(-1)) \otimes \mathcal{K}_3)\to H^3( \mathcal{F}^6(-4)),$$
and $\mathcal{K}_3 =\ker ( \mathcal{O}_X(-3)^6 \to
\mathcal{O}_X(-2)^5) \cong \im (\mathcal{O}_X(-4)^6
\xrightarrow{q} \mathcal{O}_X(-3)^6)$ is independent of $E$. So if
$H^0(\mathcal{N}_{E/X}(-1))= 0$ then for any other elliptic curve
$E'$ corresponding to a global section of $\mathcal{F}$ the
cohomology $H^0(\mathcal{N}_{E'/X}(-1))$ is also  vanishing.
Putting everything together, we have the following search routine:
{\small
\begin{verbatim}
     i8 : time for attemptsN from 1 do (
             time for attemptsS from 1 do (
                time (mE,X)=getEllipticWithCommonThreefold(IC);
                if isSmooth X then (
                   <<"attempts smooth = "<<attemptsS;
                   break));
             if sectionsTwistedNormalBundle(mE,X)==0 then (
                <<"attempts normalbundle = "<<attemptsN;
                break));

         -- the output:
         attempts hypersurface = 25831     -- used 221.619 seconds
         attempts hypersurface = 206719     -- used 1825.24 seconds
         attempts hypersurface = 132506     -- used 1154.79 seconds
         attempts smooth = 3     -- used 3201.66 seconds
         attempts normalbundle = 1     -- used 3202.02 seconds
\end{verbatim}}

The extension is given as the cokernel of $m$ which is accessable
through the resolution of $\mathcal{O}_E$: {\small
\begin{verbatim}
     i9 : m0=sub((res sub(pfaffians(4,mE),T/X)).dd_4,T);
            -- the matrix in the resolution
          baseChange=Hom(coker m0,coker transpose m0);
          b=map(T^6,T^6,homomorphism baseChange_{0});
            -- we compute a skewsymmetrization of m0
          m=b*m0;
          pfaffians(6,m)==X

     o9 = true
\end{verbatim}}
In our example, we have

{\tiny $ m = \left(\begin{array}{ccc}
  0&-2 {z}_{1}-{z}_{2}+2 {z}_{4}&
      -2 {z}_{3}-{z}_{4}\\
      2 {z}_{1}+{z}_{2}-2 {z}_{4}&
      0&
      {z}_{0}+2 {z}_{2}-2 {z}_{3}+{z}_{4}\\
      2 {z}_{3}+{z}_{4}&
      -{z}_{0}-2 {z}_{2}+2 {z}_{3}-{z}_{4}&
      0\\
      2 {z}_{2}-2 {z}_{3}-2 {z}_{4}&
      -2 {z}_{0}+2 {z}_{1}-{z}_{2}+2 {z}_{3}-2 {z}_{4}&
      -2 {z}_{2}+2 {z}_{3}+{z}_{4}\\
      -{z}_{2}+{z}_{3}+{z}_{4}&
      2 {z}_{0}-{z}_{2}-{z}_{3}-2 {z}_{4}&
      {z}_{0}+2 {z}_{2}-{z}_{4}\\
      -2 {z}_{0}+{z}_{2}-{z}_{4}&
      {z}_{0}+2 {z}_{1}+{z}_{2}+2 {z}_{3}+2 {z}_{4}&
      -{z}_{1}-{z}_{2}+2 {z}_{3}+2 {z}_{4}\\
\end{array} \right. $
\begin{flushright}
$\left. \begin{array}{ccc} -2 {z}_{2}+2 {z}_{3}+2 {z}_{4}&
      {z}_{2}-{z}_{3}-{z}_{4}&
      2 {z}_{0}-{z}_{2}+{z}_{4}\\
      2 {z}_{0}-2 {z}_{1}+{z}_{2}-2 {z}_{3}+2 {z}_{4}&
      -2 {z}_{0}+{z}_{2}+{z}_{3}+2 {z}_{4}&
      -{z}_{0}-2 {z}_{1}-{z}_{2}-2 {z}_{3}-2 {z}_{4}\\
      2 {z}_{2}-2 {z}_{3}-{z}_{4}&
      -{z}_{0}-2 {z}_{2}+{z}_{4}&
      {z}_{1}+{z}_{2}-2 {z}_{3}-2 {z}_{4}\\
      0&
      2 {z}_{1}-{z}_{2}-{z}_{3}-2 {z}_{4}&
      -{z}_{0}+{z}_{1}+{z}_{2}+2 {z}_{4}\\
      -2 {z}_{1}+{z}_{2}+{z}_{3}+2 {z}_{4}&
      0&
      -{z}_{0}+{z}_{1}-2 {z}_{2}\\
      {z}_{0}-{z}_{1}-{z}_{2}-2 {z}_{4}&
      {z}_{0}-{z}_{1}+2 {z}_{2}&
      0\\
\end{array}\right).$
\end{flushright}}

A smooth random section $E'$ of the bundle $\mathcal{F}$ can also
be obtained very easily:

{\small
\begin{verbatim}
     i10 : IE'=for i from 1 do (
                  b:=random(T^6,T^6);
                  m':=b*m*transpose b;
                  IE':=pfaffians(4,m'_{0..4}^{0..4});
                  if isSmooth IE' and dim(IC+IE')==0 then break(IE'));
\end{verbatim}}

In order to check that $C$ and $E'$ are smooth points in
$\mathrm{Hilb}_{12t-9}(X)$ and $\mathrm{Hilb}_{5t}(X)$,
respectively, we compute the cohomology groups of the normal
sheaves: {\small
\begin{verbatim}
     i11 : NE'X=normalSheaf(IE',X);
           HH^0(NE'X)==Fp^10 and HH^1(NE'X)==0

     o11 = true

     i12 : NCX=normalSheaf(IC,X);
           HH^0(NCX)==Fp^24 and HH^1(NCX)==0

     o12 = true
\end{verbatim}}

Finally, we compute the cohomology groups of $\mathcal{F}\otimes
\mathcal{I}_{C/X}$: {\small
\begin{verbatim}
     i13 : M=coker sub(m,T/X);
              -- this is a module whose sheafification is an extension
           sheafMIC=sheaf(M)**sheaf(module sub(IC,T/X));
           HH^1 sheafMIC==0 and HH^2 sheafMIC==0

     o13 = true
\end{verbatim}}
\end{proof}

\end{document}